\documentclass[sort&compress,3p]{elsarticle}

\usepackage{algorithm}
\usepackage{algpseudocode}
\usepackage{amsmath}
\usepackage{amssymb}
\usepackage{amsthm}
\usepackage{array}
\usepackage{booktabs}
\usepackage{breqn}
\setkeys{breqn}{breakdepth={1}}
\usepackage{changepage}
\usepackage{collcell}
\usepackage{color}
\usepackage{diagbox}
\usepackage{enumerate}
\usepackage{enumitem}
\usepackage{epstopdf}
\usepackage{float}
\usepackage[T1]{fontenc}
\usepackage{geometry}
\usepackage{graphics}
\usepackage{graphicx}
\usepackage{hyperref}
\usepackage[labelfont=bf,labelsep=space]{caption}
\usepackage{latexsym}
\usepackage{longtable}
\usepackage{lscape}
\usepackage{mathtools}
\usepackage{mdframed}
\usepackage{multicol}
\usepackage{pdfpages}
\usepackage{pgf}
\usepackage{pifont}
\usepackage{placeins}
\usepackage{siunitx}
\usepackage{soul}
\usepackage{subcaption}
\usepackage{tabularx}
\usepackage{tikz}
\usetikzlibrary{shapes}
\usepackage{todonotes}
\usepackage{wasysym}
\usepackage{ulem}
\usepackage[utf8]{inputenc}
\usepackage{xcolor}
\usepackage{xspace}
\usepackage{cleveref}

%-----------------Define a new Float Environment ----------------------
% Defining a new float environment for the formulations
% Note: *hyperref* package needs to be loaded after the following
\usepackage{newfloat}
\usepackage{caption}
\DeclareFloatingEnvironment[fileext=frm,placement={!htb},name=Formulation]{formulation}
\DeclareFloatingEnvironment[fileext=mcoeff,placement={!htb},name=Method]{method}

\captionsetup[formulation]{labelfont=bf,labelsep=colon}
\captionsetup[method]{labelfont=bf,labelsep=colon}
\captionsetup[figure]{labelfont=bf,labelsep=colon}
\captionsetup[table]{labelfont=bf,labelsep=colon}

%-----------------------------------------------------------------------

% ----------------- Theorem, Def, Remark definitions -----------

\newtheorem{theorem}{Theorem}[section]
\newtheorem{lemma}[theorem]{Lemma}
\newtheorem{corollary}{Corollary}[theorem]

\theoremstyle{remark}
\newtheorem{remark}{Remark}[section]

\theoremstyle{definition}
\newtheorem{definition}{Definition}[section]

\theoremstyle{example}
\newtheorem{example}{Example}[section]

%---------------------------------------------------------------

%----------------------- Norm and Abs value definitions ----------
\DeclarePairedDelimiter\abs{\lvert}{\rvert}%
\DeclarePairedDelimiter\norm{\lVert}{\rVert}%

% Swap the definition of \abs* and \norm*, so that \abs
% and \norm resizes the size of the brackets, and the
% starred version does not.
\makeatletter
\let\oldabs\abs
\def\abs{\@ifstar{\oldabs}{\oldabs*}}
\let\oldnorm\norm
\def\norm{\@ifstar{\oldnorm}{\oldnorm*}}
\makeatother

%-----Custom Commands -------------------------------

\newcommand{\fone}{f^{\{1\}}}
\newcommand{\ftwo}{f^{\{2\}}}
\newcommand{\Lb}{\mathbf{L}}
\newcommand{\Lone}{\Lb^{\{1\}}}
\newcommand{\Ltwo}{\Lb^{\{2\}}}
\newcommand{\Jb}{\mathbf{J}}
\newcommand{\Jone}{\Jb^{\{1\}}}
\newcommand{\Jtwo}{\Jb^{\{2\}}}
\newcommand{\N}{\mathcal{N}}
\renewcommand{\P}{\mathcal{P}}
\newcommand{\None}{\N^{\{1\}}}
\newcommand{\Ntwo}{\N^{\{2\}}}
\newcommand{\Wb}{\mathbf{W}}
\newcommand{\Wone}{\Wb^{\{1\}}}
\newcommand{\Wtwo}{\Wb^{\{2\}}}
\newcommand{\R}{\mathbb{R}}
\renewcommand{\Re}{\mathbb{R}}

\newcommand{\J}{\mathbf{J}}
\newcommand{\Mb}{\mathbf{M}}
\newcommand{\Pn}{P}

\newcommand{\Jn}{\mathbf{J}_{n}}
\newcommand{\pfrac}[2]{\frac{\partial #1}{\partial #2}}

\newcommand{\Tree}{\mathbb{T}}
\newcommand{\RemoveRoot}{\mathbb{R}}

% for emboldening vectors as needed by Springer Journals
\newcommand{\mbf}[1]{\mathbf{#1}}

% Define a new Tikz command to point out terms in the equation
% From https://tex.stackexchange.com/a/306906/83253
\newcommand\arrd[1]{
    \tikz[baseline, inner xsep=-1cm]{
        \draw[->] (0,.4) -- (0,-.2) -- (-.2,-.4) node[below, font=\tiny] {#1};
    }\ % for spacing
}

\newcommand\arru[1]{
	\tikz[baseline, inner xsep=-4cm]{
		\draw[->] (0,.4) -- (0,-.1) -- (.2,-.2) node[below, font=\tiny] {#1};
    }\ % for spacing
}

%-------------------------------------------------------

%-------------Custom Defines -------------------------

%----------------------------------------------

%------------ Custom Column Types ----------------------

% New column type to remove vertical padding in longtable
% REf: http://tex.stackexchange.com/questions/285052/how-to-remove-cell-padding-in-longtable-environment
\newcolumntype{P}[1]{>{\centering\arraybackslash\vspace{-1ex}}m{#1}<{\vspace*{-1ex}}}

%---------------------------------------------------------

\newif\iflong
% comment out next line for shorter version of the paper
\longtrue

%---------------------------------------------------------

\title{Partitioned Exponential Methods for Coupled Multiphysics Systems}
\author[aff1,em1]{Mahesh Narayanamurthi}
\ead{maheshnm@vt.edu}
\author[aff1,em2]{Adrian Sandu\corref{cor1}}
\ead{sandu@cs.vt.edu}
\address[aff1]{Computational Science Laboratory, Department of Computer Science, Virginia Tech, Blacksburg, VA 24060}
\address[em1]{E-mail: maheshnm@vt.edu}
\address[em2]{E-mail: sandu@vt.edu}
\cortext[cor1]{Corresponding author}

\begin{document}
\iflong
\includepdf[landscape=false,pages=-]{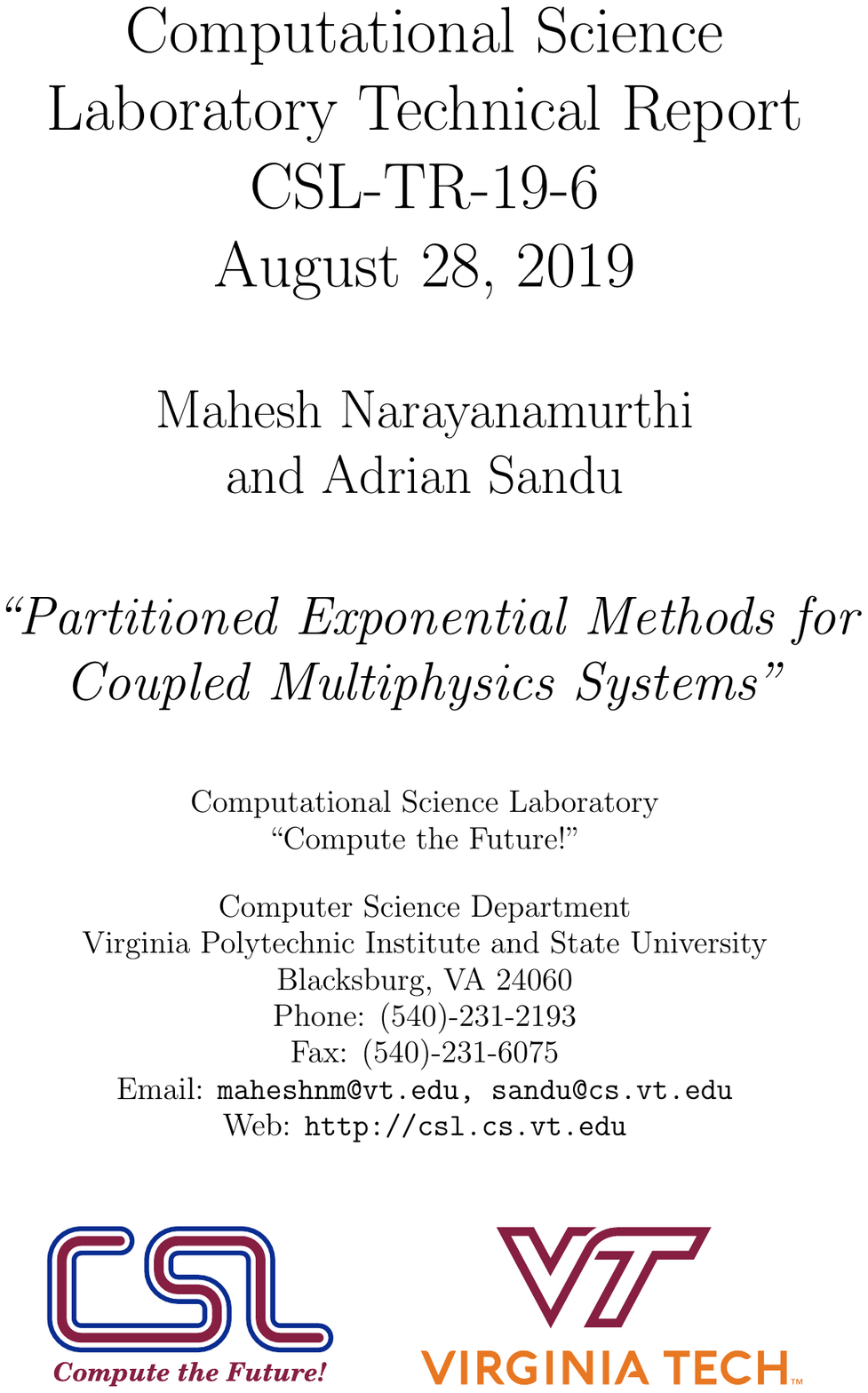}
\fi

\begin{abstract}
Multiphysics problems involving two or more coupled physical phenomena are ubiquitous in science and engineering. This work develops a new partitioned exponential approach for the time integration of multiphysics problems. After a possible semi-discretization in space, the class of problems under consideration is modeled by a system of ordinary differential equations where  the right-hand side is a summation of two component functions, each corresponding to a given set of physical processes. 

The partitioned-exponential methods  proposed herein evolve each component of the system via an exponential integrator, and information between partitions is exchanged via coupling terms. The traditional approach to constructing exponential methods, based on the variation-of-constants formula, is not directly applicable to partitioned systems. Rather, our approach to developing new partitioned-exponential families is based on a general-structure additive formulation of the schemes. Two method formulations are considered, one based on a linear-nonlinear splitting of the right hand component functions, and another based on approximate Jacobians. The paper develops classical (non-stiff) order conditions theory for partitioned exponential schemes based on particular families of T-trees and B-series theory. Several practical methods of third order are constructed that extend the Rosenbrock-type and EPIRK families of exponential integrators. Several implementation optimizations specific to the application of these methods to reaction-diffusion systems are also discussed. Numerical experiments reveal that the new partitioned-exponential methods can perform better than traditional unpartitioned exponential methods on some problems.

\par\noindent
{\bf Keywords.} Multiphysics systems, exponential time integration, Butcher series, partitioned methods.
% \subclass{65L05, 65L04, 65F60, 65M22, 65Y05}
\end{abstract}

\maketitle

\iflong
\tableofcontents
\fi

\newpage
%%%%%%%%%%%%%%%%%%%%%%%%%%%%%%%%%%%%%%%
% !TEX root = Nonstiff_pexpw.tex
%%%%%%%%%%%%%%%%%%%%%%%%%%%%%%%%%%%%%%%
\section{Introduction}
\label{sec:introduction}
%%%%%%%%%%%%%%%%%%%%%%%%%%%%%%%%%%%%%%%
Multiphysics problems involve two or more coupled physical phenomena that take place simultaneously in both space and time. After semi-discretization in space, multiphysics problems are modeled as a system of ordinary differential equations where the time tendency (the ``right-hand side function'') is additively partitioned, with each component modeling a different physical phenomenon. 
%In reaction-diffusion problems formulated as:
%\begin{eqnarray*}
%	\pfrac{u}{t} = d_1(u) + r_1(u, v),\\
%	\pfrac{v}{t} = d_2(v) + r_2(u, v),
%\end{eqnarray*}
%%
%$u$, $v$ are the concentrations of the reactants that vary in both space and time; $d_1(u)$, $d_2(v)$ are pieces of the full right-hand side function that correspond to diffusion of the reactants; and $r_1(u, v)$, $r_2(u, v)$ correspond to reactions between the reactants. 
Physical phenomena may have different dynamical characteristics, i.e., some can be stiff and some non-stiff.
%
% measure, where we loosely define stiffness as a property of the terms of the differential equation that forces explicit time integration methods to take very small timesteps. 
For example, in atmospheric composition problems, one has non-stiff advection, mildly stiff turbulent diffusion, and stiff chemical and aerosol processes \cite{Sandu_2003_aerosolFramework,Sandu_2004_multiScale}.

Multi-methods for time integration, such as implicit-explicit (IMEX) schemes \cite{Ascher_1995_IMEX,Ascher_1997_IMEX_RK,Verwer_2004_IMEX_RKC,Sandu_2010_extrapolatedIMEX,Sandu_2012_ICCS-IMEX,Sandu_2014_IMEX_GLM_Extrap,Sandu_2014_IMEX-GLM,Sandu_2014_IMEX-RK,Sandu_2015_IMEX-TSRK,Sandu_2015_Stable_IMEX-GLM,Sandu_2016_highOrderIMEX-GLM}, take advantage of this structure. The stiff term (e.g., diffusion) is handled by the implicit method and the non-stiff term (e.g., reaction) by the explicit method. In a two-partition system, if both terms are stiff, IMEX methods can be inefficient; the explicit method may be stability bound and suffer from timestep restriction. Some authors have used Implicit-Implicit methods \cite{belytschko1979,Zienkiewicz1988,Farhat1991,Dettmer2012,Sandu_2016_GARK-MR} to solve such problems, where both partitions are treated implicitly.  Implicit-Implicit methods require the two linear or non-linear systems be solved in a staggered manner with one partition going first, and extrapolating the other partition. Extrapolation can result in a method that is less stable compared to solving the unpartitioned problem with an implicit method \cite{belytschko1979}. Alternatively, solving the unpartitioned problem with an implicit method cannot draw benefits from the structure of the two individual partitions, which can be lost when treating them as one whole.  If a pre-conditioner is available for the full right-hand side or for each of the partitions, Implicit and Implicit-Implicit methods can be very efficient. However, if either partition is non-linear, obtaining a pre-conditioner can be a non-trivial task in itself.

Exponential integrators have proven to be more stable than explicit integrators, and computationally cheaper than implicit integrators for some problems \cite{loffeld2013,Sandu_2018_EPIRK-adjoint}. Although they were first developed several decades ago \cite{hersch1958,certaine1960,lawson1967,ehle1975}, research on exponential integrators faded due to the lack of efficient means to compute matrix-exponential-like operations. Hochbruck et al. \cite{Hochbruck_1997_exp,Hochbruck_1998_exp} revived interest in the field by building adaptive timestep methods and efficiently evaluating matrix-exponential-like functions using Krylov subspace approximations. Since then numerous authors have contributed to the field as summarized in these review articles \cite{minchev2005,Hochbruck_2010_exp}.

Exponential integrators for split/partitioned systems have recently enjoyed considerable attention. Implicit-Exponential integrators like IMEXP \cite{luan2016} and IIF \cite{nie2006, chou2007, zhao2011} treat one of the partitions implicitly and the other partition exponentially. In the context of reaction-diffusion equations,  IMEXP methods have been used to treat the linear operator corresponding to diffusion implicitly and the non-linear reaction exponentially \cite{luan2016}. IMEXP admits the use of preconditioners for solving the Laplacian operator. IIF schemes, on the other hand, have been used to treat the diffusion term exponentially and the reaction term implicitly  \cite{nie2006, chou2007}. They have been structured so that the implicit solve involving the nonlinear reaction terms is dealt with independent of the exponential treatment of the diffusion term. Therefore, the implicit solves are performed only on nonlinear equations local to each grid-point \cite{chou2007}. For convection-diffusion problems Celledoni et al. \cite{celledoni2009} introduce semi-Lagrangian methods that treat the convective part exactly and the diffusive part implicitly. Structure preserving exponential methods have been studied in \cite{bhatt2017} for ODEs perturbed by a linear, damping term that can be time-dependent. There the linear operator corresponding to the linear term is scalar and Lawson transformation \cite{lawson1967} is used to rewrite the ODE. A variety of RK methods is applied to the modified ODE to construct new methods. This procedure to construct Exponential Runge-Kutta (ERK) and Partitioned Exponential Runge-Kutta (PERK) schemes preserve some desirable properties of the exact solution as elucidated further in the article \cite{bhatt2017}.  Tranquilli and Sandu \cite{Sandu_2014_expK,Sandu_2014_ROK} apply matrix exponential only in a Krylov subspace, and integrate the remaining dynamics explicitly. Flexible Exponential Integrators (FEI) are developed in \cite{li2015}, and seek to generalize Exponential-Rosenbrock \cite{Hochbruck_2009_exp,luan2014a,Hochbruck_1998_exp} and Exponential Runge--Kutta \cite{Hochbruck_2005_expRK} methods. They split the right-hand side into stiff and non-stiff terms. They then perform a continuous linearization of the nonstiff term about the current solution, and arrive at a formulation that admits different combinations of exponential-like matrix functions to act on the stiff and non-stiff remainder terms.

This work develops partitioned exponential methods where each component of the system is discretized with a (different) exponential integrator. Our exponential-exponential approach distinguishes this work from previous exponential-implicit, exponential-explicit, or exponential-Lagrangian schemes. We build
%% ------------ Removing the text below (AVG) --------------------%%
% split and 
%% ---------------------------------------------------------------%% 
W-type partitioned-exponential methods by casting unpartitioned methods from EPIRK \cite{Tokman_2006_EPI, Tokman_2011_EPIRK} and EXP \cite{Hochbruck_1997_exp,Hochbruck_1998_exp} families into a structure-revealing, GARK-like \cite{Sandu_2015_GARK,Sandu_2016_GARK-MR} framework. W-methods were first introduced in \cite{steihaug1979} to build implicit time integration methods that admit Jacobian approximations in the method formulation.
%% ------------ Removing the text below (AVG) --------------------%%
% We also discuss a family of methods constructed by averaging two unpartitioned sEPIRK \cite{Rainwater_2014_semilinear} methods.
%% ---------------------------------------------------------------%%

In section \ref{sec:partitioned-problems}, we discuss the partitioned problem formulation and the construction of partitioned methods using the variation-of-constants formula. We highlight the difficulty of building methods via the variation-of-constants approach and consider some strategies to work around the restriction. Section \ref{sec:pexpw} presents alternate formulations of partitioned-exponential methods using the GARK framework.
%% ------------ Removing the text below (AVG) --------------------%%
% and by averaging two unpartitioned exponential methods. 
%% ---------------------------------------------------------------%%
The structure of trees and derivatives, and B-series theory for the formulations presented in section \ref{sec:pexpw} are discussed in section \ref{sec:order}. Section \ref{sec:construction} delves briefly into the construction of three stage third order methods. Implementation issues and various computational optimizations specific to reaction-diffusion systems are addressed in section \ref{sec:implementation}. Numerical experiments are studied in section \ref{sec:numerics} and conclusions are drawn in section \ref{sec:conclusions}.

%%%%%%%%%%%%%%%%%%%%%%%%%%%%%%%%%%%%%%%

%%%%%%%%%%%%%%%%%%%%%%%%%%%%%%%%%%%%%%%
% !TEX root = Nonstiff_pexpw.tex
%%%%%%%%%%%%%%%%%%%%%%%%%%%%%%%%%%%%%%%
\section{Partitioned Problems and the Variation of Constants Formula}
\label{sec:partitioned-problems}
%%%%%%%%%%%%%%%%%%%%%%%%%%%%%%%%%%%%%%%

We are concerned with the numerical solution of complex differential equations \eqref{eqn:functional_splitting_ode_system} that can be split into multiple components. Splitting \cite{mclachlan2002, macnamara2017} is a general concept for partitioning a problem into its constituent pieces that are each simpler to solve than the original problem. In the context of ODEs (and PDEs), splitting can be applied in a number of ways such as: separate the right-hand side function into linear and non-linear pieces and treat each of them differently \cite{Rainwater_2014_semilinear,fornberg1999,Calvo_2001_IMEX_RK,akrivis2004}; perform dimensional splitting and integrate along one spatial dimension at a time \cite{verwer1984,nakamura2001,lie1998}; partition the right-hand side function into distinct operators and perform time evolution of the PDE one operator at a time \cite{issa1986,farago2008,sportisse2000,farago2007,karlsen1997}; partition space into non-overlapping domains and solve the PDE on each domain (in parallel) and adjust the solution at the interfaces of domains \cite{toselli2005,smith2004}.

To be specific, consider the initial value problem
\begin{equation}
	\label{eqn:functional_splitting_ode_system}
	\begin{split}
		y' &= F(y) = \sum_{m=1}^{\Pn} f^{\{m\}}(y),
		%\fone(y) + \ftwo(y), 
		\qquad y(t_0) = y_0 \in \Re^N, \qquad t \ge t_0,
	\end{split}
\end{equation}
where $F$ is the (unpartitioned) full right-hand side function, which is composed of $\Pn$ processes $f^{\{1\}}$ $\dots$ $f^{\{\Pn\}}$ acting simultaneously.  
To derive a partitioned exponential formulation, we further split each right-hand side function component as follows:
\begin{subequations}
\label{eqn:linearized_rhs_funcs}
\begin{eqnarray}
	\label{eqn:linearized_rhs_split}
		f^{\{m\}}(y) &=& \Lb^{\{m\}}\, y + \N^{\{m\}}(y), \quad m=1,\dots,\Pn, \\
%	\label{eqn:linearized_rhs_split2}
%		\ftwo(y) &= \Ltwo\, y + \Ntwo(y), \\
	\label{eqn:linearized_rhs}
		F(y) &=& \Lb\,y + \N(y),
		\quad \Lb = \sum_{m=1}^{\Pn}  \Lb^{\{m\}}, \quad \N(\cdot) = \sum_{m=1}^{\Pn}  \N^{\{m\}}(\cdot),
\end{eqnarray}
\end{subequations}
where $\Lb^{\{m\}} y$ are the linear components and $\N^{\{m\}}(y)$ are the corresponding non-linear remainders of each $f^{\{m\}}(y)$, respectively. We assume that $\Lb^{\{m\}}$ capture the stiffness in the corresponding partitions.

%%%%%%%%%%%%%%%%%%%%%%%%
\subsection{Classical exponential methods}
\label{sec:classical-exp}
%%%%%%%%%%%%%%%%%%%%%%%%

Classical exponential schemes solve the unpartitioned, linearized equation \eqref{eqn:linearized_rhs}. In this paper, we study partitioned derivatives of two families of classical exponential integrators -- Rosenbrock-style exponential methods (EXP \cite{Hochbruck_1998_exp}) and Runge--Kutta style exponential methods (EPIRK \cite{Tokman_2006_EPI,Tokman_2011_EPIRK}). 

Matrix-exponential-like functions and their products with vectors serve as major building blocks of exponential time integrators. Most commonly appearing matrix-exponential-like operators are the analytical functions, $\varphi_k$, defined as follows \cite{Tokman_2011_EPIRK}:
\begin{equation}
	\label{eqn:phi_function_definition}
	\begin{split}
		\varphi_{0}(z) = \exp(z), \qquad
		\varphi_{k}(z) &= \int_{0}^{1} e^{(1 - \theta) z} \cdot \frac{\theta^{k - 1}}{(k-1)!} \,d\theta,\qquad k \ge 1,
	\end{split}
\end{equation}
and they satisfy the recurrence relation:
\begin{equation}
	\label{eqn:phi-recurrence}
	\begin{split}
		\varphi_{k+1}(z) &= \cfrac{\varphi_k(z) - \varphi_{k}(0)}{z},\qquad \varphi_{k}(0) = 1/k!.\\
	\end{split}
\end{equation} 
The series expansion of $\varphi_k(z)$ is:
\begin{equation}
	\label{eqn:phi-series-expansion}
	\begin{split}
		\varphi_{k}(z) &= \sum_{i=0}^{\infty} \cfrac{z^i}{(k + i)!}.
	\end{split}
\end{equation} 

Rosenbrock-style exponential methods proposed in \cite{Hochbruck_1998_exp} (and denoted EXP herein) use only $\varphi_{1}$ functions in their formulation. The unpartitioned EXP method is summarized in Formulation \ref{frm:unpartitioned_exp_method}. Here, $\Jn = \pfrac{F}{y} (y_n)$ is the Jacobian of the right-hand side function; $s$ is the number of stages of the method; lastly, $\alpha$, $\gamma$ (and $\gamma_{i,j}$) and $b$ are the coefficients of the method that one has to determine. W-type EXP methods can replace the Jacobian $\Jn$ by an arbitrary matrix $\Wb$ while preserving the order of accuracy  \cite{Hochbruck_1998_exp}.

\begin{formulation}
	\begin{eqnarray*}
		k_{i} &=& \varphi_{1}(\gamma h \Jn) \left(F(u_{i}) + h \Jn \sum_{j = 1}^{i - 1} \gamma_{i,j} k_j\right), \nonumber \\
		u_{i} &=& y_n + h \sum_{j = 1}^{i - 1} \alpha_{i,j} k_j, \qquad i = 1,\hdots,s\nonumber \\
		y_{n+1} &=& y_n + h \sum_{i=1}^{s} b_i k_i.
	\end{eqnarray*}
	\caption{Standard Rosenbrock-exponential (EXP) method \cite{Hochbruck_1998_exp} applied to the unpartitioned system \eqref{eqn:functional_splitting_ode_system}.}
	\label{frm:unpartitioned_exp_method}
\end{formulation}

EPIRK (Exponential Propagation Iterative Methods of Runge--Kutta type) methods \cite{Tokman_2006_EPI,Tokman_2011_EPIRK} have a very general formulation among exponential integrators of Runge--Kutta type. Unlike the EXP method, EPIRK methods rely on $\varphi_{1}$ and higher-order $\varphi_k$ functions and their linear combinations ($\Psi$, see \eqref{eqn:psi_definition}) in their construction. In return, we can build higher-order methods with low stage count \cite{Tokman_2011_EPIRK,tokman2012}.

Consider also the sEPIRK scheme \cite{Rainwater_2014_semilinear} in Formulation \ref{frm:unpartitioned_sepirk_method}. $\Lb$ is the linear operator and $\N$ is the non-linear remainder after linearization of the right-side function, $F$, i.e., $F(y) = \Lb y + \N(y)$; $\Psi$ is the linear combination of $\varphi_k$ functions and is defined in \eqref{eqn:psi_definition}; the forward difference on the non-linear remainder, $\Delta^{(l - 1)}\N\bigl(y_n\bigr)$, is defined in equation \eqref{eqn:forward_difference_on_N}. The coefficients $a$'s, $b$'s, $g$'s, and $p$'s (embedded in $\Psi$) have to be determined to build new methods.

\begin{formulation}
	\begin{eqnarray*}
		Y_i &=& y_n +  {a}_{i,1} {\Psi}_{i,1}\bigl(h{g}_{i,1}\Lb\bigr) \,h f(y_n) + \sum_{l=2}^{i} {a}_{i,l} {\Psi}_{i,l}\bigl(h{g}_{i,l}\Lb\bigr)  h\Delta^{(l - 1)}\N\bigl(y_n\bigr), \qquad i = 1, \hdots, s - 1, \nonumber\\
		y_{n+1} &=& y_n + {b}_{1} {\Psi}_{s,1}\bigl(h{g}_{s,1}\Lb\bigr) \,h f(y_n) +  {\sum_{l=2}^{s} {b}_{l} {\Psi}_{{s},l}\bigl(h{g}_{{s},l}\Lb\bigr)  h\Delta^{(l - 1)}\N\bigl(y_n\bigr)}.
	\end{eqnarray*}
	\caption{Standard sEPIRK method \cite{Rainwater_2014_semilinear} applied to the unpartitioned system \eqref{eqn:functional_splitting_ode_system}.}
	 \label{frm:unpartitioned_sepirk_method}
\end{formulation}
The $\Psi$ functions that appear in Formulation~\ref{frm:unpartitioned_sepirk_method} are linear combinations of $\varphi_k$ functions \eqref{eqn:phi_function_definition}:
\begin{equation}
	\label{eqn:psi_definition}
	\begin{split}
		\Psi_{i,j}(z) = \Psi_{j}(z) = \sum_{k = 1}^{j} p_{j,k}\, \varphi_{k}(z).
	\end{split}
\end{equation}
The forward difference operator acting on the nonlinear remainder terms in Formulation~\ref{frm:unpartitioned_sepirk_method} arises from their interpolation and is defined using a recurrence relation:

\begin{equation}
	\label{eqn:forward_difference_on_N}
	\Delta^{(1)}\N(Y_i) = \N(Y_{i+1}) - \N(Y_i), \qquad \Delta^{(j)}\N(Y_i) = \Delta^{(j-1)}\N(Y_{i+1}) - \Delta^{(j-1)}\N(Y_i).
\end{equation}

%%%%%%%%%%%%%%%%%%%%%%%%
\subsection{A split solution approach}
%%%%%%%%%%%%%%%%%%%%%%%%

Using the variation-of-constants approach we write the solution to the ODE system \eqref{eqn:functional_splitting_ode_system} with $P=2$ processes at a future time instant, $t_n + h$, as:
\begin{equation}
	\label{eqn:variation_of_constants_approach}
	\begin{split}
		y(t_n + h) &= y(t_n)  + \varphi_1\left(h\bigl(\Lone+\Ltwo\bigr)\right)\,h\Lone\, y(t_n) + \varphi_1\left(h\bigl(\Lone+\Ltwo\bigr)\right)\,h\Ltwo\, y(t_n)\\
		&+ h \LaTeXunderbrace{\displaystyle\int_0^1 e^{\left(h\bigl(\Lone+\Ltwo\bigr)\right)(1-\theta)} \LaTeXunderbrace{\None\left(y(t_n + h\theta)\right)}_{\textnormal{interpolate}} d\theta}_{\textnormal{numerical quadrature}} + h \LaTeXunderbrace{\displaystyle\int_0^1 e^{\left(h\bigl(\Lone+\Ltwo\bigr)\right)(1-\theta)} \LaTeXunderbrace{\Ntwo\left(y(t_n + h\theta)\right)}_{\textnormal{interpolate}} d\theta}_{\textnormal{numerical quadrature}}.
	\end{split}
\end{equation}
Following the standard sEPIRK \cite{Rainwater_2014_semilinear} approach, a discretization method is obtained by using (possibly different) numerical quadratures to approximate the integrals, and interpolating (with possibly different formulas) terms involving the non-linear remainder. The additively partitioned sEPIRK method is summarized in Formulation \ref{frm:epirk_like_methods} below. 
%
%\begin{formulation}
%	\begin{equation*}
%		\begin{split}
%			Y_i &= y_n + a_{i,1} \Psi_{i,1}\left(hg_{i,1}\bigl(\Lone+\Ltwo\bigr)\right) h\Lone y_n
%			+ \hat{a}_{i,1} \hat{\Psi}_{i,1}\left(h\hat{g}_{i,1}\bigl(\Lone+\Ltwo\bigr)\right) h\Ltwo y_n \\ &+ \sum_{j=1}^{i} a_{i,j} \Psi_{i,j}\left(hg_{i,j}\bigl(\Lone+\Ltwo\bigr)\right)  h\Delta^{(j - 1)}\None\bigl(y_n\bigr)\\ &+ \sum_{j=1}^{i} \hat{a}_{i,j} \hat{\Psi}_{i,j}\left(h\hat{g}_{i,j}\bigl(\Lone+\Ltwo\bigr)\right)  h\Delta^{(j - 1)}\Ntwo\bigl(y_n\bigr), \qquad i = 1, 2, \hdots s - 1,\\
%			y_{n+1} &= y_n + b_{1} \Psi_{s,1}\left(hg_{s,1}\bigl(\Lone+\Ltwo\bigr)\right) h\Lone y_n
%			+ \hat{b}_{1} \hat{\Psi}_{s,1}\left(h\hat{g}_{s,1}\bigl(\Lone+\Ltwo\bigr)\right) h\Ltwo y_n \\ &+ \sum_{j=1}^{s} b_{j} \Psi_{s,j}\left(hg_{s,j}\bigl(\Lone+\Ltwo\bigr)\right)  h\Delta^{(j - 1)}\None\bigl(y_n\bigr)\\ &+ \sum_{j=1}^{s} \hat{b}_{j} \hat{\Psi}_{s,j}\left(h\hat{g}_{s,j}\bigl(\Lone+\Ltwo\bigr)\right)  h\Delta^{(j - 1)}\Ntwo\bigl(y_n\bigr).
%		\end{split}
%	\end{equation*}
%	\caption{Additively partitioned sEPIRK method. \sandu{use commas to separate the two indices in the subscripts here and everywhere}}
%	 \label{frm:epirk_like_methods}
%\end{formulation}
\begin{formulation}
	\begin{equation*}
		\begin{split}
			Y_i &= y_n + a_{i,1} \Psi_{i,1}\left(hg_{i,1}\Lb\right) h\Lone y_n
			+ \hat{a}_{i,1} \hat{\Psi}_{i,1}\left(h\hat{g}_{i,1}\Lb\right) h\Ltwo y_n \\ &+ \sum_{j=1}^{i} a_{i,j} \Psi_{i,j}\left(hg_{i,j}\Lb\right)  h\Delta^{(j - 1)}\None\bigl(y_n\bigr)\\ &+ \sum_{j=1}^{i} \hat{a}_{i,j} \hat{\Psi}_{i,j}\left(h\hat{g}_{i,j}\Lb\right)  h\Delta^{(j - 1)}\Ntwo\bigl(y_n\bigr), \qquad i = 1, 2, \hdots s - 1,\\
			y_{n+1} &= y_n + b_{1} \Psi_{s,1}\left(hg_{s,1}\Lb\right) h\Lone y_n
			+ \hat{b}_{1} \hat{\Psi}_{s,1}\left(h\hat{g}_{s,1}\Lb\right) h\Ltwo y_n \\ &+ \sum_{j=1}^{s} b_{j} \Psi_{s,j}\left(hg_{s,j}\Lb\right)  h\Delta^{(j - 1)}\None\bigl(y_n\bigr)\\ &+ \sum_{j=1}^{s} \hat{b}_{j} \hat{\Psi}_{s,j}\left(h\hat{g}_{s,j}\Lb\right)  h\Delta^{(j - 1)}\Ntwo\bigl(y_n\bigr).
		\end{split}
	\end{equation*}
	\caption{Additively partitioned sEPIRK method.}
	 \label{frm:epirk_like_methods}
\end{formulation}
In the formulation, $\Psi$ and $\hat{\Psi}$ are distinct linear combinations of $\varphi_k$ functions as defined in equation \eqref{eqn:psi_definition}. When the same quadrature and interpolation formulas are used for both partitions, $\hat{\Psi}_{i,j}= \Psi_{i,j}$ and $\hat{g}_{i,j}= g_{i,j}$, the original sEPIRK method \cite{Rainwater_2014_semilinear} is recovered.

The drawback of Formulation~\ref{frm:epirk_like_methods} is that it does not fully benefit from partitioning the right-hand side function. Specifically, as with any exponential method, the computational cost is dominated by the evaluation of matrix function vector products of the form $\hat{\Psi}_{i,j}\bigl(h \hat{g}_{i,j}\Lb\bigr)\,u$ and  $\Psi_{i,j}\bigl(h {g}_{i,j}\Lb\bigr)\,u$. The matrix functions are computed on the matrix $\Lb = \Lone + \Ltwo$, and therefore Formulation~\ref{frm:epirk_like_methods} 
is equivalent to having linearized the full right-hand side and then used the variation-of-constants approach. To realize in full the potential computational benefits of treating each partition on its own, one needs to be able to compute matrix functions of individual linear components, $\hat{\Psi}_{i,j}\bigl(h \hat{g}_{i,j}\Lone\bigr)\,u$
and  $\Psi_{i,j}(h {g}_{i,j}\Ltwo)\,u$.

A possible idea is to use splitting methods \cite{mclachlan2002, macnamara2017} to approximate the exponential 
function in \eqref{eqn:variation_of_constants_approach}:
\begin{equation}
	\label{eqn:exponential-approximation}
	e^{h g \Lb} \approx  e^{h g \Lone} \cdot e^{h g \Ltwo}.
\end{equation}
However, we cannot easily arrive at a discrete formulation involving the $\varphi_k$ functions. Alternatively, one can start with the discrete formulation of Formulation \eqref{frm:epirk_like_methods}, and replace the $\varphi_k(h \hat{g}_{i,j}\Lb)\,u$ terms by expressions computed using the exponential split approximation \eqref{eqn:exponential-approximation} in the function definition \eqref{eqn:phi_function_definition}. The errors resulting from these approximations need to be quantified appropriately. We do not pursue further this line of thinking in the current paper.

%, and their linear combinations as the definition of $\varphi_k$ functions involves only a single exponential term  inside the integral, see~\eqref{eqn:phi_function_definition}, whereas by using the first-order splitting we end up with a product of two exponential terms inside the integral. Higher order-splittings for the exponential of the sum of matrices exist [need citation], they too suffer from the same weakness -- of not being able to write the formulation using $\varphi_k$ functions. Higher-order splittings also require that the system be time-reversible and that makes them unsuitable for use in our context \cite{macnamara2017,sheng1989, suzuki1991, geiser2011a}.

Rather, we build partitioned methods using a structure-revealing formulation similar to the one employed in the ``Generalized Additive Runge--Kutta'' framework (GARK) \cite{Sandu_2015_GARK,Sandu_2016_GARK-MR}. In 
\ref{sec:AVG-SEPIRK} we discuss an alternative procedure to build partitioned exponential methods that involves averaging the stages of two unpartitioned methods, each using a separate linear operator as argument to the $\Psi$ functions. We next discuss GARK-based formulations.

%%%%%%%%%%%%%%%%%%%%%%%%%%%%%%%%%%%%%%%

%%%%%%%%%%%%%%%%%%%%%%%%%%%%%%%%%%%%%%%
% !TEX root = Nonstiff_pexpw.tex
%%%%%%%%%%%%%%%%%%%%%%%%%%%%%%%%%%%%%%%
\section{Partitioned-Exponential Formulations}
\label{sec:pexpw}
%%%%%%%%%%%%%%%%%%%%%%%%%%%%%%%%%%%%%%%

In this section, we construct partitioned-exponential methods using a general-structure additive exponential strategy (GAXP), which extends an unpartitioned exponential scheme to a partitioned scheme in structure-revealing, GARK-like form. The unpartitioned base methods used here to construct the new schemes fall into one of two well-known exponential time integration families, EXP \cite{Hochbruck_1997_exp,Hochbruck_1998_exp} and EPIRK \cite{Tokman_2006_EPI,Tokman_2011_EPIRK}, discussed in Section \ref{sec:classical-exp}. 

The structure-revealing GARK methods for partitioned systems allow the component functions ($\fone$ and $\ftwo$) to be evaluated on different stage values \cite{Sandu_2015_GARK,Sandu_2016_GARK-MR}. The stage values are computed in a Runge--Kutta framework. Borrowing this approach, GAXP builds different stage vectors for different partitions. Each stage computation includes coupling terms with the other components. The GAXP formulation of Rosenbrock-style EXP-W schemes \cite{hochbruck1998} is given in Formulation \ref{frm:gark_exp_w_method} and that for Runge--Kutta style EPIRK-W schemes \cite{Sandu_2019_EPIRKW} is given in Formulation \ref{frm:gark_type_epirkw_like_method}.

The practical methods we develop herein and report in \ref{sec:coefficients-PEXPW} and \ref{sec:coefficients-PEPIRKW}, 
use different stage vectors for all partitions, analogous to the philosophy of classical generalized additive Runge--Kutta methods (GARK) \cite{Sandu_2015_GARK}. The method we report in
\iflong
\ref{sec:coefficients-PSEPIRK}, 
\else
{\cite[Appendix D]{narayanamurthi2019}, }
\fi
uses same stage vectors for all partitions, analogous to the philosophy of classical additive Runge--Kutta methods (ARK) \cite{Kennedy_2003_additiveRK}. We found this to be a reasonable compromise to build partitioned-exponential methods. Similar to how classical exponential methods degenerate to classical Runge--Kutta methods when the linear operator  $\Lb$ in \eqref{eqn:linearized_rhs_funcs} is a zero matrix, our methods degenerate to a GARK or an ARK method scheme in this case.

%\sandu{I am not sure how this comments brings anything new. $\Lone$, $\Ltwo$ are already arbitrary, isn't it?}
%\mahesh{Although the two procedures can lead to the same method, the way we arrive at the methods are markedly different. The order conditions are not exactly the same but as far as I have noticed, they are equivalent and have the same solution. I cannot prove this generically for all W and split-methods. At a high-level, one can argue as follows: in the unpartitioned case, split methods are obtained by applying variation-of-constants to split right-hand side, whereas W methods are obtained by replacing all occurrences of the Jacobian J by W in a classical method. But since the classical method is itself obtained by variation of constants after a continuous linearization of the right-hand side, the residual term has a J which gets replaced by an arbitrary W. It is at this point that we can say that starting with an arbitrary linearization and performing a variation-of-constants is equivalent to starting with a continous linearization, performing variation of constants and replacing the J by an arbitrary W. Also note that the way we treat the expansion of the true solution in either case is different. If we start with a split right-hand side, we try to express the true solution using the L and the N trees and we have non-zero coefficients on the exact solution on each tree that we can form. Whereas for W methods, coefficients on trees with W nodes are zeo. I have included some version of this discussion in the remarks below.}

%%%%%%%%%%%%%%%%%%%%%%%%%%%%%%%%%%%%%%%
\subsection{The split-right-hand-side approach and the W approach for constructing partitioned schemes}
\label{sec:split-vs-W}
%%%%%%%%%%%%%%%%%%%%%%%%%%%%%%%%%%%%%%%

We seek to build methods for initial value problems (IVP) 	\ref{eqn:functional_splitting_ode_system}--\ref{eqn:linearized_rhs_funcs} where the right-hand side is additively partitioned (for two partitions, $F = \fone + \ftwo$). Two approaches, the split and the W-approach, are possible to formulate the methods and analyze the order conditions. 

In the split-RHS approach \cite{Rainwater_2014_semilinear}, the component functions are split into their corresponding linear ($\Lone$ and $\Ltwo$) and non-linear ($\None$ and $\Ntwo$) parts, and Taylor series of the exact and numerical solutions are constructed using derivatives of these parts. Order conditions are obtained by combining these terms such as to recover the derivatives of the exact solution, which involve derivatives of the un-split right hand side functions  $\fone,\ftwo$.

W formulations \cite{steihaug1979,Hairer_book_I} are obtained by replacing Jacobians ($\Jone$ and $\Jtwo$) by arbitrary square matrices ($\Wone$ and $\Wtwo$) in the formulation of an exponentially split scheme. Taylor series of the numerical solution are constructed using derivatives of the right hand side functions  $\fone,\ftwo$, and order conditions are imposed to cancel all the terms that involve the arbitrary matrices $\Wone$ and $\Wtwo$.

We argue, informally, that the split-RHS and W approaches to formulating partitioned exponential methods are equivalent.
%		
%	In the case of unpartitioned exponential methods, we obtain split methods by applying the variation-of-constants formula to the right-hand side function linearized using (an arbitrary) $\Lb$ matrix as in equation \ref{eqn:linearized_rhs}. Whereas, we obtain W-methods by replacing the exact Jacobian inside $\varphi_k$ (and $\Psi$) functions of classical exponential integrators, which are in turn constructed by applying the variation-of-constants formula to a continuously linearized right-hand side. Starting with an arbitrary linearization and performing a variation-of-constants is equivalent to: starting with a continous linearization, performing variation-of-constants and replacing the Jacobian, $\Jn$, by an arbitrary $\Wb$.
%	
In Section \ref{sec:order} we detail the procedure we follow to build methods using the partitioned-split and partitioned-W formalisms, and it turns out that the resulting machinery is largely similar.

%%%%%%%%%%%%%%%%%%%%%%%%%%%%%%%%%%%%%%%
\subsection{Partitioned methods of Rosenbrock-exponential type (PEXPW)}
\label{sec:PEXPW}
%%%%%%%%%%%%%%%%%%%%%%%%%%%%%%%%%%%%%%%

We now consider the Rosenbrock-exponential schemes described in Formulation \ref{frm:unpartitioned_exp_method} and extend them to solve partitioned systems of the form \eqref{eqn:functional_splitting_ode_system}--\eqref{eqn:linearized_rhs_funcs}.

W-methods were first introduced in \cite{steihaug1979} to  admit the use of inexact Jacobians in implicit schemes. The Jacobian matrix in the formulation of Rosenbrock methods is replaced by an arbitrary square matrix $\Wb$. Order conditions are solved while taking into account the arbitrary approximation, so as to ensure a desired order of accuracy. While W-methods admit arbitrary approximations to the Jacobian, in practice one needs some form of Jacobian approximation $\Wb \approx \J_n$  to ensure stability of the methods \cite{steihaug1979} and \cite[Section IV.7 and IV.11]{Hairer_book_II}. Exponential methods of W-type are obtained by replacing the exact Jacobian in Formulation \ref{frm:unpartitioned_exp_method} with an arbitrary square matrix $\Wb$. The resulting (unpartitioned) EXP-W methods were first proposed and studied in \cite{Hochbruck_1998_exp}. W-methods of EPIRK type are discussed in \cite{Sandu_2019_EPIRKW}.

We extend EXP-W methods of Formulation \ref{frm:unpartitioned_exp_method} to work for partitioned multiphysics systems \eqref{eqn:functional_splitting_ode_system}. To this end we choose a different arbitrary square matrix $\Wb^{\{m\}}$ matrix for each partition $m = 1,\dots,\Pn$. The resulting partitioned family of methods is called PEXPW (partitioned EXP-W).

The generic PEXPW  method is defined in Formulation \ref{frm:gark_exp_w_method}. Here  $k_{i}^{\{m\}}$ defines the $i$-the internal stage of the $m$-th partition, and $s^{\{m\}}$ is the number of stages of the method applied to the $m$-th partition.  Each partition $m$ is solved with its own EXP-W method with  $s^{\{m\}}$  stages, coefficients $\alpha_{i,j}^{\{m,m\}}$ and $\gamma_{i,j}^{\{m,m\}}$, and its arbitrary Jacobian approximation $\Wb^{\{m\}}$. The vectors $k_{i}^{\{m\}}$ are the $i$-the internal stage of the $m$-th partition. The coefficients $\alpha_{i,j}^{\{q,m\}}$ and $\gamma_{i,j}^{\{q,m\}}$ define the coupling between partitions $q$ and $m$. The method is of general structure (GAXP) Rosenbrock-exponential type.

\begin{formulation}
	\begin{eqnarray*}
		k_{i}^{\{q\}} &=&  \varphi\Bigl(h\gamma_{i,i}^{\{q,q\}}\Wb^{\{q\}}\Bigr)\, \left(h f^{\{q\}}(u_{i}^{\{q\}}) + h \Wb^{\{q\}} \sum_{m=1}^{\Pn}\sum_{j = 1}^{\min(i - 1, s^{\{m\}})} \gamma_{i,j}^{\{q,m\}} k_{j}^{\{m\}}\right),\nonumber\\
%		\widetilde{k}_{i}^{\{q\}} &=& h{k}_{i}^{\{q\}} =  \varphi(h\gamma_{i,i}^{\{q,q\}}\Wb^{\{q\}}) \left(h f^{\{q\}}(u_{i}^{\{q\}}) + h \Wb^{\{q\}} \sum_{m=1}^{\Pn}\sum_{l = 1}^{\min(i - 1, s^{\{m\}})} \gamma_{i,l}^{\{q,m\}} \widetilde{k}_{l}^{\{m\}}\right)\nonumber\\
		u_{i}^{\{q\}} &=& y_{n} +  \sum_{m = 1}^{\Pn} \sum_{j=1}^{\min(i - 1, s^{\{m\}})} \alpha_{i,j}^{\{q,m\}} {k}_{j}^{\{m\}}, \qquad i = 1, \hdots, s^{\{q\}}, \qquad q = 1,\dots,\Pn, \nonumber \\
		y_{n+1} &=& y_{n} +   \sum_{q = 1}^{\Pn} \sum_{i=1}^{s^{\{q\}}} b_{i}^{\{q\}} {k}_{i}^{\{q\}}.	
		\end{eqnarray*}
	\caption{Partitioned EXP-W method (PEXPW) of GAXP-type.}
	\label{frm:gark_exp_w_method}
\end{formulation}

\begin{remark}
The traditional EXP-W methods of Formulation \ref{frm:unpartitioned_exp_method} use a single matrix-vector product calculation with $\varphi_1(h \gamma_{i,i} \Wb)$ per stage. PEXPW methods of Formulation \eqref{frm:gark_exp_w_method} requires the evaluation of $\Pn$ matrix-vector products $\varphi_1(h \gamma_{i,i}^{\{m,m\}} \Wb^{\{m\}})$ for $m=1,\dots,\Pn$. This offers the opportunity to replace on evaluation of a function of a complex matrix into multiple evaluations of functions of simpler, or sparse, matrices. In addition, the coefficients $\gamma_{i,i}^{\{m,m\}}$ can vary between partitions, allowing possible speed-ups of the Arnoldi (or Lanczos) iterations. Therefore, the partitioned formulation \eqref{frm:gark_exp_w_method} brings additional flexibility in constructing new methods that could result into improved computational performance.
\end{remark}

%%%%%%%%%%%%%%%%%%%%%%%%%%%%%%%%%%%%%%%
\subsection{Partitioned exponential methods of EPIRK type (PEPIRKW)}
\label{sec:PEPIRKW}
%%%%%%%%%%%%%%%%%%%%%%%%%%%%%%%%%%%%%%%

Recall the Runge--Kutta-exponential schemes discussed in section \ref{sec:classical-exp}. We will further focus on building partitioned methods of EPIRK and sEPIRK type using the GARK framework. 

%%%%%%%%%%%%%%%%%%%%%%%%%%%%%%%%%%%%%%%
\subsubsection{Partitioned sEPIRK with a GAXP structure}
%%%%%%%%%%%%%%%%%%%%%%%%%%%%%%%%%%%%%%%

Consider now the classical sEPIRK scheme described in Formulation \ref{frm:unpartitioned_sepirk_method} \cite{Rainwater_2014_semilinear}. The partitioned sEPIRK scheme built in the GAXP framework is summarized in Formulation \ref{frm:gark_sepirk_method}. Unfortunately, this structure does not lead to useful partitioned exponential methods (see Lemma \ref{lemma:drawback_of_sepirk_method}).

\begin{formulation}
	\begin{eqnarray*}
		{Y}_i^{\{j\}} &=& {y}_n + \sum_{k=1}^{\Pn}{a}_{i,1}^{\{j,k\}} {\Psi}_{i,1}^{\{j,k\}}\bigl(h{g}_{i,1}^{\{j,k\}}\Lb^{\{k\}}\bigr) h{f}^{\{k\}}({y}_n) + \sum_{k=1}^{\Pn}{\sum_{l=2}^{i_Y^{\{k\}}} {a}_{i,l}^{\{j,k\}} {\Psi}_{i,l}^{\{j,k\}}\bigl(h{g}_{i,l}^{\{j,k\}}\Lb^{\{k\}}\bigr)  h\Delta_{{Y}^{\{k\}}}^{(l - 1)}\N^{\{k\}}\big({y}_n\big)}, \\
		&&\qquad\qquad\qquad\biggl(\textnormal{where}\; i = 1, \hdots, s^{\{j\}}, \; j \in \mathcal{P}\biggr), \nonumber\\
		{y}_{n + 1} &=& {y}_n + \sum_{k=1}^{\Pn}{b}_{1}^{\{k\}} {\Psi}_{s^{\{k\}},1}^{\{k, k\}}\bigl(h{g}_{s^{\{k\}},1}^{\{k,k\}}\Lb^{\{k\}}\bigr) h{f}^{\{k\}}({y}_n) + \sum_{k=1}^{\Pn}{\sum_{l=2}^{s^{\{k\}}} {b}_{l}^{\{k\}} {\Psi}_{{s^{\{k\}}},l}^{\{k, k\}}\bigl(h{g}_{{s^{\{k\}}},l}^{\{k,k\}}\Lb^{\{k\}}\bigr)  h\Delta_{{Y}^{\{k\}}}^{(l - 1)}\N^{\{k\}}\big({y}_n\big)}. \nonumber
	\end{eqnarray*}
	\caption{Partitioned sEPIRK method of GAXP-type.}
	\label{frm:gark_sepirk_method}
\end{formulation}
	
\begin{lemma}
	For the initial value problem \eqref{eqn:functional_splitting_ode_system},  the partitioned sEPIRK method of Formulation \ref{frm:gark_sepirk_method} only has a first order solution.
	\label{lemma:drawback_of_sepirk_method}
\end{lemma}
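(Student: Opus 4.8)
The plan is to expand both the exact solution of \eqref{eqn:functional_splitting_ode_system} and the numerical solution produced by Formulation~\ref{frm:gark_sepirk_method} in powers of $h$ about $t_n$, and to show that the two series already disagree at the $h^2$ term, regardless of how the method coefficients $a,b,g,p$ (the latter embedded in the $\Psi$ functions) are chosen. First I would recall the elementary Taylor data: $y(t_n+h) = y_n + h\,F(y_n) + \tfrac{h^2}{2}\bigl(F'(y_n)F(y_n)\bigr) + O(h^3)$, where $F = \fone+\ftwo$ and $F' = \Jone+\Jtwo$. The key observation for the obstruction is structural: in Formulation~\ref{frm:gark_sepirk_method} the matrix argument of \emph{every} $\Psi$ function in the update for $y_{n+1}$ is an \emph{individual} linear operator $\Lb^{\{k\}}$, never the full $\Lb$, and likewise the only ``derivative-like'' information fed into the scheme at low order is (i) the linear operators $\Lb^{\{k\}}$ appearing inside the $\varphi$-expansions and (ii) the forward differences $\Delta^{(l-1)}\N^{\{k\}}(y_n)$, which at the orders that matter reduce to differences of $\N^{\{k\}}$ evaluated at the stage vectors.

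The second step is to extract the $h^2$ coefficient of the numerical solution. Using the series expansion \eqref{eqn:phi-series-expansion}, each $\Psi_{j}(z) = \sum_k p_{j,k}\varphi_k(z)$ contributes, to the relevant order, its constant term $\Psi_j(0) = \sum_k p_{j,k}/k!$ plus a term linear in $h\Lb^{\{k\}}$. Writing the consistency condition first (the $h^1$ term forces the weights on $h f^{\{k\}}(y_n)$ to reproduce $F(y_n) = \sum_k f^{\{k\}}(y_n)$, which is exactly why the method is first order), I would then collect all $O(h^2)$ contributions to $y_{n+1}-y_n$. These are of two types: terms $\Lb^{\{k\}} f^{\{k'\}}(y_n)$ coming from the linear-in-$z$ part of $\Psi$ acting on $h f^{\{k\}}(y_n)$ (so only ``diagonal-in-the-operator'' products $\Lb^{\{k\}} f^{\{k\}}$-style combinations, weighted by $g$'s and $p$'s, can appear — crucially no $\Lb^{\{1\}} f^{\{2\}}$ or $\Lb^{\{2\}} f^{\{1\}}$ with the right coefficient to rebuild $\Jone f^{\{2\}}$ mixing), and terms from the first forward difference $\Delta^{(1)}\N^{\{k\}}(y_n) = \N^{\{k\}}(Y_2^{\{k\}}) - \N^{\{k\}}(y_n)$, which is itself $O(h)$ and whose leading part is $(\N^{\{k\}})'(y_n)\cdot(Y_2^{\{k\}}-y_n) = O(h)$, i.e. contributes at $h^2$ a term involving $(\N^{\{k\}})'(y_n)$ times a linear combination of the $f^{\{k'\}}(y_n)$.

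The third step is the matching argument: the exact $h^2$ coefficient is $\tfrac12(\Jone+\Jtwo)(\fone+\ftwo) = \tfrac12\sum_{k,k'} \Jb^{\{k\}} f^{\{k'\}}$, which contains the genuinely mixed pieces $\Jone f^{\{2\}}$ and $\Jtwo f^{\{1\}}$ with coefficient $\tfrac12$. I would argue that in the numerical $h^2$ coefficient the available building blocks are $\Lb^{\{k\}} f^{\{k'\}}$ (only with the "self" argument $k$ attached to the correct $\Psi^{\{j,k\}}$, and in the final update only the diagonal $\Psi^{\{k,k\}}$ survives) and $(\N^{\{k\}})' f^{\{k'\}} = (\Jb^{\{k\}} - \Lb^{\{k\}}) f^{\{k'\}}$; summing a diagonal-in-$\Lb$ contribution and the $\N'$ contribution can at best reproduce $\Jb^{\{k\}} f^{\{k'\}}$ for matched indices, but the cross terms require $\Lb^{\{1\}}$ to act on the $\fone$-stage contribution while $\N^{\{2\}}{}'$ acts on $\ftwo$-data, and these cannot be made to sum to $\tfrac12(\Jone f^{\{2\}} + \Jtwo f^{\{1\}})$ because the operator $\Lb$ never appears as the whole sum $\Lone+\Ltwo$ in this formulation — precisely the deficiency flagged in the discussion preceding the lemma. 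Making this last point airtight is the main obstacle: one must be careful that no clever choice of the coupling coefficients $a_{i,l}^{\{j,k\}}$, $g_{i,l}^{\{j,k\}}$ and the $p$'s inside $\Psi^{\{j,k\}}$ (which could, e.g., try to exploit $\Lb^{\{k\}}$ appearing in stage $Y_i^{\{j\}}$ and then be propagated) can smuggle in the missing mixed second-derivative terms. I would handle this by a clean bookkeeping lemma: track, through one step, the \emph{operator skeleton} of each $h^2$ contribution (which $\Lb^{\{k\}}$ or which $\N^{\{k\}}{}'$ is the outermost factor, and which $f^{\{k'\}}$ is innermost), show the skeleton set realizable by Formulation~\ref{frm:gark_sepirk_method} is strictly contained in the skeleton set of $\tfrac12 F'F$, and conclude that equality of the $h^2$ terms is impossible in general — hence the method is at most first order, and since the $h^1$ term does match under the (necessary) consistency normalization, it is exactly first order.
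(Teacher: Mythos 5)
Your proposal is correct and follows essentially the same route as the paper's proof: both arguments Taylor-expand the exact solution to order $h^2$, split the mixed derivative $\pfrac{\fone}{y}\ftwo$ (and its counterpart) into linear and nonlinear pieces, and observe that Formulation~\ref{frm:gark_sepirk_method} only ever applies functions of $\Lb^{\{k\}}$ to partition-$k$ data, so the cross terms $\Lone\Ltwo y$, $\Lone\Ntwo(y)$, $\Ltwo\Lone y$, $\Ltwo\None(y)$ can never appear in the numerical expansion. Your extra bookkeeping (separating the reachable $(\N^{\{k\}})'f^{\{k'\}}$ contributions from the unreachable $\Lb^{\{k\}}f^{\{k'\}}$, $k\ne k'$, ones) is just a more explicit rendering of the same obstruction.
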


\begin{proof}
	We specialize the proof for $\mathcal{P} = 2$. Consider the initial value problem \eqref{eqn:functional_splitting_ode_system}--\eqref{eqn:linearized_rhs_funcs}.
The true solution at a future time instant, $t_n +h$, can be written using Taylor series up to second-order as follows:
	\[
	y(t_n + h) = y(t_n) + h \left(\fone(y_n) + \ftwo(y_n)\right) + \frac{h^2}{2} \pfrac{(\fone(y) + \ftwo(y))}{y}\biggr\rvert_{y = y_n} \left(\fone(y_n) + \ftwo(y_n)\right) + \mathcal{O}(h^3). 
	\]
	Expand the mixed second order term in the Taylor series using the linear and non-linear parts of each component:	%
	\begin{equation*}
		\begin{split}
			\pfrac{\fone(y)}{y} \ftwo(y) &= \pfrac{\left(\Lone y + \None(y)\right)}{y} \left(\Ltwo y + \Ntwo(y)\right) \\
			&= \Lone \Ltwo y + \Lone \Ntwo(y) + \pfrac{\None(y)}{y} \Ltwo y+ \pfrac{\None(y)}{y} \Ntwo(y)
		\end{split}
	\end{equation*}
	So the true solution has the following terms in its expansion:
	\begin{subequations}
	\label{eqn:mixed-S-derivatives}
	\begin{equation}
	\bigg\{\Lone \Ltwo y, \Lone \Ntwo(y), \pfrac{\None(y)}{y} \Ltwo y, \pfrac{\None(y)}{y} \Ntwo(y)\bigg\}.
	\end{equation}
	The other second order term gives rise to the following additional terms in the expansion of the true solution:
	\begin{equation}
	\bigg\{\Ltwo \Lone y, \Ltwo \None(y), \pfrac{\Ntwo(y)}{y} \Lone y, \pfrac{\Ntwo(y)}{y} \None(y)\bigg\}.
	\end{equation}
	\end{subequations}
	The partitioned sEPIRK method of Formulation \ref{frm:gark_sepirk_method} applies functions of a component matrix $\Lone$ only to vectors corresponding to the same partition, and therefore it does not generate the following mixed terms in its solution expansion:
	\[
	\bigg\{\Lone \Ltwo y, \,\Lone \Ntwo(y),\, \Ltwo \Lone y,\, \Ltwo \None(y)\bigg\}.
	\]
	Consequently, Formulation \ref{frm:gark_sepirk_method} cannot be used to construct methods of order two or higher.
\end{proof}

%%%%%%%%%%%%%%%%%%%%%%%%%%%%%%%%%%%%%%%
\subsubsection{Partitioned EPIRK-W methods}

In order to obtain higher order methods we modify Formulation \ref{frm:gark_sepirk_method} as follows. First, in the spirit of EPIRK-W schemes \cite{Sandu_2019_EPIRKW}, we allow arbitrary matrices $\Wb^{\{m\}}$ for each partition in the formulation of matrix exponential functions. Second, we perform the forward difference operations on the component functions, rather than on the remainder terms. 

The resulting family of schemes, named PEPIRKW (partitioned EPIRK-W)
is summarized in Formulation \ref{frm:gark_type_epirkw_like_method}. The methods are of GAXP type, in that they build a separate set of stages for each partition, with coupling terms mixing in information from the other partitions.

\begin{formulation}
\begin{eqnarray*}
	{Y}_i^{\{q\}} &=& {y}_n + \sum_{m=1}^{\Pn}{a}_{i,1}^{\{q,m\}} {\Psi}_{i,1}^{\{q,m\}}\Bigl(h{g}_{i,1}^{\{q,m\}}{\Wb}^{\{m\}}\Bigr)\, h{f}^{\{m\}}({y}_n) \\
	&&\quad  + \sum_{m=1}^{\Pn}{\sum_{j=2}^{i_Y^{\{m\}}} {a}_{i,j}^{\{q,m\}} {\Psi}_{i,j}^{\{q,m\}}\Bigl(h{g}_{i,j}^{\{q,m\}}{\Wb}^{\{m\}}\Bigr)\,  h\Delta_{{Y}^{\{q\}}}^{(j - 1)}{f}^{\{m\}}\big({y}_n\big)}, \\
	&&\qquad\qquad\qquad i = 1, \hdots, s^{\{q\}} - 1, \; q =1,\hdots, \mathcal{P}, \nonumber\\
	{y}_{n + 1} &=& {y}_n + \sum_{m=1}^{\Pn}{b}_{1}^{\{m\}} {\Psi}_{s^{\{m\}},1}^{\{m, m\}}\Bigl(h{g}_{s^{\{m\}},1}^{\{m,m\}}{\Wb}^{\{m\}}\Bigr)\, h{f}^{\{m\}}({y}_n) \\
	&& \quad + \sum_{m=1}^{\Pn}{\sum_{j=2}^{s^{\{m\}}} {b}_{j}^{\{m\}} {\Psi}_{{s^{\{m\}}},j}^{\{m, m\}}\Bigl(h{g}_{{s^{\{m\}}},j}^{\{m,m\}}{\Wb}^{\{m\}}\Bigr)\,  h\Delta_{{Y}^{\{m\}}}^{(j - 1)}{f}^{\{m\}}\big({y}_n\big)}.
\end{eqnarray*}
\caption{Partitioned EPIRK-W method (PEPIRKW) of GAXP type.}
\label{frm:gark_type_epirkw_like_method}.
\end{formulation}

Unlike Formulation \ref{frm:gark_sepirk_method}, Formulation \ref{frm:gark_type_epirkw_like_method} can generate all mixed derivatives in a Taylor expansion of the numerical solution. This can be verified by noting that the power series representation of $\Psi$ functions has a scaled identity as the first term which in turn gives rise to all mixed derivatives of the partitioning functions as does the underlying GARK method. Consequently, higher order methods are achievable with Formulation \ref{frm:gark_type_epirkw_like_method}.

The methods proposed in this section are by no means an exhaustive list of  partitioned exponential methods of EXP and EPIRK type, but rather a sampling of different ways to structure computations for partitioned systems given the shortcomings in deriving one from the IVP, as established in Section \ref{sec:partitioned-problems}. We now turn our attention to the mechanics of actual method building of the types discussed.

%%%%%%%%%%%%%%%%%%%%%%%%%%%%%%%%%%%%%%%

%%%%%%%%%%%%%%%%%%%%%%%%%%%%%%%%%%%%%%%
% !TEX root = Nonstiff_pexpw.tex
%%%%%%%%%%%%%%%%%%%%%%%%%%%%%%%%%%%%%%%
\section{Order Conditions}
\label{sec:order}
%%%%%%%%%%%%%%%%%%%%%%%%%%%%%%%%%%%%%%%
To build a new time integration method in the formulations proposed in the earlier section, one must take successive derivatives and construct the Taylor series expansions of the exact and numerical solutions, and match them up to the desired order. Equating the coefficients of the two expansions gives rise to `classical' (nonstiff) order conditions that are solved to determine the coefficients of the scheme. When constructing methods of a high order, repeated differentiation of the expression for the numerical approximation can get cumbersome. We will employ Butcher's rooted tree and B-series formalism to ease this process \cite{Hairer_book_I,calvo1994,hairer2006,butcher2009,Butcher_2016_book,Butcher_2011_exp-order,luan2013,Rainwater_2014_semilinear,Sandu_2014_expK,Sandu_2019_EPIRKW}.

%%%%%%%%%%%%%%%%%%%%%%%%%%%%%%%%%%%%%%%
\subsection{TPS-trees and the corresponding B-series}
%%%%%%%%%%%%%%%%%%%%%%%%%%%%%%%%%%%%%%%

Rooted trees graphically represent the elementary differentials that arise during Taylor expansion of both the exact solution and the numerical approximation \cite{Hairer_book_I,Butcher_2016_book,butcher1972}. The formulation of the numerical approximation and the form of the right-hand side function  (such as non-split, linearized about the current state, and split using a linear operator) together determine the structure of the elementary differentials, and consequently the rooted trees. For example, the rooted trees of an unpartitioned Runge--Kutta method are T-trees \cite{Hairer_book_I,Butcher_2016_book,butcher2009}. An expansion defined using rooted trees as the basis is called B-series \cite{Hairer_book_I}.

\begin{definition}[B-series \cite{Hairer_book_I,hairer2006,chartier2005,butcher2009}] A B-series $\mathsf{a}: \mathcal{T} \cup \{\emptyset\} \mapsto \R$ is a mapping from the set of rooted trees $\mathcal{T}$ and the empty tree $(\emptyset)$, to real numbers:
	\begin{eqnarray*}
		B(\mathsf{a}, y) &=& \mathsf{a}(\emptyset)\, y + \displaystyle\sum_{\tau \in \mathcal{T}} \mathsf{a}(\tau)\, \frac{h^{\abs{\tau}}}{\sigma(\tau)}\, F(\tau)(y) {.}
	\end{eqnarray*}
Here $h$ is the timestep where the numerical method approximates the exact solution, $y_{n+1} \approx y(t_n + h)$.
For each tree $\tau \in \mathcal{T} \cup \{\emptyset\}$, $\abs{\tau}$ is the order of the tree, and corresponds to the number of nodes in the tree, and $\sigma(\tau)$ is the order of the symmetry group associated with the tree.
$F(\tau)(y)$ is the elementary differential corresponding to the tree, evaluated at $y$.
\end{definition}

We consider the initial value problem \eqref{eqn:functional_splitting_ode_system} with a two-way partitioned system, $P=2$. To track higher derivatives of  $\fone$ and $\ftwo$ one needs two different colors for the nodes. Further, to account for the linearized partition \eqref{eqn:linearized_rhs_funcs}, one needs two types of nodes for each component, one representing the linear parts and the other derivatives of the nonlinear parts. Consequently, the set of Butcher trees that represent the solutions of the partitioned system \eqref{eqn:functional_splitting_ode_system} with $P=2$ consists of four different node types.

\begin{definition}[TPS-trees] The set of TPS-trees is a generalized set of T-trees with four different types of nodes, chosen from the set $\{\CIRCLE, \Circle, \blacksquare, \square\}$. The tree structures are constrained as follows:
	\begin{itemize}
		\item[i.] Each node in the tree can be either a square or a round node, $\textnormal{node} \in \{\blacksquare, \square\, \CIRCLE, \Circle\}$, however 
		\item[ii.] Square nodes have only one child, i.e., $\textnormal{nodes} \in \{\blacksquare, \square\}$ are singly branched.
	\end{itemize}
	\qed
\end{definition}

The elementary differentials associated with each trees are constructed differently, depending on the type of method we consider, as discussed in Section \ref{sec:split-vs-W}:
\begin{itemize}
\item For partitioned-split methods, the square nodes nodes $\blacksquare$ and $\square$ represent the action of the linear parts $\Lone$ and $\Ltwo$, respectively. The round nodes $\CIRCLE$ and $\Circle$ represent the non-linear parts $\None$ and $\Ntwo$, respectively. We use this formalism for deriving partitioned methods built by averaging unpartitioned counterparts, as described in  \ref{sec:AVG-SEPIRK}.
\item In the case of partitioned W-methods, the square nodes $\blacksquare$ and $\square$ represent the action of the approximate Jacobians $\Wone$ and $\Wtwo$, respectively. The round nodes $\CIRCLE$ and $\Circle$ represent the component functions $\fone$ and $\ftwo$, respectively. We use this formalism to derive partitioned methods of PEXPW and PEPIRKW type, as discussed in sections  \ref{sec:PEXPW} and \ref{sec:PEPIRKW}, respectively.
\end{itemize}

Before continuing our discussion of order conditions for partitioned exponential methods, we revisit the definition of the $B^{\#}$ operator, which takes a B-series and returns the set of coefficients ordered sequentially over the rooted trees.  
\begin{definition}[The $B^{\#}$ operator \cite{Sandu_2014_expK}] Let $B(\mathsf{a}, y)$ be a B-series, then the operator $B^{\#}$ is defined as follows: 
	\begin{eqnarray*}
		B^{\#}\left(B(\mathsf{a}, y)\right) &=& \mathsf{a}.
	\end{eqnarray*}
	\qed
\end{definition}

The TPS trees up to order three are shown in 
\iflong
Tables \ref{table:tps_trees1}--\ref{table:tps_trees11} of \ref{sec:TPS}. 
\else
{\cite[Tables F1--F11]{narayanamurthi2019}.} 
\fi
The tables show the tree geometry, and provide the following information for each tree $\tau$:
\begin{itemize}
\item $F(\tau)(y)$: the elementary differential corresponding to the tree, evaluated at $y$ in tensor notation (similarly structured for split and W-methods);
% \item coefficients of an arbitrary B-series as a function of the tree, $\mathsf{a}(\tau)$;
\item $\mathsf{a}(\tau)$: the coefficient of B-series $B(\mathsf{a}, y)$ corresponding to the tree $\tau$;
% \item $B^{\#}$ applied to the multiplication of the arbitrary B-series, $B(\mathsf{a}(\tau), y)$, by linear operators corresponding to the nodes $\blacksquare$ and $\square$;
\item $B^{\#}\left(h\, \Lb\, B(\mathsf{a}(\tau), y)\right)$ and $B^{\#}\left(h\, \Mb\, B(\mathsf{a}(\tau), y)\right)$:  coefficients corresponding to the tree $\tau$, of the result of  multiplying the arbitrary B-series $B\left(\mathsf{a}(\tau), y\right)$ by linear operators represented by the nodes $\blacksquare$ and $\square$, respectively;
% \item $B^{\#}$ applied to the evaluation of functions corresponding to the nodes $\CIRCLE$ and $\Circle$ on the arbitrary B-series $B(\mathsf{a}(\tau), y)$; 
\item $B^{\#}\left(h\, \N(B\left(\mathsf{a}(\tau), y\right)\right)$ and $B^{\#}\left(h\, \P(B\left(\mathsf{a}(\tau), y\right)\right)$: coefficients corresponding to the tree $\tau$ of the result of evaluating functions represented by nodes $\CIRCLE$ and $\Circle$ on the arbitrary B-series $B(\mathsf{a}(\tau), y)$; 
\item $1/\gamma_{\textsc{s}}(\tau)$ and $1/\gamma_{\textsc{w}}(\tau)$: the coefficients of the TPS-tree $\tau$ in the B-series expansion of the exact solution for split and W-methods, respectively.
\end{itemize}

\begin{remark}[Interpretation of the trees]
Although the interpretation of the nodes $\{\CIRCLE, \Circle,  \blacksquare, \square\}$ differs between split methods and W-methods, the definition of the TPS-tree operations in 
\iflong
Tables \ref{table:tps_trees1}--\ref{table:tps_trees11} of \ref{sec:TPS} 
\else
{\cite[Tables F1--F11]{narayanamurthi2019} }
\fi
remains the same. 
%The interpretation of the elementary differentials and the coefficients of the exact solution together account for this difference.
In
\iflong
Tables \ref{table:tps_trees1}--\ref{table:tps_trees11} of \ref{sec:TPS}, 
\else
{\cite[Tables F1--F11]{narayanamurthi2019}, }
\fi
we use the symbols $\Lb$ and $\Mb$ to represent the linear parts of the two components and the corresponding nodes are $\{\blacksquare, \square\}$, respectively. The symbols $\N$ and $\P$ represent the nonlinear parts of the two components and the corresponding nodes  are $\{\CIRCLE, \Circle\}$. The corresponding quantities for each case are shown in Tables \ref{table:equivalence_split_methods} and \ref{table:equivalence_W_methods}.

\begin{table}[tbh!]
	\parbox{.45\linewidth}{
		\centering
		\begin{tabular}{|c|c|c|}
			\hline
			Node & %
			\iflong
			Tables \ref{table:tps_trees1}--\ref{table:tps_trees11} of \ref{sec:TPS} 
			\else
			{\cite[Tables F1--F11]{narayanamurthi2019} }
			\fi
			& The quantity \\
			& notation & it represents \\
			\hline
			\hline
			$\blacksquare$ & $\Lb$ & $\Lone$ \\
			\hline
			$\square$ & $\Mb$ & $\Ltwo$ \\
			\hline
			$\CIRCLE$ & $\N$ & $\None$ \\
			\hline
			$\Circle$ & $\P$ & $\Ntwo$ \\
			\hline
		\end{tabular}
		\caption{Notation for Split-RHS methods\label{table:equivalence_split_methods}}
	}
	\hfill
	\parbox{.45\linewidth}{
		\centering
		\begin{tabular}{|c|c|c|}
			\hline
			Node & 
			\iflong
			Tables \ref{table:tps_trees1}--\ref{table:tps_trees11} of \ref{sec:TPS} 
			\else
			{\cite[Tables F1--F11]{narayanamurthi2019} }
			\fi
			& The quantity \\
			& notation & it represents \\
			\hline
			\hline
			$\blacksquare$ & $\Lb$ & $\Wone$ \\
			\hline
			$\square$ & $\Mb$ & $\Wtwo$ \\
			\hline
			$\CIRCLE$ & $\N$ & $\fone$ \\
			\hline
			$\Circle$ & $\P$ & $\ftwo$ \\
			\hline
		\end{tabular}
		\caption{Notation for W methods\label{table:equivalence_W_methods}}
	}
\end{table}
\end{remark}

\begin{remark}[Trees ending in a square node]
The interpretation of TPS-trees ending in a square node is as follows:
\begin{itemize}
\item for a split-RHS method, if a TPS-tree ends in a square node,  $\blacksquare$ (or $\square$), it corresponds to the appearance of an $\Lone y$ (or $\Ltwo y$) term at the appropriate location in the elementary differential. 
\item for a W method, if a TPS-tree ends in a square node, $\blacksquare$ (or $\square$), it corresponds to the appearance of a $\Wone y$ (or $\Wtwo y$) term at the appropriate location in the  elementary differential. Terms containing $\Wone y$ (or $\Wtwo y$) do not appear in the Taylor expansion of the exact or the numerical solution. Consequentially, trees ending in a square node carry a zero B-series coefficient in both exact and numerical solutions.
\end{itemize}
\end{remark}

%%%%%%%%%%%%%%%%%%%%%%%
\subsection{B-series of the exact solution} 
%%%%%%%%%%%%%%%%%%%%%%%

The last two rows of 
\iflong
Tables \ref{table:tps_trees1}--\ref{table:tps_trees11} of \ref{sec:TPS} 
\else
{\cite[Tables F1--F11]{narayanamurthi2019} }
\fi
give the B-series of the exact solution expressed using TPS-trees ($1/\gamma_{\textsc{s}}(\tau)$ or $1/\gamma_{\textsc{w}}(\tau)$, depending on the type of method, split-RHS or W, respectively). These coefficients are derived by mapping T-trees to the corresponding TPS-trees, together with the coefficients of the B-series expansion of the exact solution. 

For instance, the elementary differential $F_y\, F$, corresponding to a single T-tree, maps to sixteen elementary differentials of a partitioned split-RHS method of the form \eqref{eqn:mixed-S-derivatives}. Each of these elementary differentials corresponds to a TPS-tree, and the B-series coefficient of the exact solution for each of the sixteen TPS-trees will be the same coefficient as on the T-tree corresponding to $F_y\, F$. These mappings can be derived by Taylor expanding the true solution in elementary differentials corresponding to T-trees and replacing the derivatives with those when the function is split as is done in Lemma \ref{lemma:drawback_of_sepirk_method} for the mixed derivatives case.

For W-methods, the TPS-trees are in fact a superset of the P-trees \cite[Section II.15]{Hairer_book_I}. The B-series coefficients of the exact solution for trees that do not contain a $\Wb$ node are the same as the coefficients of the equivalent P-trees. The B-series coefficients of the exact solution for trees that  contain a $\Wb$ node are zero, as these trees do not show up in the expansion of the true solution.

%%%%%%%%%%%%%%%%%%%%%%%%%%%%%%%%%%%%%%%
\subsection{Operations with TPS-trees and the corresponding B-series}
%%%%%%%%%%%%%%%%%%%%%%%%%%%%%%%%%%%%%%%

We use the linearity property of B-series and the $B^{\#}$ operator, and the TPS operations defined below, 
%in the Tables \ref{table:tps_trees1}--\ref{table:tps_trees11} 
to derive the B-series expansion of the numerical solutions via an algorithmic procedure similar to that outlined in \cite{Sandu_2014_expK,Sandu_2019_EPIRKW}.

\begin{definition}
The operator $\Tree$ maps an elementary differential to the corresponding TPS-tree:

\[
	\Tree: \;\textnormal{elementary differential}\; \mapsto \;\textnormal{TPS-tree}
\]	
\end{definition}

\begin{example}
	\label{example:elemetary_differential_to_tree_function}
	The application of operator $\Tree$ on elementary differentials for split-RHS and W-methods is illustrated below:
	\begin{table}[h!]
		\centering
		\begin{tabular}{ll|ll}
		\multicolumn{2}{c|}{Split-RHS} & \multicolumn{2}{c}{W}\\
		\hline
		$\Tree\bigl(\Lone y\bigr)$ & $\leftrightarrow$ \quad \raisebox{-4pt}{$\begin{tikzpicture}[scale=.5,
				meagrecirc/.style={circle,draw, fill=black!100,thick},
				fatcirc/.style={circle,draw,thick},
				fatrect/.style = {rectangle,draw,thick},
				meagrerect/.style = {rectangle,fill=black!100,draw,thick}]
			\node[meagrerect] (j) at (0,0) [label=right:$j$] {};
		\end{tikzpicture}$} & 
		$\Tree\bigl(\Wone y\bigr)$ & $\leftrightarrow$ \quad  \raisebox{-4pt}{$\begin{tikzpicture}[scale=.5,
			meagrecirc/.style={circle,draw, fill=black!100,thick},
			fatcirc/.style={circle,draw,thick},
			fatrect/.style = {rectangle,draw,thick},
			meagrerect/.style = {rectangle,fill=black!100,draw,thick}]
		\node[meagrerect] (j) at (0,0) [label=right:$j$] {};
		\end{tikzpicture}$} \\
		$\Tree\bigl(\None(y)\bigr)$ & $\leftrightarrow$ \quad \raisebox{-4pt}{$\begin{tikzpicture}[scale=.5,
				meagrecirc/.style={circle,draw, fill=black!100,thick},
				fatcirc/.style={circle,draw,thick},
				fatrect/.style = {rectangle,draw,thick},
				meagrerect/.style = {rectangle,fill=black!100,draw,thick}]
			\node[meagrecirc] (j) at (0,0) [label=right:$j$] {};
		\end{tikzpicture}$} &
		$\Tree\bigl(\fone(y)\bigr)$ & $\leftrightarrow$ \quad \raisebox{-4pt}{$\begin{tikzpicture}[scale=.5,
			meagrecirc/.style={circle,draw, fill=black!100,thick},
			fatcirc/.style={circle,draw,thick},
			fatrect/.style = {rectangle,draw,thick},
			meagrerect/.style = {rectangle,fill=black!100,draw,thick}]
		\node[meagrecirc] (j) at (0,0) [label=right:$j$] {};
		\end{tikzpicture}$} \\
		$\Tree\bigl(\Ltwo y\bigr)$ & $\leftrightarrow$ \quad \raisebox{-4pt}{$\begin{tikzpicture}[scale=.5,
			meagrecirc/.style={circle,draw, fill=black!100,thick},
			fatcirc/.style={circle,draw,thick},
			fatrect/.style = {rectangle,draw,thick},
			meagrerect/.style = {rectangle,fill=black!100,draw,thick}]
		\node[fatrect] (j) at (0,0) [label=right:$j$] {};
		\end{tikzpicture}$} &
		$\Tree\bigl(\Wtwo y\bigr)$ & $\leftrightarrow$ \quad \raisebox{-4pt}{$\begin{tikzpicture}[scale=.5,
			meagrecirc/.style={circle,draw, fill=black!100,thick},
			fatcirc/.style={circle,draw,thick},
			fatrect/.style = {rectangle,draw,thick},
			meagrerect/.style = {rectangle,fill=black!100,draw,thick}]
		\node[fatrect] (j) at (0,0) [label=right:$j$] {};
		\end{tikzpicture}$} \\
		$\Tree\bigl(\Ntwo(y)\bigr)$ & $\leftrightarrow$ \quad \raisebox{-4pt}{$\begin{tikzpicture}[scale=.5,
			meagrecirc/.style={circle,draw, fill=black!100,thick},
			fatcirc/.style={circle,draw,thick},
			fatrect/.style = {rectangle,draw,thick},
			meagrerect/.style = {rectangle,fill=black!100,draw,thick}]
		\node[fatcirc] (j) at (0,0) [label=right:$j$] {};
		\end{tikzpicture}$} &
		$\Tree\bigl(\ftwo(y)\bigr)$ & $\leftrightarrow$ \quad \raisebox{-4pt}{$\begin{tikzpicture}[scale=.5,
			meagrecirc/.style={circle,draw, fill=black!100,thick},
			fatcirc/.style={circle,draw,thick},
			fatrect/.style = {rectangle,draw,thick},
			meagrerect/.style = {rectangle,fill=black!100,draw,thick}]
		\node[fatcirc] (j) at (0,0) [label=right:$j$] {};
		\end{tikzpicture}$} \\
		$\Tree\bigl(\None_y(y) \Ltwo y\bigr)$ & $\leftrightarrow$ \quad \raisebox{-4pt}{$\begin{tikzpicture}[scale=.5,
			meagrecirc/.style={circle,draw, fill=black!100,thick},
			fatcirc/.style={circle,draw,thick},
			fatrect/.style = {rectangle,draw,thick},
			meagrerect/.style = {rectangle,fill=black!100,draw,thick}]
		\node[meagrecirc] (j) at (0,0) [label=right:$j$] {};
		\node[fatrect] (k) at (1,1) [label=right:$k$] {};
		\draw[-] (j) -- (k);
		\end{tikzpicture}$} & 
		$\Tree\bigl(\fone_y(y) \Wtwo y\bigr)$ & $\leftrightarrow$ \quad \raisebox{-4pt}{$\begin{tikzpicture}[scale=.5,
			meagrecirc/.style={circle,draw, fill=black!100,thick},
			fatcirc/.style={circle,draw,thick},
			fatrect/.style = {rectangle,draw,thick},
			meagrerect/.style = {rectangle,fill=black!100,draw,thick}]
		\node[meagrecirc] (j) at (0,0) [label=right:$j$] {};
		\node[fatrect] (k) at (1,1) [label=right:$k$] {};
		\draw[-] (j) -- (k);
		\end{tikzpicture}$}
	\end{tabular}
	\end{table}
\end{example}

\FloatBarrier

\begin{definition}
The operator $\RemoveRoot^{[\ell]}_{\zeta}(\tau)$ removes $\ell$ times the node $\zeta$ from the root of tree $\tau$, to produce a resultant tree. The operation is only defined on trees where such removal is possible.
\end{definition}

\begin{example}
	\label{example:removeroot_function}
	The application of operator $\RemoveRoot^{\{\ell\}}_{\zeta}$ to TPS trees for split-RHS and W-methods is illustrated below:
	\begin{table}[h!]
		\centering
		\begin{tabular}{ll|ll}
		\multicolumn{2}{c}{Split-RHS} & \multicolumn{2}{c}{W}\\
		$\RemoveRoot^{\{1\}}_{\Lone}\left(\Tree\bigl(\Lone y\bigr)\right)$ & $\leftrightarrow$ \quad \raisebox{-4pt}{$\begin{tikzpicture}[scale=.5,
				meagrecirc/.style={circle,draw, fill=black!100,thick},
				fatcirc/.style={circle,draw,thick},
				fatrect/.style = {rectangle,draw,thick},
				meagrerect/.style = {rectangle,fill=black!100,draw,thick}]
			$\emptyset$ (j) at (0,0) [label=right:$$] {};
		\end{tikzpicture}$} & 
		$\RemoveRoot^{\{1\}}_{\Wone}\left(\Tree\bigl(\Wone y\bigr)\right)$ & $\leftrightarrow$ \quad \raisebox{-4pt}{$\begin{tikzpicture}[scale=.5,
			meagrecirc/.style={circle,draw, fill=black!100,thick},
			fatcirc/.style={circle,draw,thick},
			fatrect/.style = {rectangle,draw,thick},
			meagrerect/.style = {rectangle,fill=black!100,draw,thick}]
		$\emptyset$ (j) at (0,0) [label=right:$$] {};
		\end{tikzpicture}$} \\
		$\RemoveRoot^{\{1\}}_{\Lone}\left(\Tree\bigl(\Lone\Lone(y)\bigr)\right)$ & $\leftrightarrow$ \quad \raisebox{-4pt}{$\begin{tikzpicture}[scale=.5,
				meagrecirc/.style={circle,draw, fill=black!100,thick},
				fatcirc/.style={circle,draw,thick},
				fatrect/.style = {rectangle,draw,thick},
				meagrerect/.style = {rectangle,fill=black!100,draw,thick}]
			\node[meagrerect] (j) at (0,0) [label=right:$j$] {};
		\end{tikzpicture}$} &
		$\RemoveRoot^{\{1\}}_{\Wone}\left(\Tree\bigl(\Wone\Wone(y)\bigr)\right)$ & $\leftrightarrow$ \quad \raisebox{-4pt}{$\begin{tikzpicture}[scale=.5,
			meagrecirc/.style={circle,draw, fill=black!100,thick},
			fatcirc/.style={circle,draw,thick},
			fatrect/.style = {rectangle,draw,thick},
			meagrerect/.style = {rectangle,fill=black!100,draw,thick}]
		\node[meagrerect] (j) at (0,0) [label=right:$j$] {};
		\end{tikzpicture}$} \\
		$\RemoveRoot^{\{1\}}_{\Ltwo}(\Tree\bigl(\Ltwo y\bigr))$ & $\leftrightarrow$ \quad \raisebox{-4pt}{$\begin{tikzpicture}[scale=.5,
			meagrecirc/.style={circle,draw, fill=black!100,thick},
			fatcirc/.style={circle,draw,thick},
			fatrect/.style = {rectangle,draw,thick},
			meagrerect/.style = {rectangle,fill=black!100,draw,thick}]
		$\emptyset$ (j) at (0,0) [label=right:$$] {};
		\end{tikzpicture}$} &
		$\RemoveRoot^{\{1\}}_{\Wtwo}(\Tree\bigl(\Wtwo y\bigr))$ & $\leftrightarrow$ \quad \raisebox{-4pt}{$\begin{tikzpicture}[scale=.5,
			meagrecirc/.style={circle,draw, fill=black!100,thick},
			fatcirc/.style={circle,draw,thick},
			fatrect/.style = {rectangle,draw,thick},
			meagrerect/.style = {rectangle,fill=black!100,draw,thick}]
		$\emptyset$ (j) at (0,0) [label=right:$$] {};
		\end{tikzpicture}$} \\
		$\RemoveRoot^{\{1\}}_{\Ltwo}(\Tree\bigl(\Ltwo\Ltwo(y)\bigr))$ & $\leftrightarrow$ \quad \raisebox{-4pt}{$\begin{tikzpicture}[scale=.5,
			meagrecirc/.style={circle,draw, fill=black!100,thick},
			fatcirc/.style={circle,draw,thick},
			fatrect/.style = {rectangle,draw,thick},
			meagrerect/.style = {rectangle,fill=black!100,draw,thick}]
		\node[fatrect] (j) at (0,0) [label=right:$j$] {};
		\end{tikzpicture}$} &
		$\RemoveRoot^{\{1\}}_{\Wtwo}(\Tree\bigl(\Wtwo\Wtwo(y)\bigr))$ & $\leftrightarrow$ \quad \raisebox{-4pt}{$\begin{tikzpicture}[scale=.5,
			meagrecirc/.style={circle,draw, fill=black!100,thick},
			fatcirc/.style={circle,draw,thick},
			fatrect/.style = {rectangle,draw,thick},
			meagrerect/.style = {rectangle,fill=black!100,draw,thick}]
		\node[fatrect] (j) at (0,0) [label=right:$j$] {};
		\end{tikzpicture}$} \\
		$\RemoveRoot^{\{1\}}_{\Ltwo}(\Tree\bigl(\Ltwo \None(y)\bigr))$ & $\leftrightarrow$ \quad \raisebox{-4pt}{$\begin{tikzpicture}[scale=.5,
			meagrecirc/.style={circle,draw, fill=black!100,thick},
			fatcirc/.style={circle,draw,thick},
			fatrect/.style = {rectangle,draw,thick},
			meagrerect/.style = {rectangle,fill=black!100,draw,thick}]
		\node[meagrecirc] (j) at (0,0) [label=right:$j$] {};
		\end{tikzpicture}$} & 
		$\RemoveRoot^{\{1\}}_{\Wtwo}(\Tree\bigl(\Wtwo \fone(y)\bigr))$ & $\leftrightarrow$ \quad \raisebox{-4pt}{$\begin{tikzpicture}[scale=.5,
			meagrecirc/.style={circle,draw, fill=black!100,thick},
			fatcirc/.style={circle,draw,thick},
			fatrect/.style = {rectangle,draw,thick},
			meagrerect/.style = {rectangle,fill=black!100,draw,thick}]
		\node[meagrecirc] (j) at (0,0) [label=right:$j$] {};
		\end{tikzpicture}$} \\
		$\RemoveRoot^{\{1\}}_{\Ltwo}(\Tree\bigl(\Ltwo \Ltwo \None(y)\bigr))$ & $\leftrightarrow$ \quad \raisebox{-4pt}{$\begin{tikzpicture}[scale=.5,
			meagrecirc/.style={circle,draw, fill=black!100,thick},
			fatcirc/.style={circle,draw,thick},
			fatrect/.style = {rectangle,draw,thick},
			meagrerect/.style = {rectangle,fill=black!100,draw,thick}]
		\node[fatrect] (k) at (1,1) [label=right:$j$] {};
		\node[meagrecirc] (l) at (2,2) [label=right:$k$] {};
		\draw[-] (k) -- (l);
		\end{tikzpicture}$} & 
		$\RemoveRoot^{\{1\}}_{\Wtwo}(\Tree\bigl(\Wtwo \Wtwo \fone(y)\bigr))$ & $\leftrightarrow$ \quad \raisebox{-4pt}{$\begin{tikzpicture}[scale=.5,
			meagrecirc/.style={circle,draw, fill=black!100,thick},
			fatcirc/.style={circle,draw,thick},
			fatrect/.style = {rectangle,draw,thick},
			meagrerect/.style = {rectangle,fill=black!100,draw,thick}]
		\node[fatrect] (k) at (1,1) [label=right:$j$] {};
		\node[meagrecirc] (l) at (2,2) [label=right:$k$] {};
		\draw[-] (k) -- (l);
		\end{tikzpicture}$} \\
		$\RemoveRoot^{\{2\}}_{\Ltwo}(\Tree\bigl(\Ltwo \Ltwo \None(y)\bigr))$ & $\leftrightarrow$ \quad \raisebox{-4pt}{$\begin{tikzpicture}[scale=.5,
			meagrecirc/.style={circle,draw, fill=black!100,thick},
			fatcirc/.style={circle,draw,thick},
			fatrect/.style = {rectangle,draw,thick},
			meagrerect/.style = {rectangle,fill=black!100,draw,thick}]
		\node[meagrecirc] (l) at (2,2) [label=right:$j$] {};
		\end{tikzpicture}$} & 
		$\RemoveRoot^{\{2\}}_{\Wtwo}(\Tree\bigl(\Wtwo \Wtwo \fone(y)\bigr))$ & $\leftrightarrow$ \quad \raisebox{-4pt}{$\begin{tikzpicture}[scale=.5,
			meagrecirc/.style={circle,draw, fill=black!100,thick},
			fatcirc/.style={circle,draw,thick},
			fatrect/.style = {rectangle,draw,thick},
			meagrerect/.style = {rectangle,fill=black!100,draw,thick}]
		\node[meagrecirc] (l) at (2,2) [label=right:$j$] {};
		\end{tikzpicture}$} \\
		$\RemoveRoot^{\{1\}}_{\Lone}(\Tree\bigl(\Ltwo \Ltwo \None(y)\bigr))$ & $\leftrightarrow$ \quad \raisebox{-4pt}{$\textnormal{undefined}$} & 
		$\RemoveRoot^{\{1\}}_{\Wone}(\Tree\bigl(\Wtwo \Wtwo \fone(y)\bigr))$ & $\leftrightarrow$ \quad \raisebox{-4pt}{$\textnormal{undefined}$} 
	\end{tabular}
	\end{table}
\end{example}

\FloatBarrier

\begin{definition}
	The operator $\varrho(\tau)$ returns the root node of the tree $\tau$. 
\end{definition}

%\begin{remark}
%	In the examples above, the root is labeled $j$. As a convention, tree roots are labeled $j$, subsequent children take labels $k, l, \hdots$ \cite{Hairer_book_I}. One may also think of $\varrho$ as returning a single node tree corresponding to the root of the argument tree.
%\end{remark}

\begin{example}
	\label{example:elementary_differential_to_tree_function}
	The application of operator $\varrho$ to TPS trees for split-RHS and W-methods is illustrated below:
	\begin{table}[h!]
		\centering
		\begin{tabular}{ll|ll}
		\multicolumn{2}{c}{Split-RHS} & \multicolumn{2}{c}{W}\\
		$\varrho(\Tree\bigl(\Lone y\bigr))$ & $\leftrightarrow$ \quad \raisebox{-4pt}{$\begin{tikzpicture}[scale=.5,
				meagrecirc/.style={circle,draw, fill=black!100,thick},
				fatcirc/.style={circle,draw,thick},
				fatrect/.style = {rectangle,draw,thick},
				meagrerect/.style = {rectangle,fill=black!100,draw,thick}]
			\node[meagrerect] (j) at (0,0) [label=right:$$] {};
		\end{tikzpicture}$} & 
		$\varrho(\Tree\bigl(\Wone y\bigr))$ & $\leftrightarrow$ \quad \raisebox{-4pt}{$\begin{tikzpicture}[scale=.5,
			meagrecirc/.style={circle,draw, fill=black!100,thick},
			fatcirc/.style={circle,draw,thick},
			fatrect/.style = {rectangle,draw,thick},
			meagrerect/.style = {rectangle,fill=black!100,draw,thick}]
		\node[meagrerect] (j) at (0,0) [label=right:$$] {};
		\end{tikzpicture}$} \\
		$\varrho(\Tree\bigl(\None(y)\bigr))$ & $\leftrightarrow$ \quad \raisebox{-4pt}{$\begin{tikzpicture}[scale=.5,
				meagrecirc/.style={circle,draw, fill=black!100,thick},
				fatcirc/.style={circle,draw,thick},
				fatrect/.style = {rectangle,draw,thick},
				meagrerect/.style = {rectangle,fill=black!100,draw,thick}]
			\node[meagrecirc] (j) at (0,0) [label=right:$$] {};
		\end{tikzpicture}$} &
		$\varrho(\Tree\bigl(\fone(y)\bigr))$ & $\leftrightarrow$ \quad \raisebox{-4pt}{$\begin{tikzpicture}[scale=.5,
			meagrecirc/.style={circle,draw, fill=black!100,thick},
			fatcirc/.style={circle,draw,thick},
			fatrect/.style = {rectangle,draw,thick},
			meagrerect/.style = {rectangle,fill=black!100,draw,thick}]
		\node[meagrecirc] (j) at (0,0) [label=right:$$] {};
		\end{tikzpicture}$} \\
		$\varrho(\Tree\bigl(\Ltwo y\bigr))$ & $\leftrightarrow$ \quad \raisebox{-4pt}{$\begin{tikzpicture}[scale=.5,
			meagrecirc/.style={circle,draw, fill=black!100,thick},
			fatcirc/.style={circle,draw,thick},
			fatrect/.style = {rectangle,draw,thick},
			meagrerect/.style = {rectangle,fill=black!100,draw,thick}]
		\node[fatrect] (j) at (0,0) [label=right:$$] {};
		\end{tikzpicture}$} &
		$\varrho(\Tree\bigl(\Wtwo y\bigr))$ & $\leftrightarrow$ \quad \raisebox{-4pt}{$\begin{tikzpicture}[scale=.5,
			meagrecirc/.style={circle,draw, fill=black!100,thick},
			fatcirc/.style={circle,draw,thick},
			fatrect/.style = {rectangle,draw,thick},
			meagrerect/.style = {rectangle,fill=black!100,draw,thick}]
		\node[fatrect] (j) at (0,0) [label=right:$$] {};
		\end{tikzpicture}$} \\
		$\varrho(\Tree\bigl(\Ntwo(y)\bigr))$ & $\leftrightarrow$ \quad \raisebox{-4pt}{$\begin{tikzpicture}[scale=.5,
			meagrecirc/.style={circle,draw, fill=black!100,thick},
			fatcirc/.style={circle,draw,thick},
			fatrect/.style = {rectangle,draw,thick},
			meagrerect/.style = {rectangle,fill=black!100,draw,thick}]
		\node[fatcirc] (j) at (0,0) [label=right:$$] {};
		\end{tikzpicture}$} &
		$\varrho(\Tree\bigl(\ftwo(y)\bigr))$ & $\leftrightarrow$ \quad \raisebox{-4pt}{$\begin{tikzpicture}[scale=.5,
			meagrecirc/.style={circle,draw, fill=black!100,thick},
			fatcirc/.style={circle,draw,thick},
			fatrect/.style = {rectangle,draw,thick},
			meagrerect/.style = {rectangle,fill=black!100,draw,thick}]
		\node[fatcirc] (j) at (0,0) [label=right:$$] {};
		\end{tikzpicture}$} \\
		$\varrho(\Tree\bigl(\None_y(y) \Ltwo y\bigr))$ & $\leftrightarrow$ \quad \raisebox{-4pt}{$\begin{tikzpicture}[scale=.5,
			meagrecirc/.style={circle,draw, fill=black!100,thick},
			fatcirc/.style={circle,draw,thick},
			fatrect/.style = {rectangle,draw,thick},
			meagrerect/.style = {rectangle,fill=black!100,draw,thick}]
		\node[meagrecirc] (j) at (0,0) [label=right:$$] {};
		\end{tikzpicture}$} & 
		$\varrho(\Tree\bigl(\fone_y(y) \Wtwo y\bigr))$ & $\leftrightarrow$ \quad \raisebox{-4pt}{$\begin{tikzpicture}[scale=.5,
			meagrecirc/.style={circle,draw, fill=black!100,thick},
			fatcirc/.style={circle,draw,thick},
			fatrect/.style = {rectangle,draw,thick},
			meagrerect/.style = {rectangle,fill=black!100,draw,thick}]
		\node[meagrecirc] (j) at (0,0) [label=right:$$] {};
		\end{tikzpicture}$} \\
		$\varrho(\Tree\bigl(\Ntwo_y(y) \Ltwo y\bigr))$ & $\leftrightarrow$ \quad \raisebox{-4pt}{$\begin{tikzpicture}[scale=.5,
			meagrecirc/.style={circle,draw, fill=black!100,thick},
			fatcirc/.style={circle,draw,thick},
			fatrect/.style = {rectangle,draw,thick},
			meagrerect/.style = {rectangle,fill=black!100,draw,thick}]
		\node[fatcirc] (j) at (0,0) [label=right:$$] {};
		\end{tikzpicture}$} & 
		$\varrho(\Tree\bigl(\ftwo_y(y) \Wtwo y\bigr))$ & $\leftrightarrow$ \quad \raisebox{-4pt}{$\begin{tikzpicture}[scale=.5,
			meagrecirc/.style={circle,draw, fill=black!100,thick},
			fatcirc/.style={circle,draw,thick},
			fatrect/.style = {rectangle,draw,thick},
			meagrerect/.style = {rectangle,fill=black!100,draw,thick}]
		\node[fatcirc] (j) at (0,0) [label=right:$$] {};
		\end{tikzpicture}$} 
	\end{tabular}
	\end{table}
\end{example}

\FloatBarrier

Lastly, we denote by $\theta^{[\tau_1, \tau_2, \hdots \tau_k]}$ the TPS-tree with root $\theta$ and the subtrees $\tau_1, \tau_2, \hdots, \tau_k$ as the children of the root. Pictorially the tree is represented as follows:

\begin{figure}[h!]
	\centering
	\begin{tikzpicture}[scale=.5,
		meagrecirc/.style={circle,draw, fill=black!100,thick},
		fatcirc/.style={circle,draw,thick},
		fatrect/.style = {rectangle,draw,thick},
		meagrerect/.style = {rectangle,fill=black!100,draw,thick}]
	\node (j) at (0,0) [label=below:$\theta$] {};
	\node (k) at (-3,2) [label=above:$\tau_1$] {};
	\node (l) at (-2,2) [label=above:$\tau_2$] {};
	\node (m) at (0,2) [label=above:$\hdots$] {};
	\node (n) at (2,2) [label=above:$\tau_k$] {};
	\draw[-] (j) -- (k);
	\draw[-] (j) -- (l);
	\draw[-] (j) -- (n);
	\end{tikzpicture}
\end{figure}

\FloatBarrier

\begin{lemma}[Application of a function to a B-series]
	\label{lemma:application_of_function_to_bseries}
The application of a component function to a B-series is another B-series,
\begin{equation*}
	\begin{split}
h\, f^{\{m\}}\bigl(B(\mathsf{a}, y)\bigr) = B\bigl(\mathsf{b}, y\bigr),
	\end{split}
\end{equation*}
whose coefficients are formally denoted by $\mathsf{b} = f^{\{m\}}(\mathsf{a})$ and are computed as follows: 
\begin{equation*}
	f^{\{m\}}(\mathsf{a})(\tau) =  \begin{cases}
		0 &  \tau = \emptyset,\\
		1 &  \tau = \Tree\bigl(f^{\{m\}}\bigr),\\
		\mathsf{a}(\tau_1) \cdot \mathsf{a}(\tau_2) \hdots \mathsf{a}(\tau_k) & \tau = \Tree\bigl(f^{\{m\}}\bigr)^{[\tau_1, \tau_2, \hdots \tau_k]},\\
		0 & \varrho(\tau) \ne \Tree\bigl(f^{\{m\}}\bigr).
		\end{cases}
\end{equation*}
\end{lemma}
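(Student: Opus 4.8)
\noindent\emph{Proof proposal.} The plan is a standard B-series manipulation: substitute the expansion of $B(\mathsf{a},y)$ into $h\,f^{\{m\}}(\cdot)$, Taylor-expand the component function about the base point, and read off the elementary differentials together with their combinatorial weights. I would work under the standing normalization $\mathsf{a}(\emptyset)=1$, which is exactly the situation encountered in the algorithmic construction of the numerical solution, since every stage value and every update is of the form $y_n+\mathcal{O}(h)$. Under this assumption $B(\mathsf{a},y)-y=\sum_{\tau\in\mathcal{T}}\mathsf{a}(\tau)\,\tfrac{h^{|\tau|}}{\sigma(\tau)}\,F(\tau)(y)=\mathcal{O}(h)$, so the (formal) Taylor series
\[
h\,f^{\{m\}}\bigl(B(\mathsf{a},y)\bigr)=\sum_{k\ge 0}\frac{h}{k!}\,\bigl(f^{\{m\}}\bigr)^{(k)}(y)\bigl(B(\mathsf{a},y)-y,\dots,B(\mathsf{a},y)-y\bigr)
\]
is well defined as a power series in $h$.

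Next I would expand each $k$-linear term by multilinearity over ordered $k$-tuples $(\tau_1,\dots,\tau_k)$ of non-empty trees and invoke the defining property of elementary differentials, $\bigl(f^{\{m\}}\bigr)^{(k)}(y)\bigl(F(\tau_1)(y),\dots,F(\tau_k)(y)\bigr)=F(\tau)(y)$ with $\tau=\Tree\bigl(f^{\{m\}}\bigr)^{[\tau_1,\dots,\tau_k]}$ and $|\tau|=1+\sum_i|\tau_i|$. Since the only node created is the round root $\Tree(f^{\{m\}})$, every tree produced is a legitimate TPS-tree whose root is $\Tree(f^{\{m\}})$; hence the result is again a B-series, with $\mathsf{b}(\emptyset)=0$, with $\mathsf{b}(\tau)=0$ whenever $\varrho(\tau)\ne\Tree(f^{\{m\}})$, and with $\mathsf{b}(\Tree(f^{\{m\}}))=1$ read off from the $k=0$ term $h\,f^{\{m\}}(y)=h\,F(\Tree(f^{\{m\}}))(y)$. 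This already disposes of the first, second, and fourth cases of the claimed formula.

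The only real obstacle is the combinatorics behind the third case. I would fix a target tree $\tau=\Tree\bigl(f^{\{m\}}\bigr)^{[\tau_1,\dots,\tau_k]}$ with $k\ge1$, group its children into isomorphism classes of multiplicities $\mu_1,\mu_2,\dots$ (so $\sum_j\mu_j=k$), and use two facts: (i) the number of ordered $k$-tuples that collapse to the unordered tree $\tau$ is the multinomial coefficient $k!\big/\prod_j\mu_j!$; and (ii) the symmetry identity $\sigma(\tau)=\bigl(\prod_j\mu_j!\bigr)\prod_{i=1}^{k}\sigma(\tau_i)$, which follows from the description of $\mathrm{Aut}(\tau)$ as permutations of identical child-subtrees composed with automorphisms acting inside each subtree. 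Collecting the $k!\big/\prod_j\mu_j!$ equal contributions against the prefactor $h/k!$ makes the coefficient of $F(\tau)(y)$ equal to $\bigl(\prod_{i=1}^{k}\mathsf{a}(\tau_i)\bigr)\,h^{|\tau|}\big/\bigl(\prod_j\mu_j!\prod_i\sigma(\tau_i)\bigr)$, which by (ii) is $\bigl(\prod_{i=1}^{k}\mathsf{a}(\tau_i)\bigr)\,h^{|\tau|}/\sigma(\tau)$, i.e. $\mathsf{b}(\tau)=\prod_{i=1}^{k}\mathsf{a}(\tau_i)$. Finally I would observe that this product over the child multiset is independent of the ordering, so $\mathsf{b}$ is well defined, and that resumming over all $k$ and all child-multisets recovers $h\,f^{\{m\}}(B(\mathsf{a},y))=B(\mathsf{b},y)$, completing the proof.
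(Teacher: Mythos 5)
Your proposal is correct and fills in, with full detail, exactly the standard composition argument (Taylor expansion about $y$, multilinearity over ordered child tuples, and the symmetry identity $\sigma(\tau)=\bigl(\prod_j\mu_j!\bigr)\prod_i\sigma(\tau_i)$) that the paper simply outsources by citing Corollary 2 of Rainwater and Tokman. Your explicit standing normalization $\mathsf{a}(\emptyset)=1$ is the right way to make the statement precise, and it is consistent with how the lemma is actually invoked in the paper's Algorithm for the numerical solution, where $f^{\{m\}}$ is only ever applied to B-series of the form $y_n+\mathcal{O}(h)$.
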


\begin{proof}
Similar to \cite[Corollary 2]{Rainwater_2014_semilinear}.
\end{proof}

\begin{lemma}[Application of a matrix to a B-series]
\label{lemma:application_of_matrix_to_bseries}
The application of a matrix to a B-series is another B-series,
\[
h\, \Lb^{\{m\}}\bigl(B(\mathsf{a}, y)\bigr) = B(\mathsf{c}, y),
\]
whose coefficients are formally denoted $\mathsf{c} = \Lb^{\{m\}}\,\mathsf{a}$ and are computed as follows: 
\begin{equation*}
	(\Lb^{\{m\}}\mathsf{a})(\tau) =  \begin{cases}
		\mathsf{a}(\tau_1) &  \tau = \Tree\bigl(\Lb^{\{m\}}\,y\bigr)^{[\tau_1]},\\
		0 & \textnormal{otherwise}.
		\end{cases}
\end{equation*}
\end{lemma}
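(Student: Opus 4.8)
The plan is to expand both sides as B-series and match coefficients tree by tree, exploiting the linearity of the map $v \mapsto h\,\Lb^{\{m\}} v$ and the linearity of the $B^{\#}$ operator. Writing the input series out with the definition of $B$ and applying $h\,\Lb^{\{m\}}$ termwise gives
\begin{equation*}
	h\,\Lb^{\{m\}}\bigl(B(\mathsf{a},y)\bigr) \;=\; \mathsf{a}(\emptyset)\, \bigl(h\,\Lb^{\{m\}} y\bigr) \;+\; \sum_{\tau\in\mathcal{T}} \mathsf{a}(\tau)\,\frac{h^{\abs{\tau}+1}}{\sigma(\tau)}\,\Lb^{\{m\}}\, F(\tau)(y).
\end{equation*}
The first term is $\mathsf{a}(\emptyset)$ times the elementary differential of the single type-$m$ square node, which in the notation of the excerpt is $\Tree\bigl(\Lb^{\{m\}}y\bigr)^{[\emptyset]}$. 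For a general tree $\tau$, the definition of TPS elementary differentials identifies $\Lb^{\{m\}}\, F(\tau)(y)$ with the elementary differential $F(\tau')(y)$ of the tree $\tau' := \Tree\bigl(\Lb^{\{m\}}y\bigr)^{[\tau]}$ obtained by grafting $\tau$ as the unique child of a fresh type-$m$ square root node.

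Next I would verify the two bookkeeping identities needed to read off $\mathsf{c}$: first, $\abs{\tau'} = \abs{\tau}+1$, so the power of $h$ above matches the order of $\tau'$; second, $\sigma(\tau') = \sigma(\tau)$, because a singly-branched root introduces no permutations of equal subtrees into the symmetry group. Together these show that on the right-hand side the tree $\tau' = \Tree\bigl(\Lb^{\{m\}}y\bigr)^{[\tau_1]}$ carries the coefficient $\mathsf{a}(\tau_1)$ (the case $\tau_1=\emptyset$ recovering the first term), which is exactly $(\Lb^{\{m\}}\mathsf{a})(\tau')$ as claimed.

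Finally I would argue completeness and uniqueness of this correspondence. By the TPS-tree constraint that square nodes are singly branched, the grafting map $\tau_1 \mapsto \Tree\bigl(\Lb^{\{m\}}y\bigr)^{[\tau_1]}$ is a bijection from $\mathcal{T}\cup\{\emptyset\}$ onto the set of TPS-trees whose root is a type-$m$ square node, with inverse given by the root-removal operator $\RemoveRoot^{[1]}_{\Lb^{\{m\}}}$. Hence every tree occurring on the right-hand side is of the stated form and occurs exactly once, so no TPS-tree whose root is a round node or a square node of the other color receives any contribution; this gives $\mathsf{c}(\tau) = 0$ in all remaining cases, and $\mathsf{c}(\emptyset) = 0$ because the right-hand side has no $y$-component. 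Combining these establishes the case distinction in the statement.

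I expect the one step that needs genuine care to be the symmetry-factor identity $\sigma(\tau') = \sigma(\tau)$ together with the bijectivity of the grafting map; both are intuitively immediate from the singly-branched constraint on square nodes, but should be checked against the precise definition of $\sigma$ for TPS-trees, mirroring the analogous statement for P-trees in \cite[Section II.15]{Hairer_book_I}. The remainder is a routine termwise computation, entirely parallel to Lemma~\ref{lemma:application_of_function_to_bseries} and \cite[Corollary~2]{Rainwater_2014_semilinear}.
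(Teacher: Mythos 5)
Your proposal is correct and follows the same route as the paper, whose proof consists only of the one-line observation that left-multiplication by $\Lb^{\{m\}}$ shifts the coefficient of $\tau$ to the grafted tree $\Tree\bigl(\Lb^{\{m\}}\bigr)^{[\tau]}$, together with citations. You have simply filled in the supporting details (the $h$-power and symmetry-factor bookkeeping and the bijectivity of the grafting map onto trees with a type-$m$ square root), all of which check out.
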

\begin{proof}
Observe that multiplying by $\Lb^{\{m\}}$ from the left will shift the coefficients from a tree $\tau$ to a tree $\Tree\bigl(\Lb^{\{m\}}\bigr)^{[\tau]}$. Also refer to \cite{Butcher_2011_exp-order,hairer2006}.
\end{proof}

\begin{lemma}[Application of a matrix function to a B-series]
	\label{lemma:application_of_matrixfunction_to_bseries}
Let $\phi$ be an analytical function with a power series expansion:
\[
	\phi(z) = \sum_{i = 0}^{\infty} c_i\, z^i.
\]
The application of the analytical matrix function to a B-series is another B-series,
\[
\phi(h\Lb^{\{m\}})\bigl(B(\mathsf{a}, y)\bigr) = B(\mathsf{c}, y),
\]
whose coefficients are formally denoted $\mathsf{c} = \phi(h\Lb^{\{m\}})\,\mathsf{a}$ and are computed as follows: 
\begin{equation*}
	\bigl(\phi(h\Lb^{\{m\}})\mathsf{a}\bigr)(\tau) =  \begin{cases}
		\sum_{i \ge 0}c_i \cdot \mathsf{a}(\tau_{m_i}) & \tau_{m_i} = \RemoveRoot^{\{i\}}_{\Lb^{\{m\}}}(\tau),\\
		0 & \varrho(\tau) \ne \Tree\bigl(\Lb^{\{m\}}\bigr),
	\end{cases}
\end{equation*}
where $\tau_{m_i}$ is the tree obtained by removing $m$ times $\Tree\bigl(\Lb^{\{m\}}y\bigr)$ from the root position of $\tau$. The coefficients, $c_i$, come from the power series expansion of the matrix function $\phi$. For instance, if $\phi = \varphi_k$, then the coefficient $c_i = \cfrac{1}{(k+i)!}$. The summation is only over those trees $\tau_{m_i}$ where the $\RemoveRoot^{\{i\}}_{\Lb^{\{m\}}}$ function is defined. 
\end{lemma}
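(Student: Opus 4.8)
The plan is to reduce the statement to Lemma~\ref{lemma:application_of_matrix_to_bseries} by expanding $\phi$ in its power series and applying the matrix $h\Lb^{\{m\}}$ repeatedly. First I would write $\phi(h\Lb^{\{m\}}) = \sum_{i\ge 0} c_i\,(h\Lb^{\{m\}})^i$ as an operator identity and invoke the linearity of the B-series map $B(\cdot,y)$ (and of $B^{\#}$) in the coefficient argument, which gives $\phi(h\Lb^{\{m\}})\bigl(B(\mathsf a,y)\bigr) = \sum_{i\ge 0} c_i\,(h\Lb^{\{m\}})^i B(\mathsf a,y)$. The core step is then a short induction on $i$: applying Lemma~\ref{lemma:application_of_matrix_to_bseries} once shows that left multiplication by $h\Lb^{\{m\}}$ sends the B-series with coefficients $\mathsf a$ to the one whose coefficient on a tree $\tau$ equals $\mathsf a\bigl(\RemoveRoot^{\{1\}}_{\Lb^{\{m\}}}(\tau)\bigr)$ when that removal is defined (equivalently when $\varrho(\tau)=\Tree\bigl(\Lb^{\{m\}}y\bigr)$) and $0$ otherwise; iterating $i$ times stacks $i$ square $m$-nodes on the root, so $(h\Lb^{\{m\}})^i B(\mathsf a,y) = B(\mathsf a^{(i)},y)$ with $\mathsf a^{(i)}(\tau) = \mathsf a\bigl(\RemoveRoot^{\{i\}}_{\Lb^{\{m\}}}(\tau)\bigr)$ whenever $\RemoveRoot^{\{i\}}_{\Lb^{\{m\}}}$ is defined on $\tau$, and $\mathsf a^{(i)}(\tau)=0$ otherwise. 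Collecting coefficients tree-by-tree yields $\mathsf c(\tau) = \sum_{i\ge 0} c_i\,\mathsf a^{(i)}(\tau)$, which is exactly the claimed formula: the $i=0$ term always contributes $c_0\,\mathsf a(\tau)$, while for $i\ge 1$ the term is nonzero only if the root of $\tau$ is a square $m$-node, so that the case $\varrho(\tau)\ne\Tree\bigl(\Lb^{\{m\}}y\bigr)$ leaves only the $i=0$ contribution.

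Two bookkeeping points need to be checked. First, the symmetry-factor and $h$-power accounting when a square node is prepended at the root: since a square node is singly branched, attaching it introduces no new symmetry, so $\sigma$ is unchanged and the order increases by exactly one. This is precisely what Lemma~\ref{lemma:application_of_matrix_to_bseries} already encodes, so the induction invokes it as a black box rather than re-deriving it. Second, the interchange of the infinite sum over $i$ with the (infinite) sum over trees: for a fixed tree $\tau$ the removal $\RemoveRoot^{\{i\}}_{\Lb^{\{m\}}}(\tau)$ is defined only for $i$ not exceeding the length of the maximal chain of square $m$-nodes descending from the root, which is at most $\abs{\tau}$; hence for each $\tau$ only finitely many $i$ contribute, the series defining $\mathsf c(\tau)$ is a finite sum, and the manipulation is legitimate order-by-order in $h$ with no analytic convergence argument required. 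Finally I would record the specialization $c_i = 1/(k+i)!$ for $\phi=\varphi_k$, which is immediate from the series expansion~\eqref{eqn:phi-series-expansion}.

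I expect the only real obstacle to be stating the iterated-removal formula and its domain cleanly, i.e.\ making precise for which pairs $(i,\tau)$ the term $c_i\,\mathsf a\bigl(\RemoveRoot^{\{i\}}_{\Lb^{\{m\}}}(\tau)\bigr)$ is present, so that the two cases in the statement are seen to be mutually consistent and reduce to ``only $i=0$ survives'' when the root is not a square $m$-node. Everything else follows directly from Lemma~\ref{lemma:application_of_matrix_to_bseries} together with the linearity of B-series.
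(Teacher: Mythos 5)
Your proposal is correct and follows essentially the same route as the paper, whose proof consists precisely of invoking Lemma~\ref{lemma:application_of_matrix_to_bseries} together with the linearity of B-series (citing \cite[\S 4]{butcher2009}); you have simply written out the iteration and the tree-by-tree finiteness argument that the paper leaves implicit.
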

\begin{proof}
By applying \ref{lemma:application_of_matrix_to_bseries}, and using the linearity of B-series and the theory from \cite[\S 4]{butcher2009}.
\end{proof}

%%%%%%%%%%%%%%%%%%%%%%%%%
\subsection{B-series of the numerical solution}
%%%%%%%%%%%%%%%%%%%%%%%%%

B-series of the numerical solution can be obtained by starting with the B-series expansion for the true solution at the current time instant, $y_n$, and stepping through an algorithmic procedure that mimics the numerical method for which the B-series is being constructed. Earlier works \cite{Sandu_2014_expK,Sandu_2019_EPIRKW} by the authors demonstrate this procedure. In Algorithm \ref{alg:PEPIRKW_b_series} we show how this is done for PEPIRKW. Similarly, we derive the B-series of the numerical solution for each of the other methods discussed.

\begin{remark}
	We truncate each of the B-series and its operations up to the order of the method that we are interested in building. In this work, we only consider truncated operations for up to order three corresponding. The corresponding TPS-trees are shown in %
	\iflong
	Tables \ref{table:tps_trees1}--\ref{table:tps_trees11} of \ref{sec:TPS}. 
	\else
	{\cite[Tables F1--F11]{narayanamurthi2019}. }
	\fi
\end{remark}

\begin{remark}
	The B-series coefficient of the true solution at the current time instant, $y_n$, has zeros in front of each TPS-tree, and a one in front of the empty tree, $\emptyset$.  
\end{remark}

\begin{algorithm}[tbh!]
	\caption{Computation of the B-series of PEPIRKW numerical solution}
	\label{alg:PEPIRKW_b_series}
	\begin{algorithmic}[1]
	\State  {\bf Input:} $\mathsf{y_{n}}$\Comment{B-series coefficient of the current numerical solution.}
	\For{$i=1:\max[s^{\{.\}}] - 1$}{}{}\Comment{Stage Index: 
	Do for the largest number of stages}
	\For{$q=1:P$}{}{}\Comment{Partition Index: 
	For each stage and partition do the following}
	\State $\mathsf{u} = 0$
	\For{$m=1:P$}
	\State $ \mathsf{v} = B^{\#}(h\,\mbf{f}^{\{m\}}(B(\mathsf{y_{n}},y)))$\Comment{Composition of $f$ with B-series of the current solution.}
	\State $ \mathsf{v} = B^{\#}(\boldsymbol{\psi}_{i,1}^{\{q, m\}}(h\,g_{i,1}^{\{q, m\}}\,\,\Wb^{\{m\}})\,\cdot\,B(\mathsf{v}, y))$\Comment{Multiplication by $\boldsymbol{\psi}$ function}
	\State $\mathsf{u} = \mathsf{u} + a_{i,1}^{\{q, m\}} * \mathsf{v}$\Comment{Scaling by a constant and add}
	\EndFor
	\For{$m=1:P$}
	\For{$j=2:i_Y^{\{m\}}$}{}{}
	\State $ \mathsf{v} = B^{\#}(h\, \Delta^{(j-1)}_{Y^{\{q\}}}\mbf{f}^{\{m\}}(\mathsf{y_{n}}))$\Comment{Recursive forward-difference}
	\State $ \mathsf{v} = B^{\#}(\boldsymbol{\psi}_{i,j}^{\{q, m\}}(h\,g_{i,j}^{\{q, m\}}\,\Wb^{\{m\}})\,\cdot\,B(\mathsf{v}, y))$
	\State $ \mathsf{u} = \mathsf{u} + a_{i,j}^{\{q, m\}}\, * \mathsf{v}$
	\EndFor
	\EndFor
	\State $ \mathsf{\mbf{Y}_i^{\{q\}}} = \mathsf{y_{n}} + \mathsf{u}$\Comment{Addition of two B-series}
	\EndFor
	\EndFor
	\State $\mathsf{u} = 0$
	\For{$m=1:P$}
	\State $ \mathsf{v} = B^{\#}(h\,\mbf{f}^{\{m\}}(B(\mathsf{y_{n}},y)))$
	\State $ \mathsf{v} = B^{\#}(\boldsymbol{\psi}_{s^{\{m\}},1}^{\{m,m\}}(h\,g_{s^{\{m\}},1}^{\{m,m\}}\,\Wb^{\{m\}})\,\cdot\,B(\mathsf{v}, y))$
	\State $ \mathsf{u} = \mathsf{u} + b_{1}^{\{m\}}\, * \mathsf{v}$
	\EndFor
	\For{$m=1:P$}
	\For{$j=2:s^{\{m\}}$}
	\State $ \mathsf{v} = B^{\#}(h\, \Delta^{(j-1)}_{Y^{\{m\}}}\mbf{f}^{\{m\}}(\mathsf{y_{n}}))$
	\State $ \mathsf{v} = B^{\#}(\boldsymbol{\psi}_{s^{\{m\}},j}^{\{m,m\}}(h\,g_{s^{\{m\}},j}^{\{m,m\}}\,\Wb^{\{m\}})\,\cdot\,B(\mathsf{v}, y))$
	\State $ \mathsf{u} = \mathsf{u} + b_{j}^{\{m\}}\, * \mathsf{v}$
	\EndFor
	\EndFor
	\State $ \mathsf{y_{n+1}} = \mathsf{y_{n}} + \mathsf{u}$
	\State  {\bf Output:} $\mathsf{y_{n+1}}$\Comment{B-series coefficient of the next step numerical solution.}
	\end{algorithmic}
\end{algorithm}

%%%%%%%%%%%%%%%%%%%%%%%%%
\subsection{Order conditions for partitioned exponential methods}
%%%%%%%%%%%%%%%%%%%%%%%%%

Order conditions are constructed for each method by equating the coefficients of the B-series expansion $\mathsf{y}_{n+1}$ of the numerical solution $y_{n + 1}$ to thoset of the exact solution $y(t_n + h)$ up to the desired order of accuracy. 
\begin{theorem}[Order conditions]
	\label{thm:order_conditions}
A partitioned exponential method of W type has order $p$ only if it satisfies the conditions:
\[
	\mathsf{y}_{n+1}(\tau) = 1/\gamma_{\textsc{w}}(\tau)\quad \forall \tau \in \;\textnormal{TPS-trees with}\; \abs{\tau} \leq p.
\]
A partitioned exponential method of split-RHS type has order $p$ only if it satisfies the conditions:
\[
	\mathsf{y}_{n+1}(\tau) = 1/\gamma_{\textsc{s}}(\tau)\quad \forall \tau \in \;\textnormal{TPS-trees with}\; \abs{\tau} \leq p.
\]
\end{theorem}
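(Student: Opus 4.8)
The plan is to prove this as a direct consequence of the general B-series framework for order conditions, following the classical template of Hairer--N{\o}rsett--Wanner.  First I would recall the defining feature of a B-series: the numerical solution produced by one step of any of our partitioned-exponential schemes (PEXPW or PEPIRKW), starting from $y_n$, admits an expansion $y_{n+1} = B(\mathsf{y}_{n+1}, y_n)$ indexed by TPS-trees, whose coefficients are computed by the algorithmic procedure exemplified in Algorithm~\ref{alg:PEPIRKW_b_series}.  This is guaranteed because every operation used to build a stage or the update --- application of a component function $f^{\{m\}}$ to a B-series (Lemma~\ref{lemma:application_of_function_to_bseries}), multiplication by a matrix $\Lb^{\{m\}}$ or $\Wb^{\{m\}}$ (Lemma~\ref{lemma:application_of_matrix_to_bseries}), application of a $\varphi_k$ or $\Psi$ matrix function (Lemma~\ref{lemma:application_of_matrixfunction_to_bseries}), scalar multiplication, and addition --- maps B-series to B-series.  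Since the class of B-series over the TPS-tree basis is closed under exactly these operations, a straightforward induction over the stage index and partition index shows $\mathsf{Y}_i^{\{q\}}$ and finally $\mathsf{y}_{n+1}$ are well-defined B-series coefficient functions on TPS-trees; this is the content that Algorithm~\ref{alg:PEPIRKW_b_series} makes explicit.

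Next I would invoke the B-series of the exact solution.  By the discussion preceding the theorem, the exact solution of \eqref{eqn:functional_splitting_ode_system}--\eqref{eqn:linearized_rhs_funcs} also has a B-series expansion, $y(t_n+h) = B(\mathsf{e}, y_n)$, whose coefficients are $\mathsf{e}(\tau) = 1/\gamma_{\textsc{w}}(\tau)$ in the W interpretation and $\mathsf{e}(\tau) = 1/\gamma_{\textsc{s}}(\tau)$ in the split-RHS interpretation.  These are obtained by taking the classical Taylor/B-series expansion of the exact solution over ordinary (bicolored) T-trees and then refining each elementary differential according to the chosen node interpretation (linear vs.\ nonlinear part for split-RHS; $\Wb$ vs.\ component function for W), exactly as carried out for the mixed second-order terms in the proof of Lemma~\ref{lemma:drawback_of_sepirk_method}.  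The key structural fact, to be stated cleanly, is that distinct TPS-trees correspond to distinct (linearly independent) elementary differentials over a generic right-hand side, so that two B-series agree through order $p$ as functions of $y_n$ if and only if their coefficients agree on all TPS-trees $\tau$ with $\abs{\tau}\le p$.

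With these two ingredients the theorem follows immediately: the local error is $y_{n+1} - y(t_n+h) = B(\mathsf{y}_{n+1} - \mathsf{e}, y_n)$, and by the linear independence of elementary differentials this difference is $\mathcal{O}(h^{p+1})$ precisely when $\mathsf{y}_{n+1}(\tau) = \mathsf{e}(\tau)$ for every TPS-tree with $\abs{\tau}\le p$.  Substituting $\mathsf{e}(\tau) = 1/\gamma_{\textsc{w}}(\tau)$ or $1/\gamma_{\textsc{s}}(\tau)$ according to the method type gives the two displayed sets of conditions, and the ``only if'' in the statement is exactly the implication order $p$ $\Rightarrow$ matching coefficients.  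I would note explicitly that the argument is identical for PEXPW and PEPIRKW because both are built from the same closed set of B-series operations; the only difference is which TPS-trees carry nonzero exact-solution coefficients (trees ending in a square node are zero in the W case).

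The main obstacle, and the step deserving the most care, is the well-definedness and correctness of the numerical B-series --- i.e., verifying that the informal ``run the method on B-series'' procedure in Algorithm~\ref{alg:PEPIRKW_b_series} genuinely reproduces the Taylor expansion of the true numerical solution, including the subtle points that the forward-difference operators $\Delta^{(j-1)}$ act consistently on B-series, that the $\Psi$ functions expanded via \eqref{eqn:psi_definition} and \eqref{eqn:phi-series-expansion} inherit the matrix-function-to-B-series rule of Lemma~\ref{lemma:application_of_matrixfunction_to_bseries}, and that truncation at order $p$ does not lose any contributing term.  Everything else --- linear independence of elementary differentials and the exact-solution coefficients --- is standard and can be cited from \cite{Hairer_book_I,butcher2009,hairer2006}; I would keep the proof short by deferring those classical facts and concentrating the exposition on the inductive B-series bookkeeping specific to the partitioned-exponential structure.
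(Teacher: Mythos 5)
Your proposal is correct and follows essentially the same route as the paper, which states the theorem as a direct consequence of the preceding machinery (the B-series of the numerical solution built via Algorithm~\ref{alg:PEPIRKW_b_series} and Lemmas~\ref{lemma:application_of_function_to_bseries}--\ref{lemma:application_of_matrixfunction_to_bseries}, the exact-solution coefficients $1/\gamma_{\textsc{w}}$ and $1/\gamma_{\textsc{s}}$ from the TPS-tree tables, and the standard comparison of coefficients via independence of elementary differentials) without writing out a separate formal proof. Your version simply makes explicit the standard B-series bookkeeping that the paper leaves implicit.
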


The order conditions for a three-stage formulation of each type of method  discussed herein are given in
\iflong
\ref{sec:order-conditions-3stage}. 
\else
{\cite[Appendix E]{narayanamurthi2019}. }
\fi

\begin{remark}
%While the proof of correctness of the B-series operations for these methods is not discussed at length, the readers can consult 
A discussion of closely related B-series concepts can be found in \cite{Butcher_2011_exp-order,butcher2009,Rainwater_2014_semilinear,Tokman_2011_EPIRK}. 
\end{remark}

%%%%%%%%%%%%%%%%%%%%%%%%%%%%%%%%%%%%%%%

%%%%%%%%%%%%%%%%%%%%%%%%%%%%%%%%%%%%%%%
% !TEX root = Nonstiff_pexpw.tex
%%%%%%%%%%%%%%%%%%%%%%%%%%%%%%%%%%%%%%%
\section{Construction  of Third Order Schemes}
\label{sec:construction}
%%%%%%%%%%%%%%%%%%%%%%%%%%%%%%%%%%%%%%%

We build three-stage third-order PEXPW and PEPIRKW methods, whose formulations were discussed in Sections \ref{sec:PEXPW} and \ref{sec:PEPIRKW}, respectively. PEPIRKW methods with three stages of higher than third-order cannot be constructed, as the corresponding unpartitioned methods EPIRKW \cite[Sec. 3.2]{Sandu_2019_EPIRKW}, themselves cannot achieve it. 

Construction of methods from the two families begins with first formulating the algebraic equations corresponding to order conditions of Theorem \ref{thm:order_conditions}. This is done automatically using Mathematica and running the Algorithm \ref{alg:PEPIRKW_b_series} for PEPIRKW methods, and its exponential-Rosenbrock counterpart for PEXPW methods, to compute the B-series coefficients of the numerical solutions, and then equating them to the B-series coefficients of the exact solution. Next, we use Mathematica to solve the nonlinear equations for numerical values of the method coefficients.

Third-order coefficients for two PEXPW methods with second-order embedded schemes are given in \ref{sec:coefficients-PEXPW} as Method \ref{method:pexpw3a_coefficients} and \ref{method:pexpw3b_coefficients}, respectively. Although one can obtain third-order methods with three stages in each partition, we increased the stage count of the second partition to four as it was not possible to build an embedded method without increasing the degrees of freedom. Also notice that the coupling between the stages in the two methods happens via the $u_i^{\{j\}}$ stages.

The coefficients of two third order PEPIRKW methods are given in \ref{sec:coefficients-PEPIRKW} as Method \ref{method:pepirkw3a_coefficients} and Method \ref{method:pepirkw3b_coefficients}, respectively. Each of the methods has an embedded second order scheme for error estimation and step size control.

Finally, coefficients of a third order partitioned sEPIRK methods based on averaging are given in
\iflong
\ref{sec:coefficients-PSEPIRK} as Method \ref{method:psepirkb_coefficients}. 
\else
{\cite[Appendix D]{narayanamurthi2019}.} 
\fi
While of theoretical interest, methods of this type perform poorly in practice for stiff systems, and we will not pursue them further.

%%%%%%%%%%%%%%%%%%%%%%%%%%%%%%%%%%%%%%%

%%%%%%%%%%%%%%%%%%%%%%%%%%%%%%%%%%%%%%%
% !TEX root = Nonstiff_pexpw.tex
%%%%%%%%%%%%%%%%%%%%%%%%%%%%%%%%%%%%%%%
\section{Implementation Considerations}
\label{sec:implementation}
%%%%%%%%%%%%%%%%%%%%%%%%%%%%%%%%%%%%%%%

Computing the action of matrix-exponential like functions on vectors constitutes the bulk of the computational cost of exponential time integrators \cite{Tokman_2011_EPIRK,Rainwater_2014_semilinear}. In the case of large systems of ODEs that arise naturally from the semi-discretization of PDEs, Krylov-subspace based methods are the de-facto choice for efficiently computing these products \cite{Hochbruck_1997_exp,niesen2012,Sidje_1998}. To evaluate $\varphi(h\gamma A) b$ or $\Psi\bigl(h\gamma A\bigr) b$, an $M$-dimensional Krylov-subspace,  ${\mathcal{K}_M = \textnormal{span}\{b, A b, \dots, A^{M - 1} b\}}$, is built using Arnoldi/Lanczos iterations. The by-product of  Arnoldi/Lanczos is an orthonormal matrix, $V_M$, which spans the Krylov-subspace, $\mathcal{K}_M$, and an upper Hessenberg/tridiagonal matrix, $H_M$. Then, $\varphi_k(h\gamma A) b$ can be approximated as $\varphi_k(h\gamma A) b \approx \norm{b} V_M \varphi_{k}(h \gamma H_M) e_1$ \cite{Sidje_1998,Sandu_2014_expK,Sandu_2019_EPIRKW}.

The $\Psi$ matrix-vector products can be obtained by computing the individual $\varphi$ products and taking a linear combination of them. We, alternatively, follow the `Expokit' strategy \cite{Sidje_1998} of constructing an augmented matrix around $H_M$, and exponentiating it to get approximations of $\varphi_k(h \gamma H_M) e_1$ products for a range of $k$ values as columns of the resultant matrix. Multiplying the columns of this matrix by $V_M$, rescaling by $\norm{b}$, and taking linear combinations using $p_{j,k}$ coefficients as weights \eqref{eqn:psi_definition} yields an approximation for $\Psi\bigl(h \gamma A\bigr) b$. We briefly discuss a number of factors that we have taken into consideration in the computation of these matrix-exponential like functions.

\paragraph{Adaptivity in Arnoldi/Lanczos iterations} 

%\textcolor{red}{You check accuracy of $\varphi_1(\Lone) u$ or $\varphi_1(\Ltwo) v$? What vectors are these matrices applied to when you check residuals? Please clarify.}
%\mahesh{I have not implemented it the way you are thinking (specific to each partition). Everytime we want to compute $\Psi \cdot v$, I compute the subspace for $\varphi_1$ only. Higher order $\varphi$ functions converge faster than $\varphi_1$ and hopefully are computable with the same subspace as for $\varphi$ (as the weights in the denominator of the series expansion become progressively larger \eqref{eqn:phi-series-expansion}). I have clarified this information better below.}

We build the Krylov-subspace adaptively by bounding the error between the quantity $\varphi_1(h \gamma A) v$ and its approximation using the computed Krylov-subspace, $V_M \varphi_1(h \gamma H_M) e_1$, i.e., ${s_M = \varphi_1(h \gamma A) v - V_M \varphi_1(h \gamma H_M) e_1}$ (see \cite{saad1992}). The size of the Krylov-subspace, $M$, is chosen so that the error, $s_M$, is under a certain tolerance. Since higher order $\varphi_k$ functions must converge faster than $\varphi_1$, because the weights in the denominator of the series expansion of $\varphi_k$ become progressively larger, according to equation \eqref{eqn:phi-series-expansion}, with increase in order $k$, we use the same subspace as obtained from bounding the error $s_M$ to approximate $\varphi_k(h \gamma A) v$. When a $\Psi$ function product needs to be computed, we build the Krylov-subspace by bounding $s_M$ and use the resultant upper-Hessenberg/tridiagonal matrix, $H_M$, to construct an augmented matrix in line with Sidje's `Expokit' strategy \cite{Sidje_1998} and evaluate all the $\varphi_k$ products at once. 

The dimension $M$ of the Krylov-subspace is chosen by ensuring that the error, $s_M$, is below $10^{-12}$ for fixed timestep experiments, and the solution tolerance for adaptive timestep experiments. Since computing the error, $s_M$, involves a matrix-exponential operation, we only compute it at certain pre-determined indices (of which the first sixteen are $[1,\allowbreak 2,\allowbreak 3,\allowbreak 4,\allowbreak 6,\allowbreak 8,\allowbreak 11,\allowbreak 15,\allowbreak 20,\allowbreak 27,\allowbreak 36,\allowbreak 46,\allowbreak 57,\allowbreak 70,\allowbreak 85,\allowbreak 100]$) such that the cost of computing the error equals the cost of all previous computations in the same projection \cite[Section 6.4]{Hochbruck_1998_exp}. 

We can speedup the adaptive Arnoldi/Lanczos process by passing in the size of the Krylov-subspace from the previous timestep and using this information to reduce the total number of error computations per projection. We compute the errors only from the index that just precedes the subspace size passed in from the previous timestep allowing us to shrink the subspace size if needed while simultaneously keeping the total number of computations low. Initial experiments reveal that a $\sim 20\%$ cost savings in overall time-integration process is possible. We can extend this idea to subsequent projections between stages or within one stage of the same timestep.  

\paragraph{Minimizing the $g$ coefficients} 

The $g$ coefficients that scale the matrix argument of $\Psi$ in the EPIRK family of methods has to be chosen as small as possible for efficiency reasons \cite{Tokman_2011_EPIRK,Rainwater_2014_semilinear}. To this end, we optimized the undetermined $g$ coefficients after obtaining a family of solutions for the PEPIRKW3 methods (both A and B). We did not optimize all the coefficients of PEXPW3 methods as the order conditions were significantly harder to solve with $\gamma$ values not assumed. 

\paragraph{Computational optimizations for reaction-diffusion systems}

The computation of matrix-exponential-like functions can be further optimized for reaction-diffusion PDE systems. Let $\Jb$ be the full-Jacobian; $\Jb_D$, the Jacobian of the diffusion part; $\Jb_R$, the Jacobian of the reaction part; $\Jb_R^P$, the Jacobian of the reaction part with the variables permuted to give the reaction Jacobian a block diagonal structure. For a two species reaction-diffusion system, with the state variables ordered first by grid location and second by species, the structure of the Jacobians is pictured in Figure~\ref{fig:Jacobian_operators_structure}. The permuted Jacobian $\Jb_R^P$, where state variables are are ordered first by  species and second by locations, has a block diagonal structure.

\begin{figure*}[htb!]
	\centering
	\begin{subfigure}[t]{0.49\textwidth}
		\centering
		\frame{\includegraphics[scale=0.55]{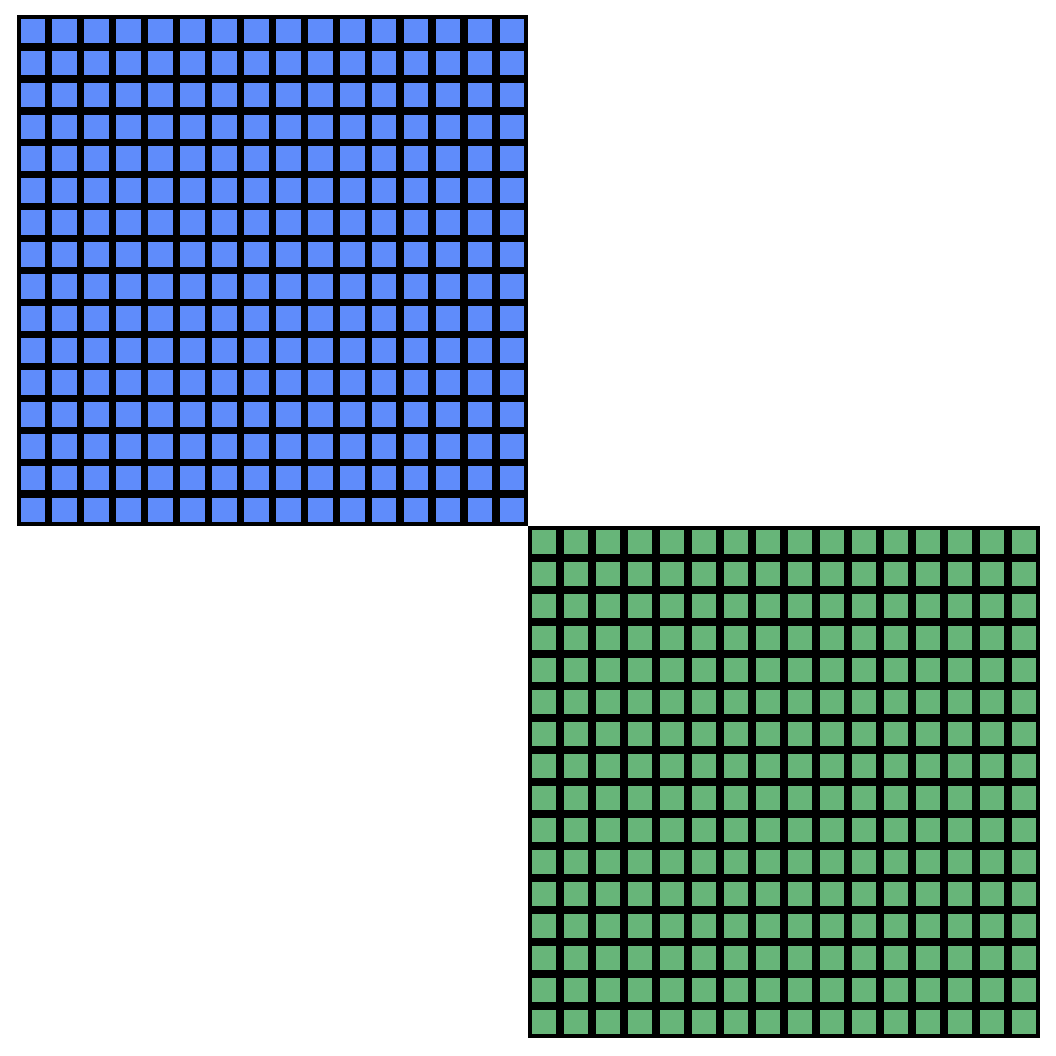}}
		\caption{Diffusion Jacobian ($\Jb_D$)}
	\end{subfigure}
	~
	\begin{subfigure}[t]{0.49\textwidth}
		\centering
		\frame{\includegraphics[scale=0.55]{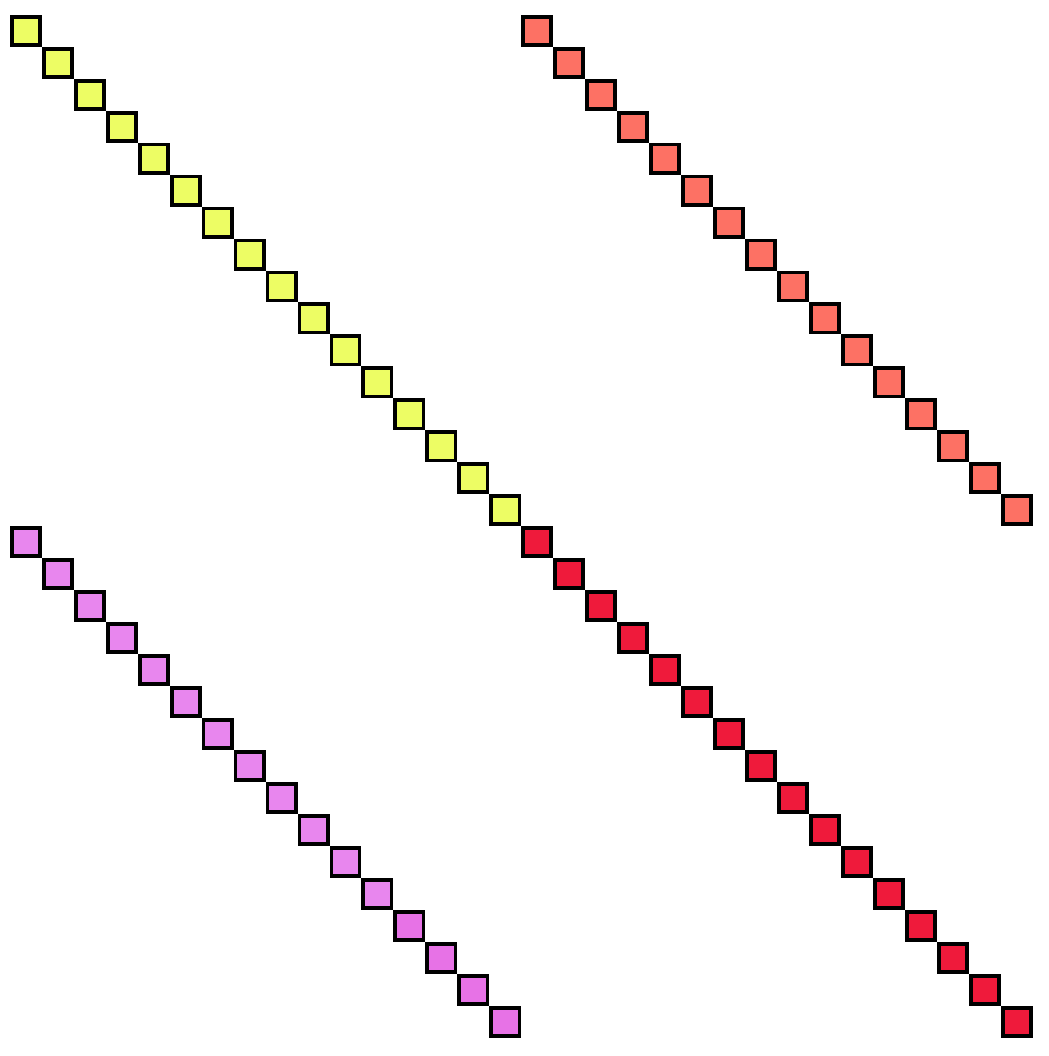}}
		\caption{Reaction Jacobian ($\Jb_R$)}
	\end{subfigure}
	\vskip\baselineskip
	\begin{subfigure}[t]{0.49\textwidth}
		\centering
		\frame{\includegraphics[scale=0.55]{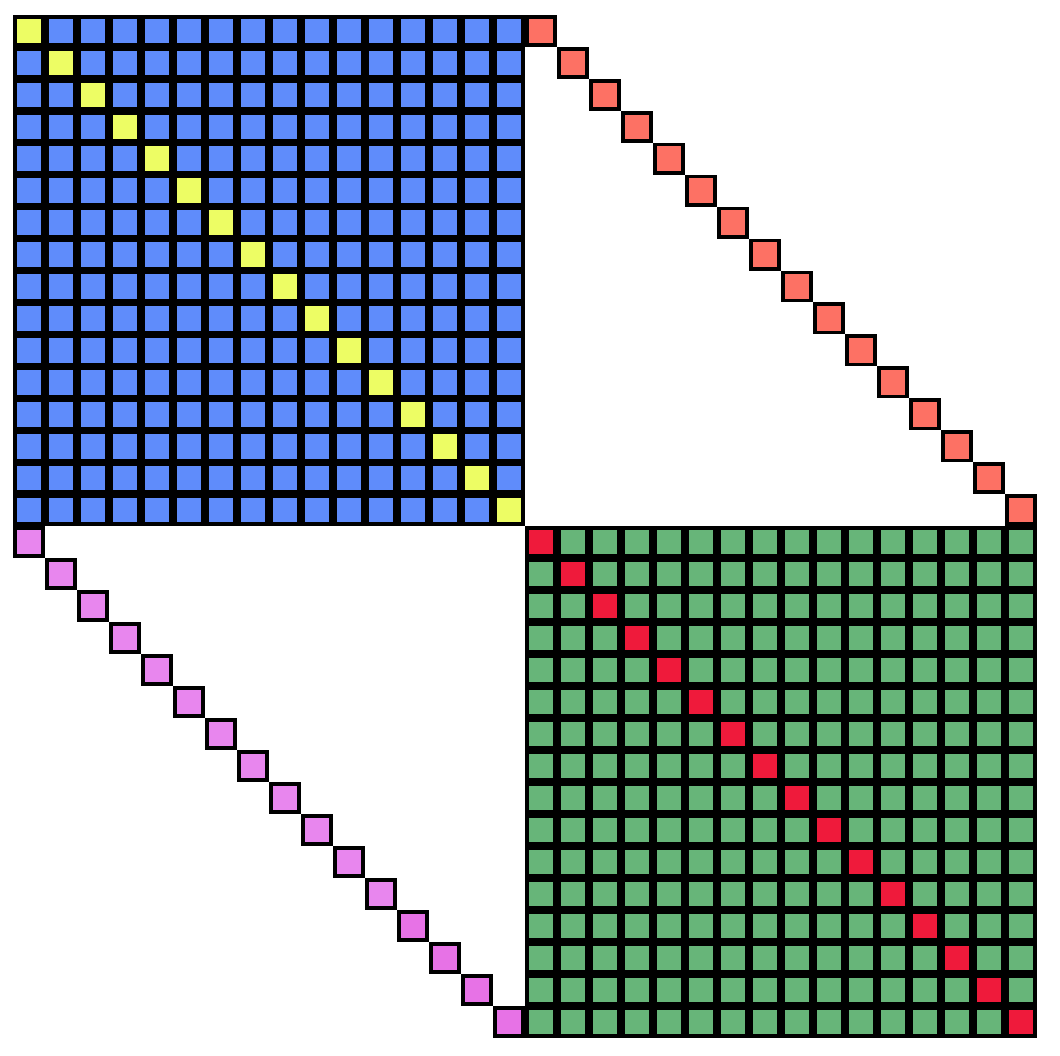}}
		\caption{Diffusion + Reaction Jacobian ($\Jb$)}
	\end{subfigure}
	~
	\begin{subfigure}[t]{0.49\textwidth}
		\centering
		\frame{\includegraphics[scale=0.55]{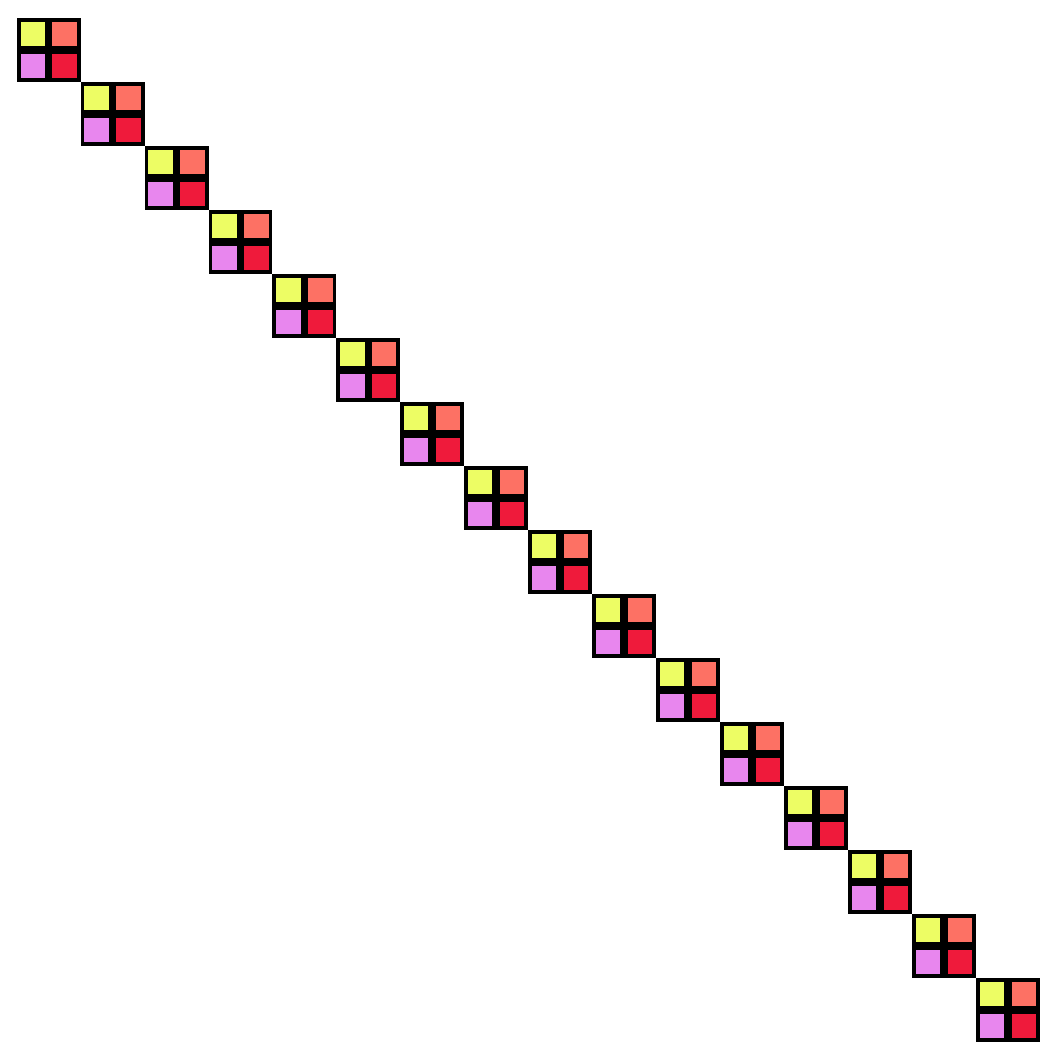}}
		\caption{Permuted Reaction Jacobian ($\Jb_R^P$)}
	\end{subfigure}
	\caption{Structure of the Jacobians of the different operators in a two-species reaction-diffusion system. Note that the each of the blocks of the diffusion Jacobian has (an almost) sparse diagonal structure with the number of diagonals dependent on the spatial discretization.\label{fig:Jacobian_operators_structure}}
\end{figure*}

We can then approximate the Jacobian of reaction-diffusion systems by choosing from any of the four components in Figure \ref{fig:Jacobian_operators_structure}. There are benefits to choosing approximations which have a block diagonal structure -- i) evaluating matrix functions on them will not result in fill-ins, ii) blocks can be separated into groups, with each group having one or more blocks, and the matrix functions can be computed on groups in parallel. These properties do not hols when the matrix functions are applied to the full Jacobian.

For partitioned-exponential methods, we partition the right-hand side into the diffusion part and all of the rest, which may include forcing terms along with the reaction part. We use the Jacobian of each part as the argument of $\varphi$ and $\Psi$ functions for split and W-methods in their respective partitions. Later, we demonstrate using numerical experiments that there are computational advantages to such a partitioning. 

\paragraph{Reordered Jacobians} Renumbering the variables via a permutation matrix $P$ changes the Jacobian (here, the reaction Jacobian) as follows:
\begin{equation}
	\label{eqn:relationship_between_permuted_reaction_jacobian_and_unpermuted}
	\Jb_R^P = P^{T} \,\Jb_R\, P.
\end{equation} 
%
%One can parallelize the computations of $\varphi$ on $J_R^P$ by grouping blocks together. 

In many applications it is computationally convenient to evaluate $\varphi$ and $\Psi$ functions on a component Jacobian using one ordering of variables, and to evaluate them on another component Jacobian using a different ordering. For example, for reaction diffusion systems, the matrix functions work efficiently on the permuted reaction Jacobian $\Jb_R^P$, but on the diffusion Jacobian $\Jb_D$ with the original ordering. The following lemma provides an elegant way to accommodate different permutations.

\begin{lemma}
\label{lemma:varphi_k_on_reaction_jacobian}
The result of applying the matrix functions $\varphi_k$ to the original Jacobian can be obtained by applying the function to the permuted Jacobian, and permuting back the rows and columns of the result:
\[
\varphi_k(h \gamma \Jb_R) = P \varphi_k(h \gamma \Jb_R^P) P^T, \qquad \forall k \geq 0.
\]
\end{lemma}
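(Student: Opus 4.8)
The plan is to reduce the claim to the elementary algebraic fact that an analytic matrix function commutes with similarity transformations by a permutation (orthogonal) matrix. First I would recall the defining relation \eqref{eqn:relationship_between_permuted_reaction_jacobian_and_unpermuted}, namely $\Jb_R^P = P^{T}\Jb_R P$, and the fact that $P$ is a permutation matrix, so $P^{T}P = P P^{T} = \I$; in particular $P^{-1} = P^{T}$. Then the substitution $A := h\gamma\Jb_R$ makes the statement a special case of the general identity $\varphi_k(P^{T}AP) = P^{T}\varphi_k(A)P$, valid for any matrix $A$ and any function $\varphi_k$ admitting a convergent power series, which is exactly the setting here since by \eqref{eqn:phi-series-expansion} each $\varphi_k$ has the entire power series $\varphi_k(z)=\sum_{i\ge0} z^i/(k+i)!$.

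The key step is a telescoping-of-powers computation: for every integer $i\ge 0$,
\[
(P^{T}AP)^{i} = \underbrace{(P^{T}AP)(P^{T}AP)\cdots(P^{T}AP)}_{i\ \text{factors}} = P^{T}A^{i}P,
\]
because each adjacent pair $P\,P^{T}$ collapses to the identity, and $(P^{T}AP)^{0}=\I=P^{T}\I P$ trivially. Substituting into the power series and using linearity,
\[
\varphi_k(P^{T}AP) = \sum_{i\ge 0}\frac{(P^{T}AP)^{i}}{(k+i)!} = \sum_{i\ge 0}\frac{P^{T}A^{i}P}{(k+i)!} = P^{T}\!\left(\sum_{i\ge 0}\frac{A^{i}}{(k+i)!}\right)\!P = P^{T}\varphi_k(A)\,P.
\]
Finally, multiply on the left by $P$ and on the right by $P^{T}$ and use $PP^{T}=\I$ to obtain $P\,\varphi_k(P^{T}AP)\,P^{T} = \varphi_k(A)$; re-substituting $A = h\gamma\Jb_R$ and $P^{T}AP = h\gamma\Jb_R^P$ gives $\varphi_k(h\gamma\Jb_R) = P\,\varphi_k(h\gamma\Jb_R^P)\,P^{T}$, which is the assertion, for all $k\ge 0$.

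The only point requiring a word of care — and the mildest of obstacles — is the interchange of the (infinite) sum with the conjugation by $P$; this is justified because conjugation $X\mapsto P^{T}XP$ is a bounded linear operator on the finite-dimensional space of matrices (indeed norm-preserving, as $P$ is orthogonal), so it commutes with the convergent series, the latter convergence being guaranteed by the analyticity of $\varphi_k$ on all of $\mathbb{C}$. An alternative that sidesteps series convergence altogether is to invoke the Cauchy integral / spectral definition of $\varphi_k$ applied to a matrix and note that $P^{T}AP$ and $A$ are similar, hence have the same spectrum and Jordan structure; either route yields the identity. I would present the power-series argument as the primary proof since the $\varphi_k$ here are explicitly entire, and remark on the boundedness of the conjugation map to close the interchange rigorously.
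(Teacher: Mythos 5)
Your proof is correct and follows essentially the same route as the paper's: both substitute the power series expansion of $\varphi_k$ and use $P P^T = P^T P = \I$ to telescope the powers of the conjugated matrix. The added remarks on interchanging the series with conjugation and the alternative spectral argument are fine but not needed beyond what the paper already does.
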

\begin{proof}
We know that $P$ is a permutation matrix. Therefore, $P P^T = P^T P = I$. We also know that the $\varphi_k$ function has a power series expansion: 
\[\varphi_k(z) = \sum_{l = 0}^{\infty} \frac{z^l}{(k + l)!}. \]
Since $\Jb_R^P = P^T \Jb_R P$, pre- and post-multiplying by $P$ and $P^T$ respectively, we get $\Jb_R = P \Jb_R^P P^T$. Plugging in the expression for $\Jb_R$ in the power series expansion of $\varphi_k(h \gamma \Jb_R)$ and using the fact that $P P^T = P^T P = I$, we get the desired result.
\end{proof}

\begin{corollary}
\label{corollary:Psi_on_reaction_jacobian}
The same permute, evaluate, and permute back sequence of operations holds for the $\Psi$ functions:
$\Psi\bigl(h \gamma \Jb_R\bigr) = P \Psi\bigl(h \gamma \Jb_R^P\bigr) P^T$
\end{corollary}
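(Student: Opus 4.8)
The plan is to reduce Corollary~\ref{corollary:Psi_on_reaction_jacobian} to Lemma~\ref{lemma:varphi_k_on_reaction_jacobian}, which has just been established, by exploiting the linearity of the $\Psi$ functions in terms of the $\varphi_k$ building blocks. Recall from \eqref{eqn:psi_definition} that each $\Psi_j$ is a finite linear combination $\Psi_j(z)=\sum_{k=1}^{j} p_{j,k}\,\varphi_k(z)$ with scalar coefficients $p_{j,k}$. The only algebraic facts needed are that matrix-valued maps of the form $X\mapsto P X P^{T}$ are linear, and that they commute with scalar multiplication.

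First I would write $\Psi\bigl(h\gamma\Jb_R\bigr)=\sum_{k=1}^{j} p_{j,k}\,\varphi_k\bigl(h\gamma\Jb_R\bigr)$. Next I would substitute the identity $\varphi_k(h\gamma\Jb_R)=P\,\varphi_k(h\gamma\Jb_R^P)\,P^{T}$ from Lemma~\ref{lemma:varphi_k_on_reaction_jacobian}, valid for every $k\ge 0$ and in particular for $k=1,\dots,j$, into each term of the sum. Then I would factor $P$ on the left and $P^{T}$ on the right out of the (finite) summation, using that $P$ and $P^{T}$ do not depend on $k$:
\[
\Psi\bigl(h\gamma\Jb_R\bigr)=\sum_{k=1}^{j} p_{j,k}\,P\,\varphi_k\bigl(h\gamma\Jb_R^P\bigr)\,P^{T}
= P\left(\sum_{k=1}^{j} p_{j,k}\,\varphi_k\bigl(h\gamma\Jb_R^P\bigr)\right) P^{T}
= P\,\Psi\bigl(h\gamma\Jb_R^P\bigr)\,P^{T},
\]
where the last equality again invokes the definition \eqref{eqn:psi_definition}, now applied to the permuted Jacobian $\Jb_R^P$. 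This completes the argument.

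There is essentially no obstacle here: the statement is a one-line consequence of the linearity of the permutation-conjugation operation together with the already-proven scalar-by-scalar result for the $\varphi_k$ functions. The only point requiring (minor) care is bookkeeping of the index set and of the subscripts on $\Psi$ — one should make sure the same subscript pair $(i,j)$ (and hence the same weights $p_{j,k}$) appears on both sides, since $\Psi_{i,j}$ in the formulations carries stage and partition indices; but these indices are inert parameters in the conjugation identity, so they come along for the ride. One could also remark that the identity extends verbatim to the $\hat{\Psi}$ functions and to any analytic matrix function admitting a power series expansion, since Lemma~\ref{lemma:varphi_k_on_reaction_jacobian} itself only used the power-series structure of $\varphi_k$.
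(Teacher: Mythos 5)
Your argument is correct and is exactly the paper's proof: the corollary follows from Lemma \ref{lemma:varphi_k_on_reaction_jacobian} together with the fact that $\Psi$ is a finite linear combination of $\varphi_k$ functions, so conjugation by $P$ distributes over the sum. You have simply written out the one-line justification in full detail.
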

\begin{proof}
Result follows from Lemma \ref{lemma:varphi_k_on_reaction_jacobian} and noting that $\Psi$ functions are linear combinations of $\varphi_k$ functions.
\end{proof}

To avoid repeated multiplications by permutation matrices and their transposes, one constructs the permuted reaction Jacobian $\Jb_R^P$ directly. To compute the action of $\varphi_k(h \gamma \Jb_R)$ or $\Psi\bigl(h \gamma \Jb_R\bigr)$ on a vector $v$, one only permutes vectors, which is significantly more efficient than permuting both the rows and columns of matrices:
\begin{equation}
	\label{eqn:arrangement_of_multiplications_by_permutation_matrices}
	\varphi_k(h \gamma \Jb_R) v =  \underbrace{P \cdot \bigg(\varphi_k(h \gamma \underbrace{\Jb_R^P}_{\arrd{\textnormal{Computed directly}}}) \cdot \underbrace{\left(P^T v\right)}_{\arru{\textnormal{Permute vector}}}\bigg)}_{\textnormal{Permute back to the desired ordering of variables.}}
\end{equation}

\paragraph{Benefits of using block diagonal Jacobians} Recall that we use Krylov-subspaces to compute matrix-exponential like functions. The benefit of using block diagonal structures is the savings resulting from computing subspaces of smaller blocks compared to the full Jacobian matrix. For instance, computing an $M$ dimensional subspace for a matrix of size $N$ requires $\sim M N^2 +  M^2 N +  M N$ operations. If the matrix is block diagonal with two blocks, = computing an $M$-dimensional subspace on two blocks of size $N/2$ using the Arnoldi process requires  $\sim M \cdot \frac{N^2}{2} +  M^2 N +  M  N$ operations. When $M \ll N$ this is about half the computational cost of using the unstructured matrix. Moreover, if one uses an adaptive Arnoldi process like we do in this work, additional savings are obtained when one block converges to the desired tolerance faster than the other (meaning that fewer vectors are needed in the subspace for one block when compared to the other block). Computing matrix-exponential like functions on individual blocks can be performed in parallel.

In our numerical tests, we experimented with parallelism of block based matrix-exponential operations on the diffusion matrix alone. We saw noticeable improvements in performance as discussed later in the numerical results. Serial implementation of matrix-exponential operations on the rearranged reaction Jacobian turned out to be slightly more expensive than for the original reaction Jacobian due to the two additional vector permutations (data movement is expensive).  We did not experiment with parallelism on the rearranged reaction Jacobian, but  significant benefits are possible for large reaction-diffusion systems with many chemical species.

%%%%%%%%%%%%%%%%%%%%%%%%%%%%%%%%%%%%%%%

%%%%%%%%%%%%%%%%%%%%%%%%%%%%%%%%%%%%%%%
% !TEX root = Nonstiff_pexpw.tex
%%%%%%%%%%%%%%%%%%%%%%%%%%%%%%%%%%%%%%%
\section{Numerical Experiments}
\label{sec:numerics}
%%%%%%%%%%%%%%%%%%%%%%%%%%%%%%%%%%%%%%%

%%%%%%%%%%%%%%%%%%%%%%%%%%%%%%%%%%%%%%%
\subsection{Test problems}
%%%%%%%%%%%%%%%%%%%%%%%%%%%%%%%%%%%%%%%

\paragraph{Lorenz-96} 
The dynamical system described by the system of equations,
\begin{equation}
	\label{eqn:Lorenz}
	\frac{dy_j}{dt} = -y_{j-1}(y_{j-2}-y_{j+1})-y_j+F,\quad j = 1\dots N, \quad
	y_{0} = y_{N},
\end{equation}
was introduced by Edward Lorenz in \cite{lorenz1996}. It describes the evolution of an arbitrary component of the atmosphere at $N$ equally spaced points along a latitudinal circle. In our fixed-step experiment, we take $N = 40$ and $f = 8$. The initial condition, $y_0$, is obtained by integrating a random initialization of the state vector from $0$ to $0.3$ time units. 

\paragraph{1-D Semi-linear Parabolic Problem} The semilinear parabolic PDE \cite[Example 6.1]{luan2014a}
\begin{equation}
	\label{eqn:Semilinear}
	\pfrac{y}{t} = \pfrac{^2 y}{x^2} + \frac{1}{1 + y^2} + \Phi(x, t), \qquad (x, t) \in [0, 1] \times [0, 1],
\end{equation}
is solved numerically for $y(x,t)$ assuming homogenous Dirichlet boundary conditions. $\Phi(x, t)$ is chosen such that the true solution of equation \eqref{eqn:Semilinear} is $y(x, t) = x(1-x) e^t$. The PDE is discretized in space using second-order finite differences over 500 equidistant grid points. 

\paragraph{Allen-Cahn}

The Allen-Cahn reaction-diffusion PDE \cite{allen1979}
\begin{equation}
	\label{eqn:Allen_Cahn}
	\pfrac{u}{t} = \alpha \cdot \Delta u + \gamma \cdot (u - u^3),\qquad (x,y) \in [0, 1]^2,\quad t \in [0,0.5],
\end{equation}
describes the process of phase separation. We numerically solve a two dimensional version of the PDE semi-discretized using standard second-order finite differences with selected diffusion coefficient  $\alpha$, and reaction coefficient $\gamma$. The boundary conditions were assumed to be periodic. Initial conditions were set to $u_0(x, y, 0) = 0.4 + 0.1\,(x + y) + 0.1\,\sin(10x) \, \sin(20y)$. The ODE system obtained after semi-discretization is stiff and integrated in time using our methods. 

\paragraph{Reversible Gray-Scott}

The following system of PDEs \cite{mahara2004}:
\begin{equation}
	\begin{split}
	\pfrac{U}{t} &= D_U \nabla^2 U  - k_1 U V^2 + f(1 - U) + k_{-1} V^3 ,\\
	\pfrac{V}{t} &= D_V \nabla^2 V  + k_1 U V^2 - (f + k_2)V - k_{-1} V^3  + k_{-2} P,\\
	\pfrac{P}{t} &= D_P \nabla^2 P  - k_{-2} P - f P,
	\end{split}
	\label{eqn:Reversible_Gray_Scott}
\end{equation}
extends the traditional Gray-Scott model \cite{pearson1993} by making the reactions reversible. $U$, $V$ and $P$ are the concentrations of the reactants, and $D_U$, $D_V$ and $D_P$ are the corresponding diffusion coefficients. $f$ is the rate of flow, $k_1$ and $k_2$ are the rate constants of the forward reactions and $k_{-1}$ and $k_{-2}$ the rate constants of the backward reactions. We discretize the PDE system in space using second-order finite differences over a grid of size $100 \times 100$. The parameter settings for the experiment are $D_U = 2$, $D_V = 1$, $D_P = 0.1$, $k_1 = 1$, $k_2 = 0.055$, $k_{-1} = 0.001$, $k_{-2} = 0.001$, $f = 0.028$. The initial conditions are $(U, V, P) = (1, 0, 0)$ with a small square region perturbed as $(U, V, P) = (0.5, 0.25, 0) + 10\%$ random noise \cite{mahara2005}. The boundary conditions are assumed to be periodic and the interval of integration is $[0, 5]$.

%%%%%%%%%%%%%%%%%%%%%%%%%%%%%%%%%%%%%%%
\subsection{Tested schemes}
%%%%%%%%%%%%%%%%%%%%%%%%%%%%%%%%%%%%%%%

We perform fixed and adaptive time-stepping experiments with the methods derived in this paper. We selected a set of third-order exponential methods, similar to those derived herein, from the literature to provide performance benchmarks:
\begin{itemize}
	\item[a.] sEPIRK3, a third-order split EPIRK method introduced and studied in \cite{Rainwater_2014_semilinear}; 
	\item[b.] EPIRK3s3, a third-order non-split EPIRK method which shares the coefficients with sEPIRK3 \cite{Rainwater_2014_semilinear}; 
	\item[c.] EPIRKW3 and EPIRKW3c, two third-order W-methods of EPIRK type that the authors introduced in \cite{Sandu_2019_EPIRKW}, with $\Wb = \Jb$; 
	\item[d.] EPIRKW3-D, same as EPIRKW3 but $\Wb = \Jb_D$; 
	\item[e.] EPIRKW3-R, same as EPIRKW3 but $\Wb = \Jb_R$;
\end{itemize}
Since the original publication does not provide embedded coefficients for sEPIRK (and EPIRK3s3) methods \cite{Rainwater_2014_semilinear}, we use sEPIRK (and EPIRK3s3) primarily in fixed timestep experiments.
The partitioned sEPIRK methods based on averaging (PSEPIRK) perform poorly in practice for stiff systems, and are not included in the numerical tests.

The convergence orders for select methods are summarized in Table \ref{table:order_of_convergence}. Table \ref{table:splittings_used_in_partitioned_methods} details the partitioning used for the right-hand side function of each problem, and their corresponding  Jacobians.

%%%%%%%%%%%%%%%%%%%%%%%%%%%%%%%%%%%%%%%
\subsection{Fixed timestep experiments}
%%%%%%%%%%%%%%%%%%%%%%%%%%%%%%%%%%%%%%%

For fixed timestep experiments, we ran the integrators on a set of test problems for a range of fixed step sizes as shown below for each problem:
	\begin{itemize}
		\item Lorenz-96, $h \in \{\num{0.06}, \num{0.03}, \hdots ,\num{9.375e-4}\}$,
		\item 1-D Semi-linear Parabolic Problem, $h \in \{\num{1.6e-2}, \num{0.8e-2}, \hdots ,\num{1.5625e-05}\}$,
		\item Allen-Cahn,  $150 \times 150$ grid, $h \in \{\num{1.28e-1}, \num{0.64e-02}, \hdots ,\num{1.5625e-05}\}$.
	\end{itemize}
The problem setup and the findings for each problem are discussed below.

\paragraph{Lorenz-96 results} 
The ODE system \eqref{eqn:Lorenz} is partitioned for the new methods into $\fone(y) = A_{N \times N}~y$, where $A_{N \times N}$ is a random matrix, and $\ftwo(y) = F(y) - A_{N \times N}~y$, where $F(y)$ is the right-hand side of the original ODE system. Component Jacobians are approximated by the diagonal of the Jacobian of the corresponding part. We  integrate equation \eqref{eqn:Lorenz} with each method by taking fixed timesteps over the time-span $[0, 0.3]$ time units and compute the relative error against a reference solution obtained by integrating the ODE for the same settings using \texttt{ode45}, with  absolute and relative tolerance set to $10^{-12}$. Results are shown in Figure \ref{fig:experiment-fixed-timestep-Lorenz-stepsvserror}. All integrators show their theoretical orders of convergence (see Table \ref{table:order_of_convergence}).

\begin{table}[htb!]
\centering
	\begin{tabular}{ | p{4cm} | c| c| c| c| c| }
		\hline
		\multicolumn{6}{|c|}{\textit{Fixed Timestep Experiments}} \\ \hline\hline
		\textbf{Experiment} & \textbf{pexpw3A} & \textbf{pexpw3B} & \textbf{epirkw3s3} & \textbf{sepirk3} & \textbf{expirkw3}\\
		\hline
		\textbf{Lorenz-96} & 2.99 & 2.99 & 3.03 & 2.99 & 3.00 \\ \hline
		\textbf{Semilinear Parabolic} & 1.31 & 1.32 & 2.69 & 3.33 &  2.89\\ \hline
		\textbf{Allen-Cahn,  150x150 Grid} & 1.77 & 1.88 & -- & 5.45 & 3.05 \\ \hline\hline
		\multicolumn{6}{|c|}{\textit{Adaptive Timestep Experiments}} \\ \hline
		\textbf{Experiment} & \multicolumn{2}{|c|}{\textbf{pexpw3A}} & \textbf{pexpw3B} & \textbf{expirkw3} & \textbf{expirkw3-D}\\ \hline
		\textbf{Allen-Cahn,  300x300 Grid (I)} & \multicolumn{2}{|c|}{2.05} &  1.60 &  2.43 &  - \\ \hline
		\textbf{Allen-Cahn,  300x300 Grid (II)} & \multicolumn{2}{|c|}{4.01} &  4.79 &  2.08 &  3.33 \\ \hline\hline
		\textbf{Experiment} & \multicolumn{2}{|c|}{\textbf{pexpw3A}} & \textbf{pexpw3A-RD} & \textbf{expirkw3} & \textbf{expirkw3c} \\ \hline
		\textbf{Reversible Gray-Scott, 100x100 Grid} & \multicolumn{2}{|c|}{2.99} & 3.34 & 1.80  & 3.01 \\ \hline
	\end{tabular}
	\caption{Experimental convergence orders of select methods computed using (at least) four consecutive points lying along a straight line. Partitioned EPIRK methods only gave a solution to the Lorenz-96 problem and they converged as follows: pepirkw3A (3.00), pepirkw3B (2.98).}
	 \label{table:order_of_convergence}
\end{table}

\begin{figure}[thb!]
	\centering
	\includegraphics[scale=0.35]{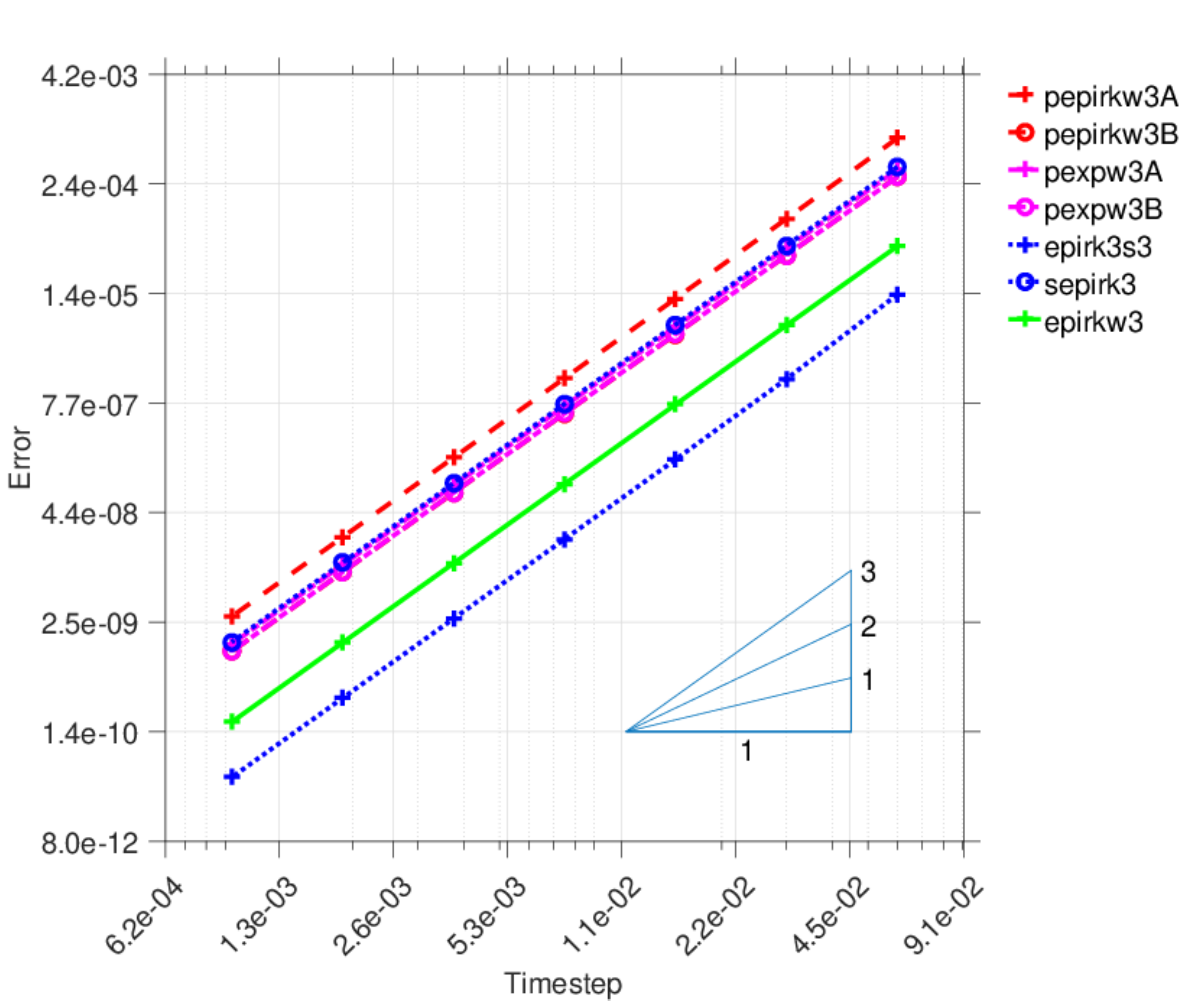}
	\caption{Fixed timestep experiment on the Lorenz-96 system 	\eqref{eqn:Lorenz}. All methods show full convergence order.\label{fig:experiment-fixed-timestep-Lorenz-stepsvserror}}
\end{figure}

\paragraph{1-D Semi-linear Parabolic Problem results} 

We integrate the resulting stiff ODE system \eqref{eqn:Semilinear} with each method by taking fixed timesteps and plot the relative error computed against a reference solution obtained using \texttt{ode15s} with absolute and relative tolerance set to $10^{-12}$. The convergence plot is shown in Figure \ref{fig:experiment-fixed-timestep-Semilinear-stepsvserror}. PEPIRKW methods fail to solve the problem. PEXPW methods show order reduction (Table \ref{table:order_of_convergence}). Unpartitioned, non-split methods which include EPIRKW3, EPIRK3s3 and EPIRKW3-D converge to highly-accurate solutions for small timesteps.
\begin{figure}[thb!]
	\centering
	\includegraphics[scale=0.6]{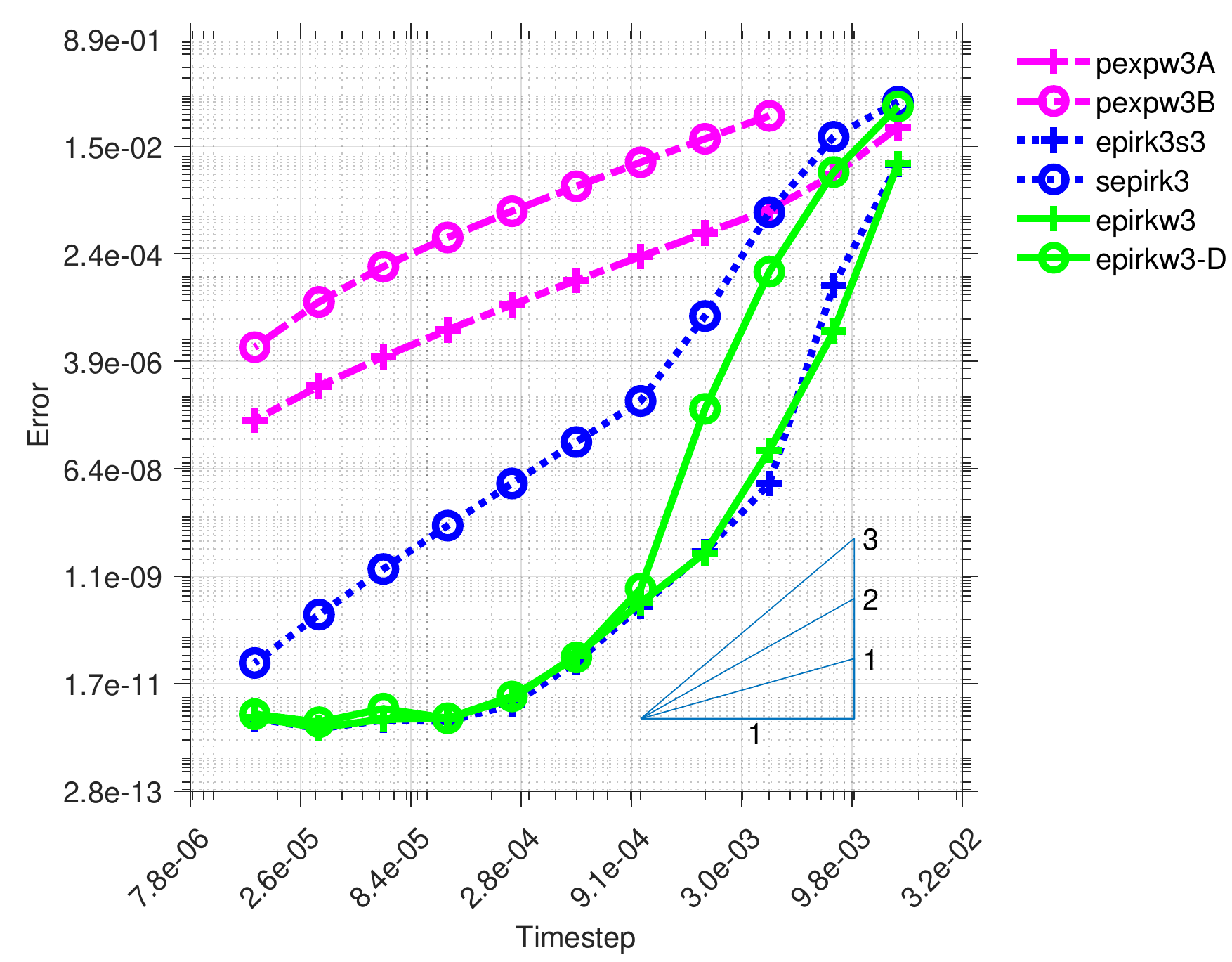}
	\caption{Fixed timestep experiment on 1-D Semi-linear Parabolic Problem 	\eqref{eqn:Semilinear}. PEPIRKW methods did not produce a solution, PEXPW methods show order $\approx 1.3$. Convergence order of other methods are shown in Table \ref{table:order_of_convergence}.\label{fig:experiment-fixed-timestep-Semilinear-stepsvserror}}
\end{figure}

\paragraph{Allen-Cahn results}

We numerically solve a two dimensional version of the PDE \eqref{eqn:Allen_Cahn}, semi-discretized using standard second-order finite differences on a $150 \times 150$ grid, with the diffusion coefficient  $\alpha = 1$, and the reaction coefficient $\gamma = 10$. The ODE system obtained after semi-discretization is integrated in time from $t = 0$ to $t = 0.5$ time units with fixed timesteps. Figure \ref{fig:experiment-fixed-timestep-AllenCahn-stepsvserror} shows the work-precision and convergence diagrams where `Error' is computed as the relative error in 2-norm against a reference solution obtained using \texttt{ode15s} with absolute and relative tolerance set to $10^{-12}$. 

The empirical convergence orders for some select methods is given in Table \ref{table:order_of_convergence}. In the asymptotic region,  partitioned-exponential methods have $\textnormal{order} \approx 1.7$. EPIRKW3 methods show $\textnormal{order} = 3$ for timesteps smaller than 1e-3. EPIRK3s3 converges to a highly-accurate solution for smaller timesteps than other methods, but its cost per timestep is higher than partitioned-exponential methods. Overall, partitioned-exponential methods obtain solutions with a higher accuracy for a given CPU time than both split and non-split EPIRK and EPIRKW methods.

\begin{figure}[thb!]
	\centering
	\begin{subfigure}[t]{\textwidth}
		\centering
		\includegraphics[scale=0.6]{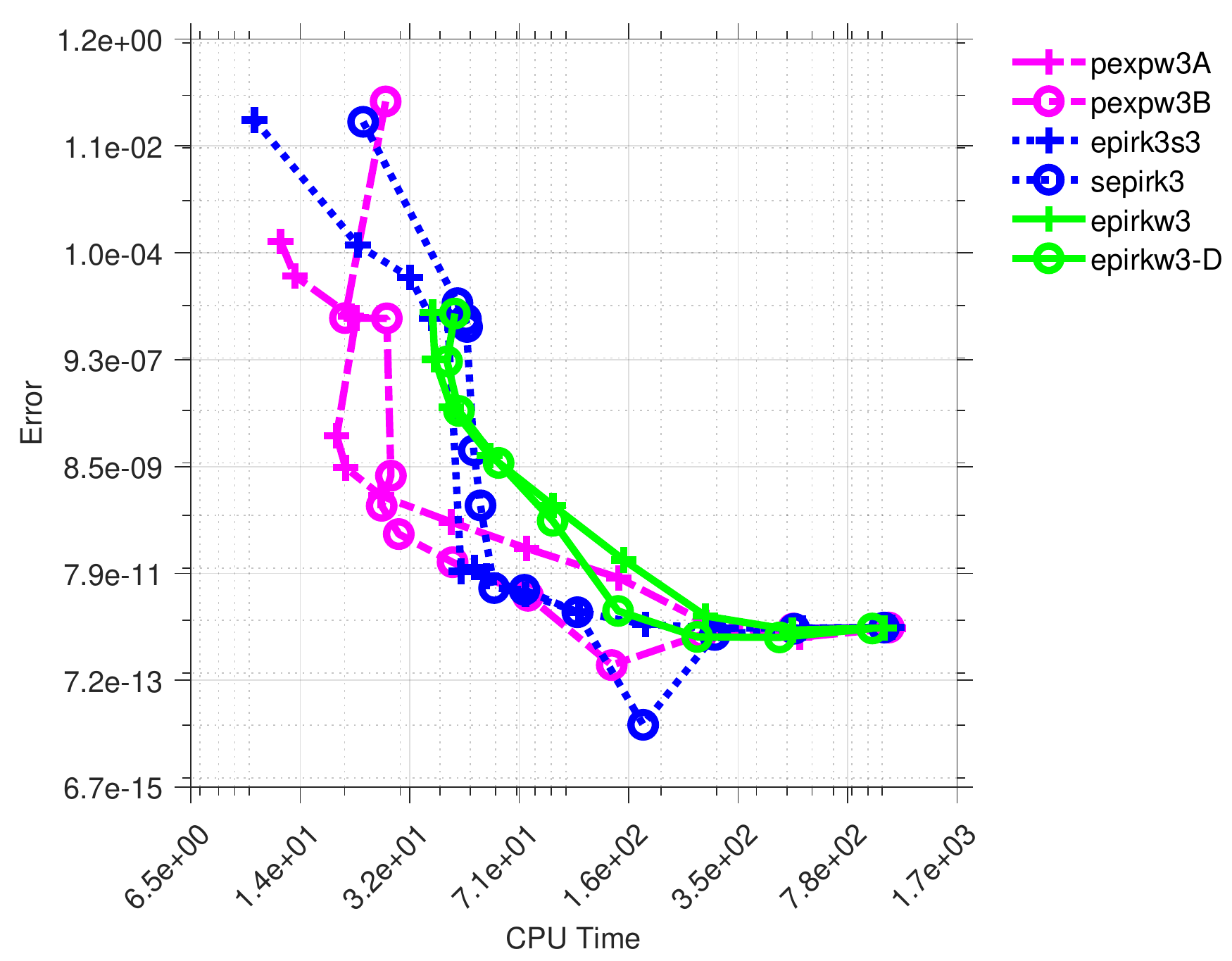}
		\caption{Work-precision diagram.}
	\end{subfigure}
	\vskip\baselineskip
	\begin{subfigure}[t]{\textwidth}
		\centering
		\includegraphics[scale=0.6]{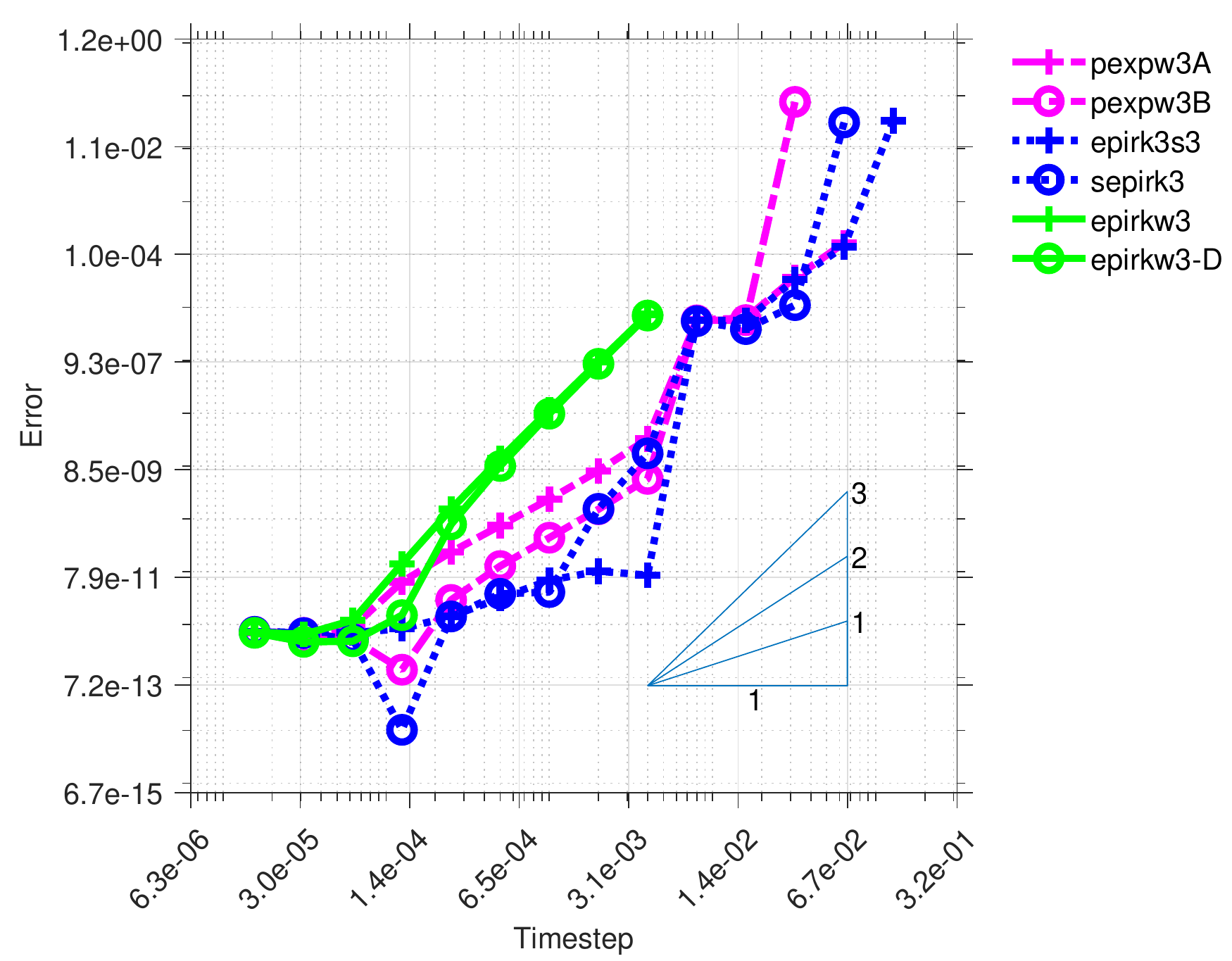}
		\caption{Convergence diagram.}
	\end{subfigure}
	\caption{Fixed timestep experiment on 2-D Allen-Cahn Problem \eqref{eqn:Allen_Cahn} on $150 \times 150$ grid. PEXPW methods show order $\approx 1.7$, PEPIRKW methods did not produce a solution, EPIRKW3 shows order $\approx 3$.\label{fig:experiment-fixed-timestep-AllenCahn-stepsvserror}}
\end{figure}

%%%%%%%%%%%%%%%%%%%%%%%%%%%%%%%%%%%%%%%
\subsection{Adaptive timestep experiments}
%%%%%%%%%%%%%%%%%%%%%%%%%%%%%%%%%%%%%%%

We perform adaptive timestep experiments for a range of solution tolerances between ${10^{-1}}$ to ${10^{-10}}$, using the \texttt{MATLODE} \cite{augustine2018} framework. Reference solutions are computed using \texttt{ode15s} with absolute and relative tolerance set to $10^{-12}$. The error controller is the same across all methods and is based on the discussion in \cite[Section II.4]{Hairer_book_I}. 

\paragraph{Allen-Cahn results on a $300 \times 300$ grid}

This time we discretize the Allen-Cahn system in space using second-order finite differences along $300$ grid points in each dimension. Each experiment deals with progressively stiffer reaction terms. The stiffness of the reaction term of Allen-Cahn increases with the value of $\gamma$. The results reported in Figures \ref{fig:experiment-adaptive-timestep-AllenCahn-GammaEquals10}, \ref{fig:experiment-adaptive-timestep-AllenCahn-GammaEquals100} and \ref{fig:experiment-adaptive-timestep-AllenCahn-GammaEquals1000} correspond to the  following parameter settings:
\begin{itemize}
	\item[I.] Figure \ref{fig:experiment-adaptive-timestep-AllenCahn-GammaEquals10}: $\alpha = 1, \gamma = 10$,
	\item[II.] Figure \ref{fig:experiment-adaptive-timestep-AllenCahn-GammaEquals100}: $\alpha = 1, \gamma = 100$, and
	\item[III.] Figure \ref{fig:experiment-adaptive-timestep-AllenCahn-GammaEquals1000}: $\alpha = 1, \gamma = 1000$.
\end{itemize}
The numerical results lead to the following conclusions.
\begin{itemize}
\item {Partitioned-exponential methods can be more stable than unpartitoned methods for some stiffness regimes.} For parameter settings I and II, the partitioned-exponential method, PEXPW3A, needs fewer timesteps than the unpartitioned EPIRKW3 method, indicating that it is more stable than the unpartitoned method. However, as the stiffness of the reaction term is increased further (parameter setting III), partitioned-exponential methods are no longer able to solve the problem (see Figure \ref{fig:experiment-adaptive-timestep-AllenCahn-GammaEquals1000}).
% \subparagraph{Stronger coupling between the partitions in partitioned-exponential methods may lead to more stable methods.} The stages of PEXPW3A are more strongly coupled than the stages of PEXPW3B. Judging from Figures \ref{fig:experiment-adaptive-timestep-AllenCahn-GammaEquals10}, \ref{fig:experiment-adaptive-timestep-AllenCahn-GammaEquals100} and \ref{fig:experiment-adaptive-timestep-AllenCahn-GammaEquals1000}, it appears that strongly coupled methods may exhibit more stability than loosely coupled ones.  

\item {Unpartitioned methods may need the full Jacobian or a very close approximation of it when both partitions are stiff, making them not very amenable to computational optimizations.} It is clear from Figures \ref{fig:experiment-adaptive-timestep-AllenCahn-GammaEquals10}, \ref{fig:experiment-adaptive-timestep-AllenCahn-GammaEquals100} and \ref{fig:experiment-adaptive-timestep-AllenCahn-GammaEquals1000} that as the stiffness of the reaction term is increased, the unpartitioned method EPIRKW3-D gets more expensive and less stable in comparison to the unpartitioned method EPIRKW3. While this is expected, it indicates that unpartitioned methods may require the full Jacobian to perform well. Using the full Jacobian makes the unpartitioned methods unable to take advantage of the computational optimizations  discussed in Section \ref{sec:implementation}.
\end{itemize}

\begin{figure*}[htb!]
	\centering
	\begin{subfigure}[t]{\textwidth}
		\centering
		\includegraphics[scale=0.6]{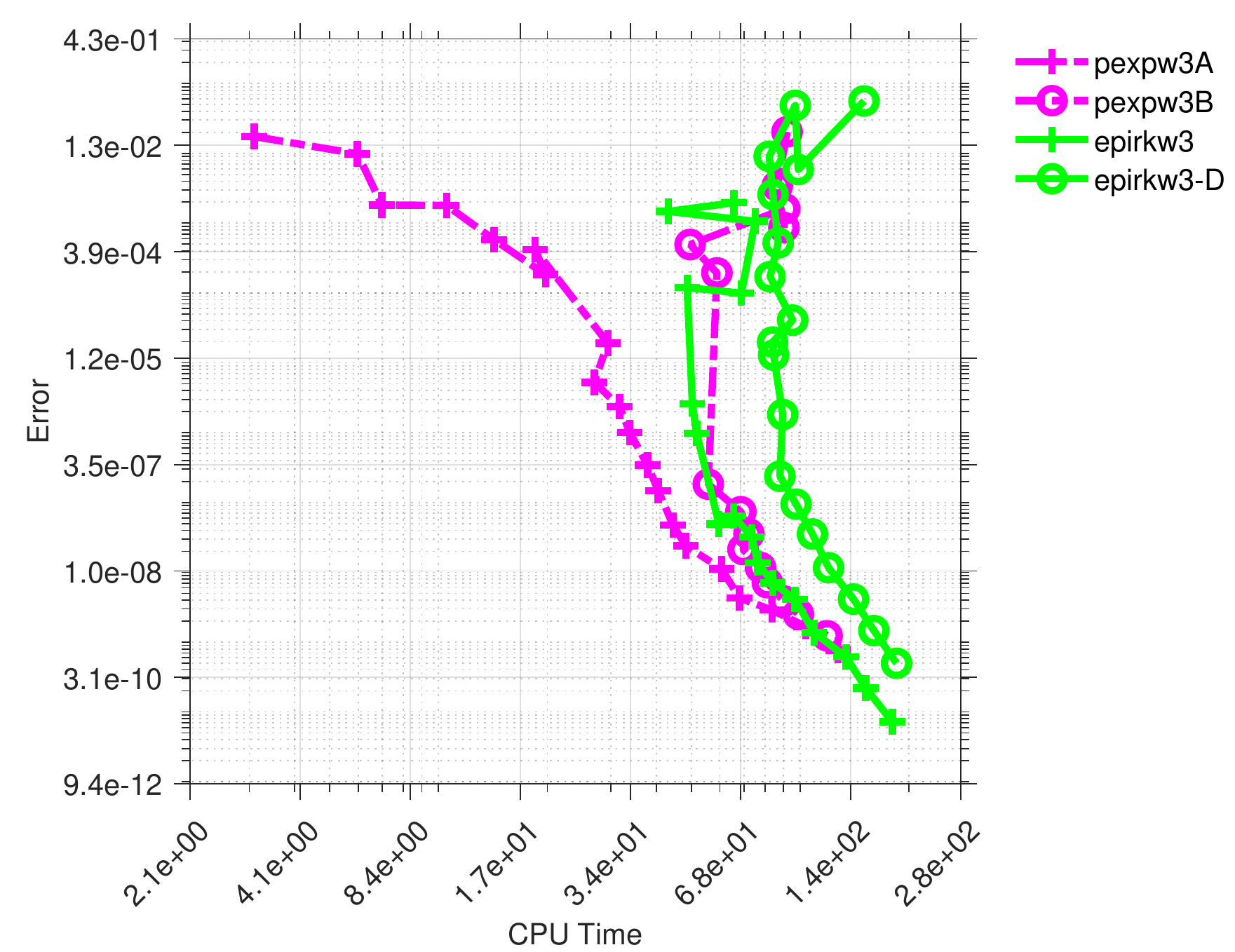}
		\caption{Work precision diagram.}
	\end{subfigure}
	\vskip\baselineskip
	\begin{subfigure}[t]{\textwidth}
		\centering
		\includegraphics[scale=0.6]{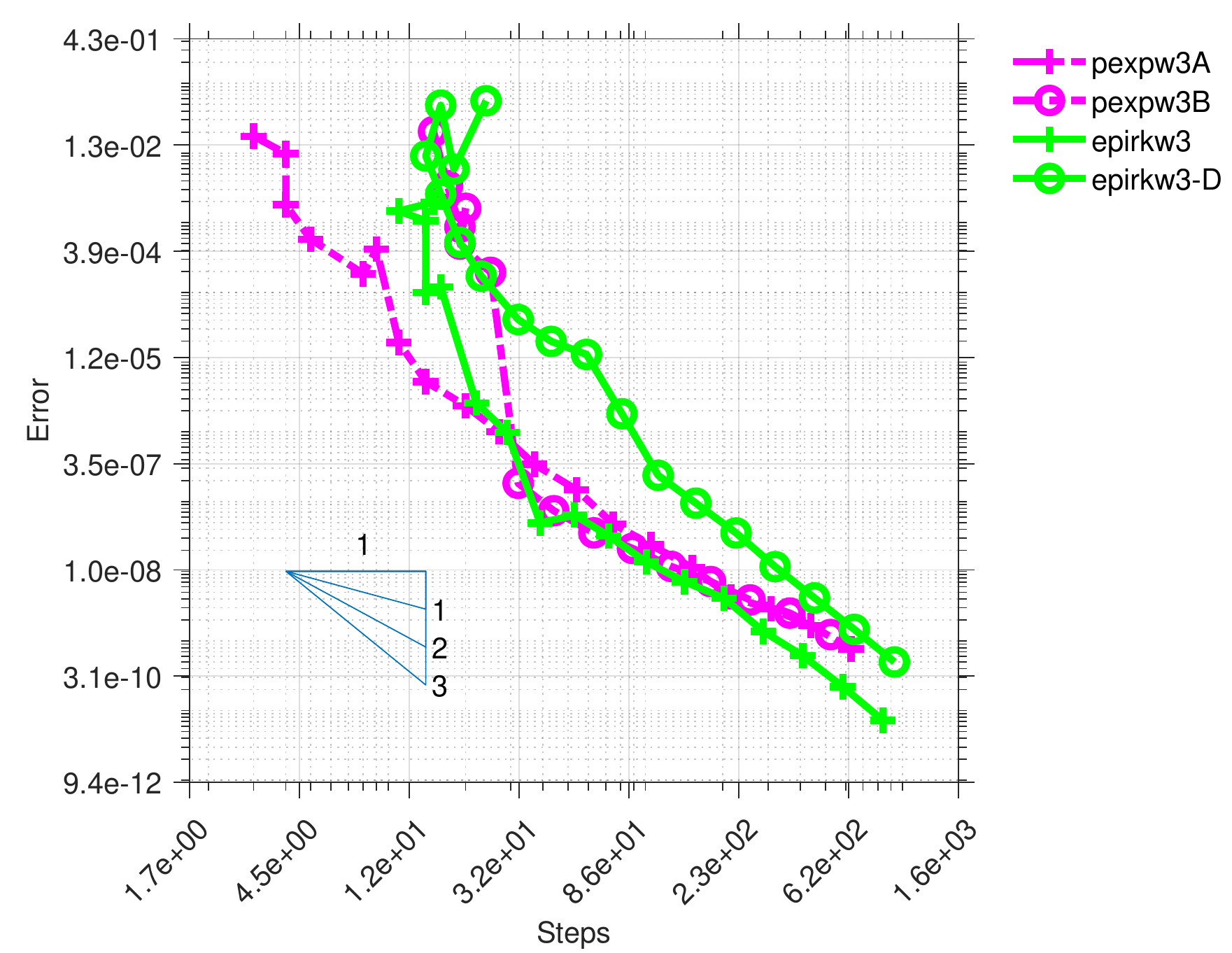}
		\caption{Convergence diagram.}
	\end{subfigure}
	\caption{Adaptive timestep experiments using Allen-Cahn \eqref{eqn:Allen_Cahn},  $300 \times 300$ grid (I). $\alpha = 1$, $\gamma = 10$ \label{fig:experiment-adaptive-timestep-AllenCahn-GammaEquals10}}
\end{figure*}

\begin{figure*}[htb!]
	\centering
	\begin{subfigure}[t]{\textwidth}
		\centering
		\includegraphics[scale=0.6]{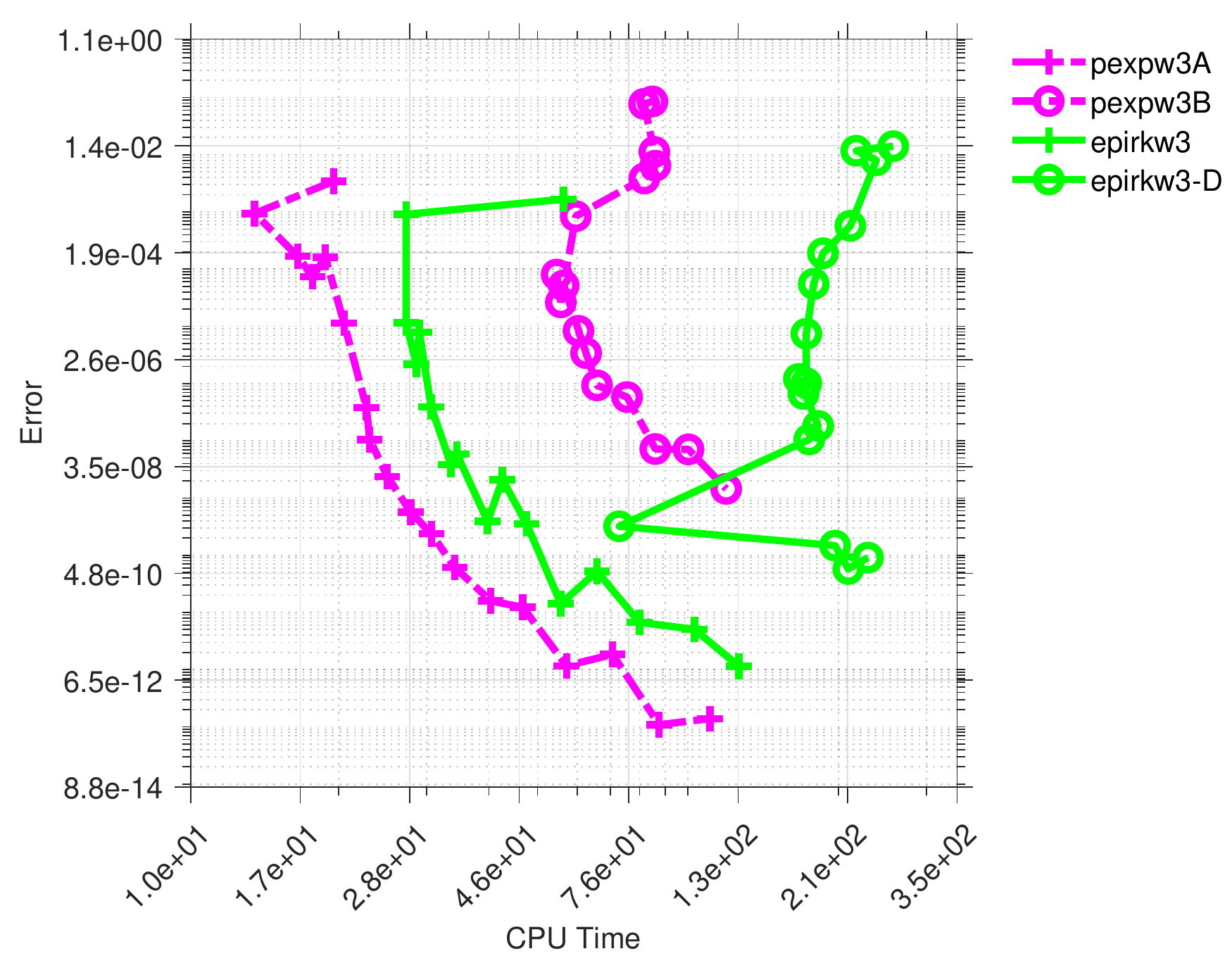}
		\caption{Work precision diagram.}
	\end{subfigure}
	\vskip\baselineskip
	\begin{subfigure}[t]{\textwidth}
		\centering
		\includegraphics[scale=0.6]{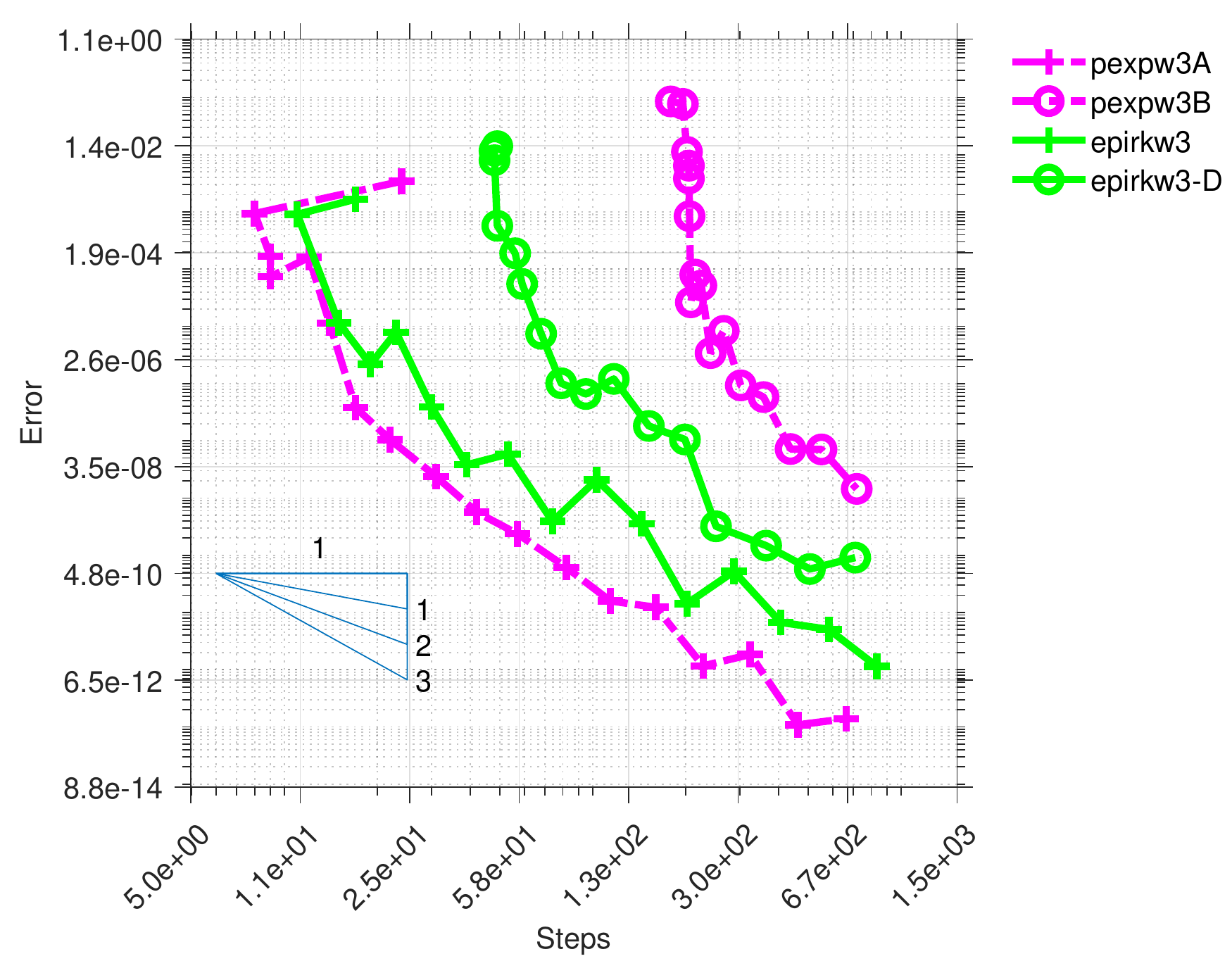}
		\caption{Convergence diagram.}
	\end{subfigure}
	\caption{Adaptive timestep experiments using Allen-Cahn \eqref{eqn:Allen_Cahn},  $300 \times 300$ grid (II). $\alpha = 1$, $\gamma = 100$ \label{fig:experiment-adaptive-timestep-AllenCahn-GammaEquals100}}
\end{figure*}

\begin{figure*}[htb!]
	\centering
	\begin{subfigure}[t]{\textwidth}
		\centering
		\includegraphics[scale=0.6]{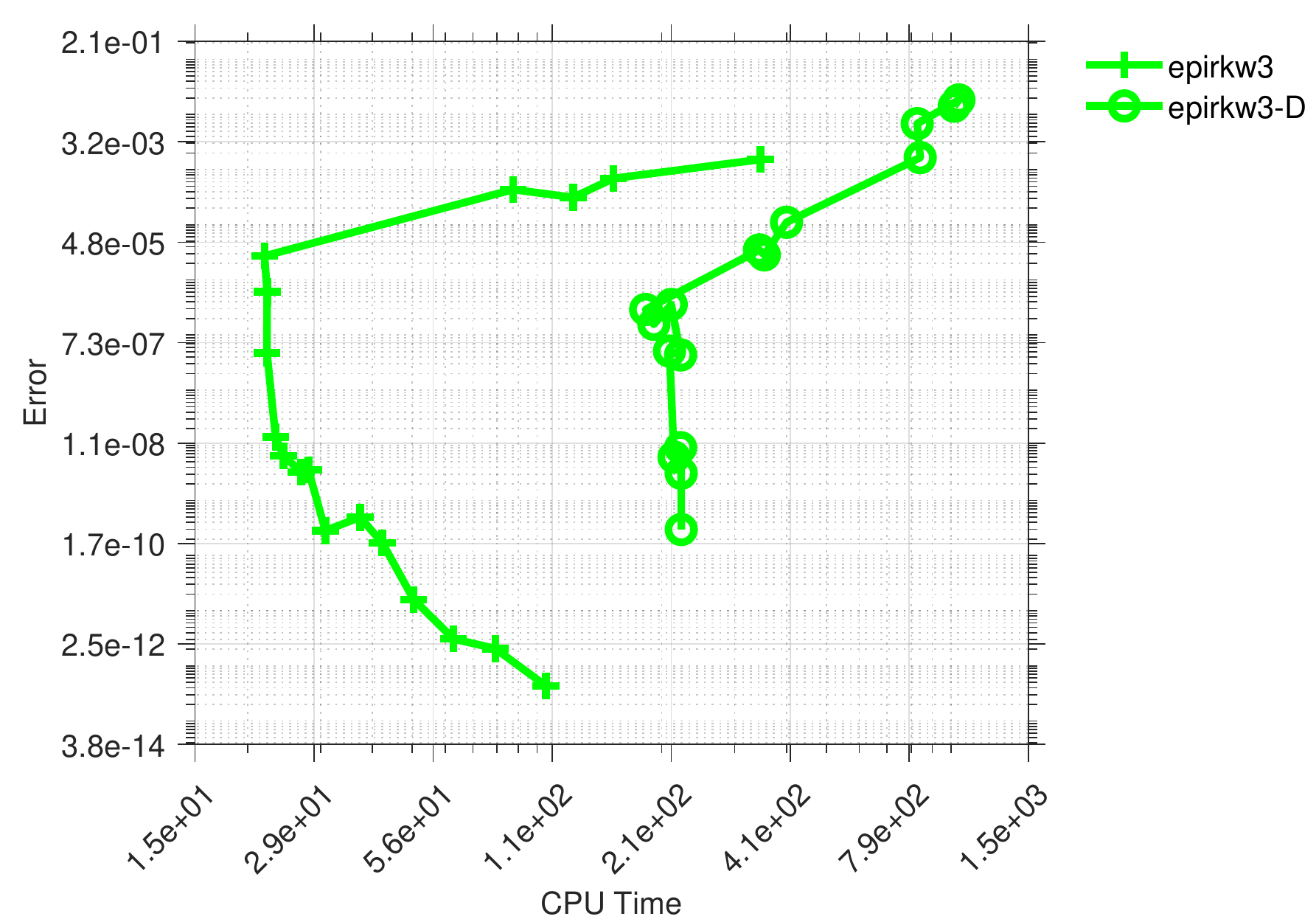}
		\caption{Work precision diagram.}
	\end{subfigure}
	~
	\begin{subfigure}[t]{\textwidth}
		\centering
		\includegraphics[scale=0.6]{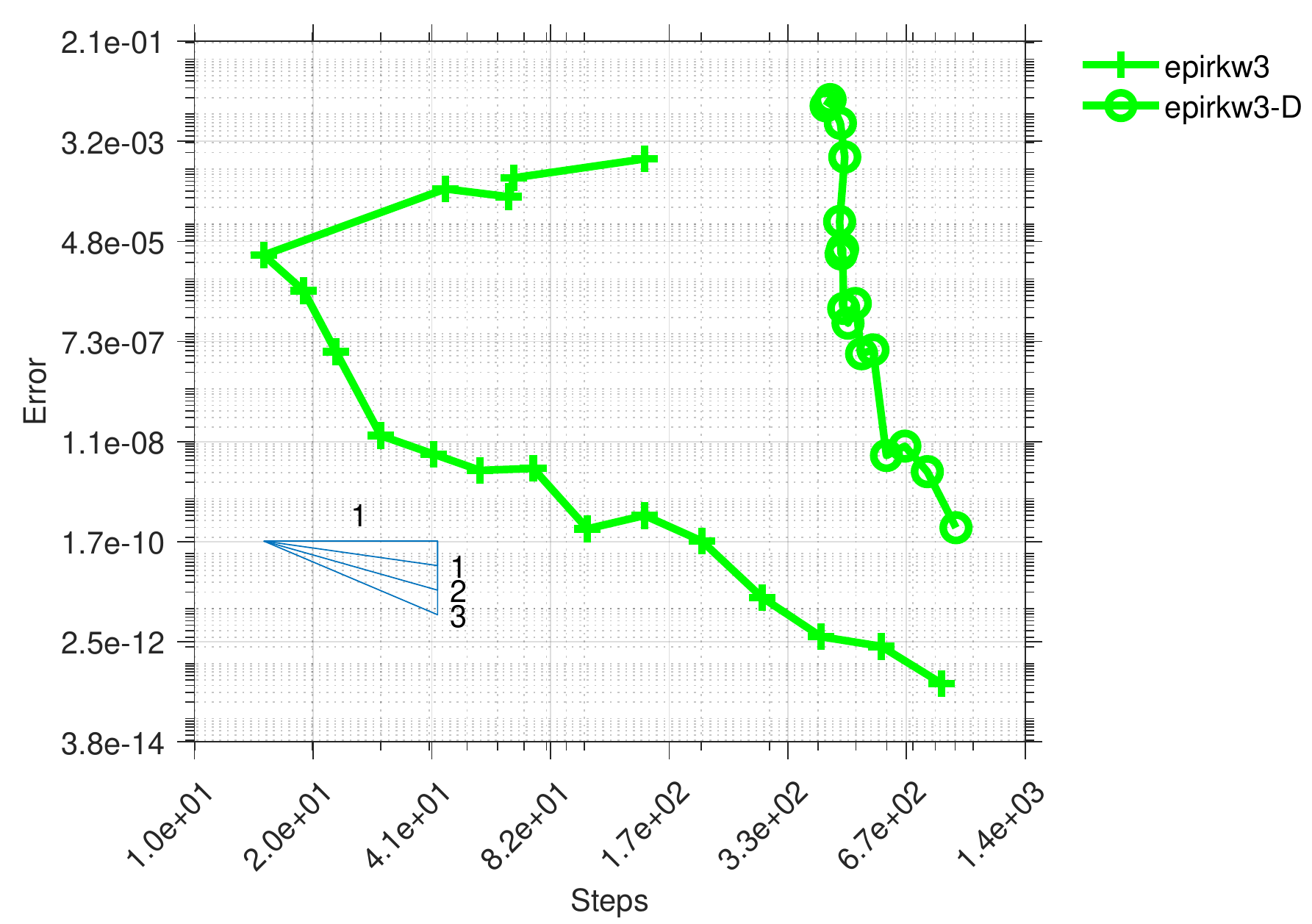}
		\caption{Convergence diagram.}
	\end{subfigure}
	\caption{Adaptive timestep experiments using Allen-Cahn \eqref{eqn:Allen_Cahn},  $300 \times 300$ grid (III). $\alpha = 1$, $\gamma = 1000$ \label{fig:experiment-adaptive-timestep-AllenCahn-GammaEquals1000}}
\end{figure*}

\paragraph{Reversible Gray-Scott results}

Figure \ref{fig:experiment-adaptive-timestep-ReversibleGrayScott} shows the work-precision and the convergence diagrams for numerical experiments with the reversible Gray-Scott system \eqref{eqn:Reversible_Gray_Scott}. We compare PEXPW3A, EPIRKW3 and a modified implementation of PEXPW3A, PEXPW3A-RD, for reaction-diffusion systems. The modified implementation PEXPW3A-RD evaluates the $\varphi$ functions on the diffusion Jacobian in parallel using \texttt{MATLAB} parallel pool, while it does not explicitly parallelize the evaluation of $\varphi$ functions on the reaction Jacobian. As is clear from the figure, PEXP3WA is more stable and efficient than the unpartitioned EPIRKW3 method on this test problem. We even test a variant of EPIRKW3 with a different set of coefficients, EPIRKW3c and find that it performs similarly to EPIRKW3. 

Furthermore, by partitioning the problem and dealing with the individual partitions separately, we have more room to implement computational optimizations discussed in Section \ref{sec:implementation} in the implementation. While PEXPW3A and PEXPW3A-RD both take the same number of steps, since PEXPW3A-RD parallelizes the computation of $\varphi$ functions on the diffusion matrices, it is twice as fast as PEXPW3A in obtaining the solution.

\begin{figure*}[htb!]
	\centering
	\begin{subfigure}[t]{\textwidth}
		\centering
		\includegraphics[scale=0.6]{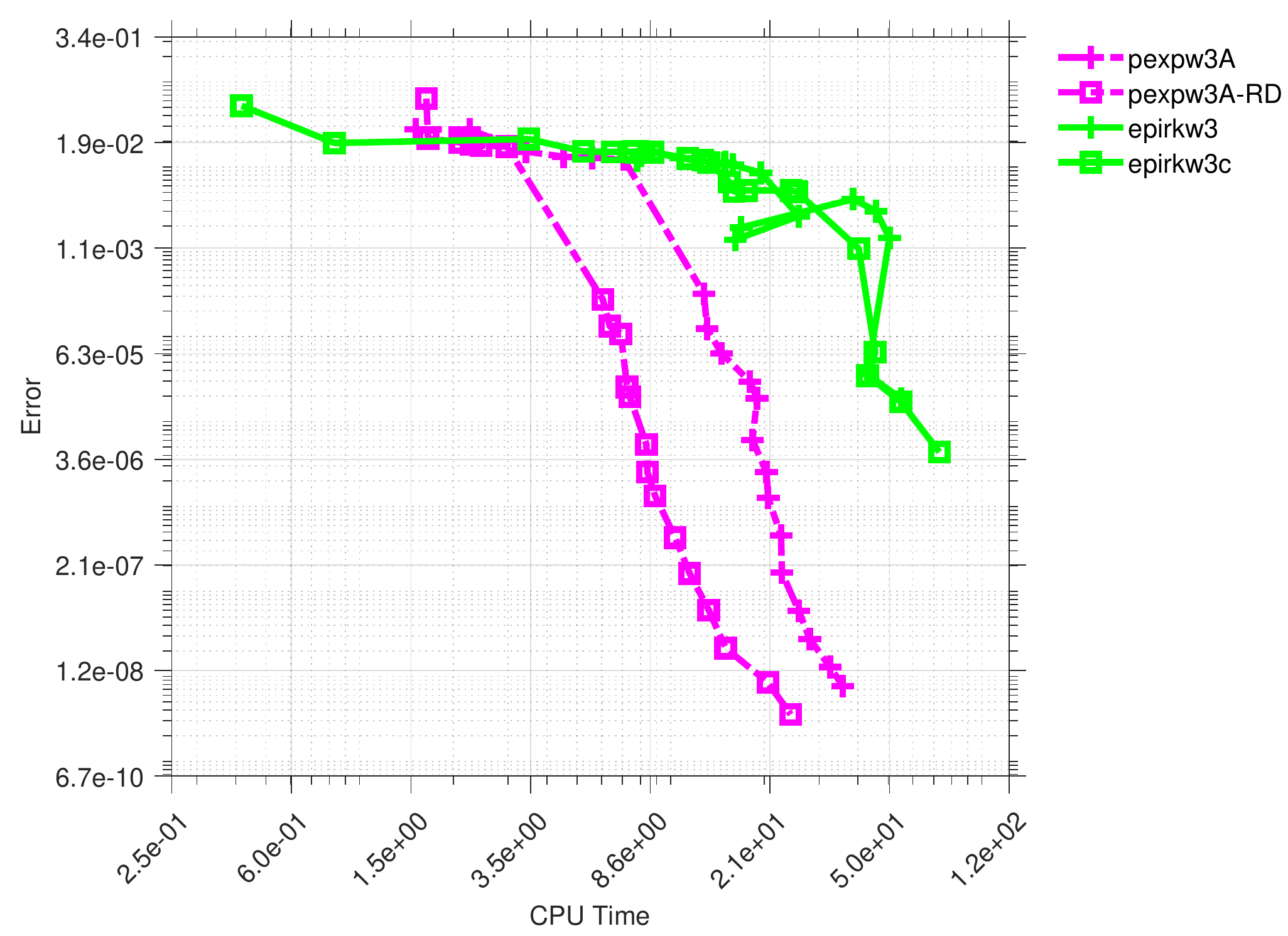}
		\caption{Work precision diagram.}
	\end{subfigure}
	\vskip\baselineskip
	\begin{subfigure}[t]{\textwidth}
		\centering
		\includegraphics[scale=0.6]{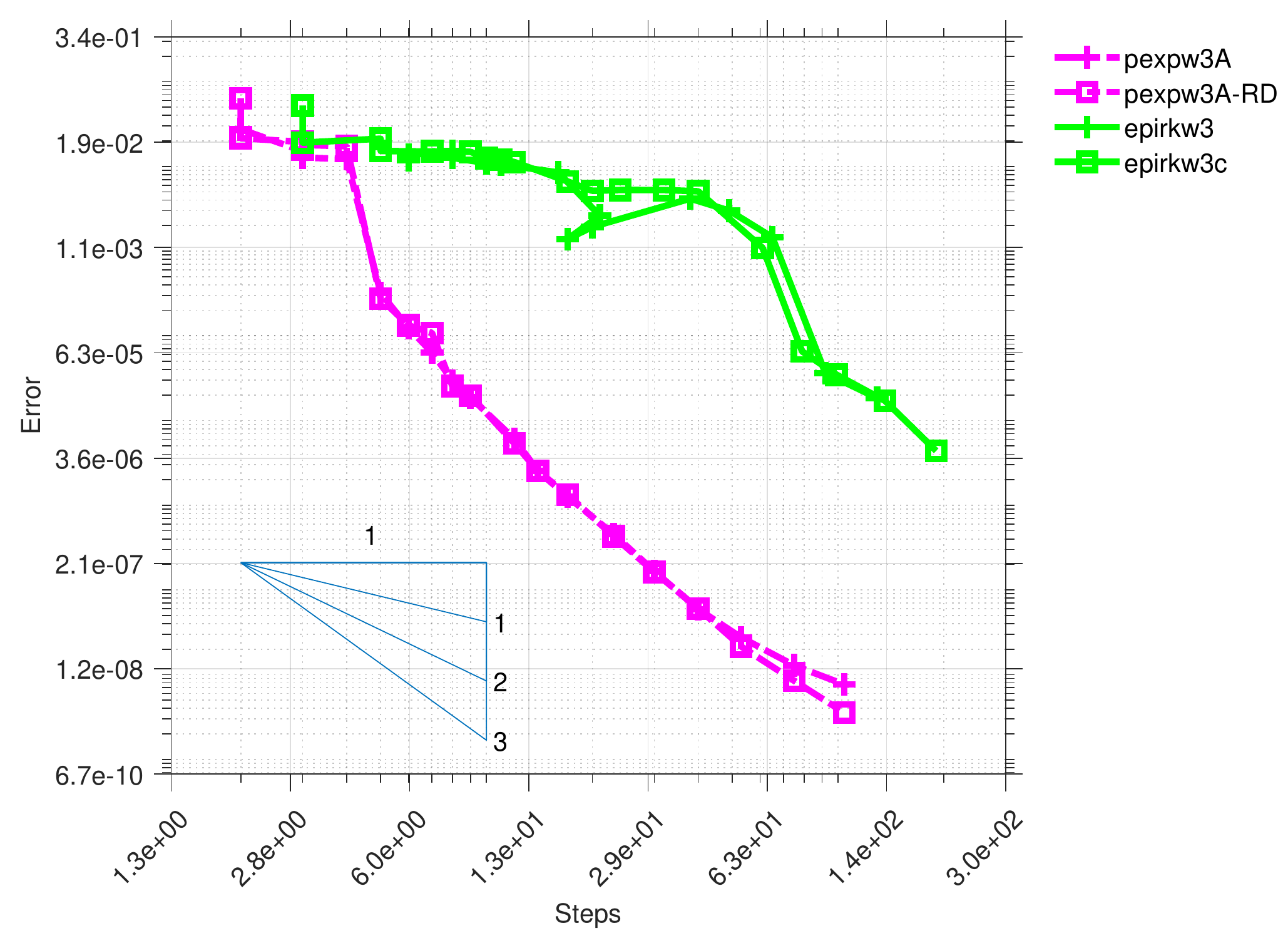}
		\caption{Convergence diagram.}
	\end{subfigure}
	\caption{Adaptive timestep experiments using Reversible Gray-Scott \eqref{eqn:Reversible_Gray_Scott} on $100 \times 100$ grid. $D_U = 2$, $D_V = 1$, $D_P = 0.1$, $k_1 = 1$, $k_2 = 0.055$, $k_{-1} = 0.001$, $k_{-2} = 0.001$, $f = 0.028$\label{fig:experiment-adaptive-timestep-ReversibleGrayScott}}
\end{figure*}

\begin{landscape}
\begin{table}[htb!]
	\begin{tabular}{|l|c|c|c|c|c|}
		\hline
		\multicolumn{6}{|c|}{\textit{Fixed Timestep Experiments}} \\ \hline\hline
		\textbf{Experiment} & \multicolumn{3}{|c|}{\textbf{PEPIRKW and PEXPW}} & \multicolumn{2}{|c|}{\textbf{expirkw3}}\\
		\hline
		\textbf{Lorenz-96} & \multicolumn{3}{|c|}{$\fone(y) = A_{N \times N} \,y$} & \multicolumn{2}{|c|}{}    \\ 
			& \multicolumn{3}{|c|}{$\ftwo(y) = F(y) - A_{N \times N} \,y$} & \multicolumn{2}{|c|}{$\Wb = J$}      \\ 
			& \multicolumn{3}{|c|}{$J^{\{1\}}(y) = \textnormal{diag}(A_{N \times N})$} &  \multicolumn{2}{|c|}{}     \\ 
			& \multicolumn{3}{|c|}{ $J^{\{2\}}(y) = \textnormal{diag}(J  - A_{N \times N})$} &  \multicolumn{2}{|c|}{}    \\ 
		\hline
		\hline
		\textbf{Experiment} & \multicolumn{3}{|c|}{\textbf{PEXPW}} & \textbf{expirkw3} & \textbf{expirkw3-D} \\
		\hline
		\textbf{Semilinear-Parabolic} & \multicolumn{3}{|c|}{{$\fone(\bar{y}) = [\mathbf{D}_{xx}\,y;~0]$}}   & &   \\ 
			& \multicolumn{3}{|c|}{$\ftwo(\bar{y}) = [{1}/{(1+y^2)} + \Phi(\bar{y});~1]$} & $\Wb = J$  &  $\Wb = J^{\{1\}}$  \\ 
			& \multicolumn{3}{|c|}{$J^{\{1\}}(\bar{y}) = \partial \fone/\partial \bar{y}$}   &  &  \\ 
			& \multicolumn{3}{|c|}{ $J^{\{2\}}(\bar{y}) = \partial \ftwo/\partial \bar{y}$} &  &     \\ 
		\hline
		\hline	
		\textbf{Allen-Cahn,  150x150 Grid} & \multicolumn{3}{|c|}{$\fone({y}) = \alpha \cdot \mathbf{D} y$}  & &   \\ 
			& \multicolumn{3}{|c|}{$\ftwo({y}) = \gamma \cdot (y - y^3)$} & $\Wb = J$  &  $\Wb = J^{\{1\}}$  \\ 
			& \multicolumn{3}{|c|}{$J^{\{1\}}({y}) = \partial \fone/\partial {y}$} &  &  \\ 
			& \multicolumn{3}{|c|}{ $J^{\{2\}}({y}) = \partial \ftwo/\partial {y}$}   &  &     \\ 
		
		\hline
		\hline
		\multicolumn{6}{|c|}{\textit{Adaptive Timestep Experiments}} \\ 
		\hline
		\hline
		\textbf{Experiment} & \multicolumn{3}{|c|}{\textbf{PEXPW}}  & \multicolumn{2}{|c|}{\textbf{expirkw3}} \\
		\hline
		\textbf{Allen-Cahn,  300x300 Grid (I, II \& III)} & \multicolumn{3}{|c|}{$\fone({y}) = \alpha \cdot \mathbf{D} y$}  &  \multicolumn{2}{|c|}{} \\ 
			& \multicolumn{3}{|c|}{$\ftwo({y}) = \gamma \cdot (y - y^3)$} & \multicolumn{2}{|c|}{$\Wb = J$}  \\ 
			& \multicolumn{3}{|c|}{$J^{\{1\}}({y}) = \partial \fone/\partial {y}$}   & \multicolumn{2}{|c|}{} \\ 
			& \multicolumn{3}{|c|}{ $J^{\{2\}}({y}) = \partial \ftwo/\partial {y}$}  & \multicolumn{2}{|c|}{} \\ 
		\hline
		\hline
		\textbf{Reversible Gray-Scott on 100x100 Grid} & \multicolumn{3}{|c|}{$\fone([U; V; P]) = \begin{bmatrix}
			D_U \mathbf{D} & 0 & 0\\
			0 & D_V \mathbf{D} & 0\\
			0 & 0 & D_P \mathbf{D}\\
		\end{bmatrix} \begin{bmatrix}
			U \\
			V \\
			P
		\end{bmatrix}$}  &  \multicolumn{2}{|c|}{} \\ 
		& \multicolumn{3}{|c|}{$\ftwo([U; V; P]) = \begin{bmatrix}
			- k_1 U V^2 + f(1 - U) + k_{-1} V^3 \\
			k_1 U V^2 - (f + k_2)V - k_{-1} V^3  + k_{-2} P\\
		    - k_{-2} P - f P \\
		\end{bmatrix}$}  &  \multicolumn{2}{|c|}{$\Wb = J$} \\ 
		& \multicolumn{3}{|c|}{$J^{\{1\}}({[U; V; P]}) = \partial \fone/\partial {[U; V; P]}$}   & \multicolumn{2}{|c|}{} \\ 
		& \multicolumn{3}{|c|}{ $J^{\{2\}}({[U; V; P]}) = \partial \ftwo/\partial {[U; V; P]}$}  & \multicolumn{2}{|c|}{} \\ 
		\hline
	\end{tabular}
	\caption{Splittings of various problems into components. $\mathbf{D}$ is the discretized diffusion operator; $\bar{y} = [y; t]$, is the augmented variable of the autonomous system; $A_{N\times N}$ is a random $N\times N$ matrix; $\textnormal{diag}(.)$ returns the diagonal of the matrix; $F(y) = \fone(y) + \ftwo(y)$ is the right-hand side of the non-split autonomous form of the ODE system; $\Jb$ is the Jacobian of the right-hand side $F(y)$.\label{table:splittings_used_in_partitioned_methods}}
\end{table}
\end{landscape}

%%%%%%%%%%%%%%%%%%%%%%%%%%%%%%%%%%%%%%%

%%%%%%%%%%%%%%%%%%%%%%%%%%%%%%%%%%%%%%%
% !TEX root = Nonstiff_pexpw.tex
%%%%%%%%%%%%%%%%%%%%%%%%%%%%%%%%%%%%%%%
\section{Conclusions}
\label{sec:conclusions}
%%%%%%%%%%%%%%%%%%%%%%%%%%%%%%%%%%%%%%%

Exponential methods developed over the last couple of decades have shown great promise as an alternative to traditional implicit or explicit time discretizations. They can be cheaper than implicit methods, and more stable than explicit methods. 

This work builds partitioned exponential methods for multiphysics systems driven by two simultaneous processes. The new family solves each component process by an exponential integrator, and information between components is exchanged using coupling terms. We consider two approaches to the construction and analysis of these methods, one based on splitting the component functions into linear and nonlinear terms (split-RHS methods), and the other based on approximating the Jacobians of individual components (W-type methods).  Two new formulations of partitioned exponential methods are proposed: PEXPW that generalizes exponential-Rosenbrock schemes, and PEPIRKW that generalizes EPIRK schemes. A third family, PSEPIRK, obtained by averaging two unpartitioned sEPIRK methods, is discussed; this family is only of theoretical interest (from the point of view of order conditions).

In the implementation of the proposed partitioned methods the matrix-exponential-like functions, which are an integral part of exponential integrators, are evaluated on the Jacobians of the individual component functions, whereas these functions are applied to the full (coupled) Jacobian in an unpartitioned method. If the individual Jacobians have computationally favorable structures, as it is often the case, then the computational expense of evaluating matrix-exponential-like functions, which constitute the bulk of the computational cost of exponential time integrators, is greatly reduced. For instance, in reaction-diffusion systems with two or more species, the diffusion Jacobian is block-diagonal, which enables the evaluation of  matrix-exponential-like functions on individual blocks in parallel. Our numerical experiments show that this strategy can lead to partitioned exponential methods that are at least twice as fast as the unpartitioned counterpart.

Partitioned exponential methods share with all splitting methods the drawback of a possibly reduced stability. Roughly speaking, while individual components are treated implicitly (exponentially), the coupling/interaction between components is treated explicitly. Thus the partitioned exponential methods can be valuable when the two components are stiff, but interact weakly with each other. Our numerical tests illustrate that the partitioned methods can exhibit more stable behavior than an unpartitioned method in some stiffness regimes; however, in some very stiff regimes, partitioned methods can fail to obtain a solution, and in such cases unpartitioned methods should be the solver of choice. 

%%%%%%%%%%%%%%%%%%%%%%%%%%%%%%%%%%%%%%%

% Turn on the line below to include the trees PDF file into the 
% document here. Or move it to the appendix after the references.
%\includepdf[landscape=true,pages=-]{TPSTrees_table.pdf}

%%%%%%%%%%%%%%%%%%%%%%%%%%%%%%%%%%%%%%%
\section*{Acknowledgements}
%%%%%%%%%%%%%%%%%%%%%%%%%%%%%%%%%%%%%%%

This work has been supported in part by NSF through awards NSF CCF--1613905, NSF ACI--1709727, by AFOSR through AFOSR DDDAS FA9550--17--1--0015 and by the Computational Science Laboratory at Virginia Tech.

\pagestyle{plain}
%%%%%%%%%%%%%%%%%%%%%%%%%%%%%%%%%%%%%%%
\section*{References}
%%%%%%%%%%%%%%%%%%%%%%%%%%%%%%%%%%%%%%%
%%%%%%%%%%%%%%%%%%%%%%%%%%%%%%%%%%%%%%%

%%%%%%%%%%%%%%%%%%%%%%%%%%%%%%%%%%%%%%%

\clearpage

\appendix
\clearpage
%%%%%%%%%%%%%%%%%%%%%%%%%%%%%%%%%%%%%%%
% !TEX root = Nonstiff_pexpw.tex
%%%%%%%%%%%%%%%%%%%%%%%%%%%%%%%%%%%%%%%
\section{Partitioned sEPIRK method using the averaging (AVG) strategy}
\label{sec:AVG-SEPIRK}
%%%%%%%%%%%%%%%%%%%%%%%%%%%%%%%%%%%%%%%
In order to build higher-order sEPIRK-style partitioned exponential methods we resort to an averaging (AVG) strategy.
For a two-way partition system, one starts with two independent methods derived from the variation-of-constants formula using $\Lone y$ and $\Ltwo y$ as the linear parts, respectively. The internal and final stages of the two methods are then averaged to build a partitioned method with unified stages. The forward difference operator in the resultant method will be computed on the nodes corresponding to the unified stage values. By choosing the abscissae of the methods to be the same, we can avoid interpolation, and ensure we are averaging solutions at the same time instant.

Consider the two sEPIRK methods with the same number of internal stages $s$,  obtained by linearizing one component function at a time and applying the  variation-of-constants formula; the two methods are presented in Formulation \ref{frm:avg_sepirk_method_pairs}.
\begin{formulation}
	\begin{eqnarray*}
		Y_i^{\{1\}} &=& y_n +  {a}_{i,1}^{\{1\}} {\Psi}_{i,1}^{\{1\}}\bigl(h{g}_{i,1}^{\{1\}}\Lone\bigr) \, h F(y_n)  \\
		&& +  \sum_{l=2}^{i} {a}_{i,l}^{\{1\}} {\Psi}_{i,l}^{\{1\}}\bigl(h{g}_{i,l}^{\{1\}}\Lone\bigr)  h\Delta^{(l - 1)}(\None\big(y_n\big) + \ftwo(y_n)), \qquad i = 1, \hdots, s - 1, \nonumber\\
		y_{n+1}^{\{1\}} &=& y_n + {b}_{1}^{\{1\}} {\Psi}_{s,1}^{\{1\}}\bigl(h{g}_{s,1}^{\{1\}}\Lone\bigr)  \, h F(y_n)  \\
		&& +  {\sum_{l=2}^{s} {b}_{l}^{\{1\}} {\Psi}_{{s},l}^{\{1\}}\bigl(h{g}_{{s},l}^{\{1\}}\Lone\bigr)  h\Delta^{(l - 1)}(\None\big(y_n\big) + \ftwo(y_n))}, \nonumber
	\end{eqnarray*}
	\begin{eqnarray*}
		Y_i^{\{2\}} &=& y_n +  {a}_{i,1}^{\{2\}} {\Psi}_{i,1}^{\{2\}}\bigl(h{g}_{i,1}^{\{2\}}\Ltwo\bigr) \, h F(y_n)  \\
		&& +  \sum_{l=2}^{i} {a}_{i,l}^{\{2\}} {\Psi}_{i,l}^{\{2\}}\bigl(h{g}_{i,l}^{\{2\}}\Ltwo\bigr)  h\Delta^{(l - 1)}(\Ntwo\big(y_n\big) + \fone(y_n)), \qquad i = 1, \hdots, s - 1, \nonumber\\
		y_{n+1}^{\{2\}} &=& y_n + {b}_{1}^{\{2\}} {\Psi}_{s,1}^{\{2\}}\bigl(h{g}_{s,1}^{\{2\}}\Ltwo\bigr)  \, h F(y_n)  \\
		&& +  {\sum_{l=2}^{s} {b}_{l}^{\{2\}} {\Psi}_{{s},l}^{\{2\}}\bigl(h{g}_{{s},l}^{\{2\}}\Ltwo\bigr)  h\Delta^{(l - 1)}(\Ntwo\big(y_n\big) + \fone(y_n))}.
	\end{eqnarray*}
	\caption{Pair of sEPIRK methods obtained by linearizing one partition at a time \label{frm:avg_sepirk_method_pairs}}
\end{formulation}

Averaging the stages and solutions of Formulation \ref{frm:avg_sepirk_method_pairs} leads to a new partitioned method  constructed using the AVG strategy. This method, named PSEPIRK, is shown in Formulation \ref{frm:avg_sepirk_method}. As already stated, we propose that the forward differences in the new method be evaluated on nodes corresponding to unified stage values in both the internal and final stages, i.e., while forward differences in the individual methods may be evaluated on their respective internal stage values ($Y_i^{\{1\}}$ and $Y_i^{\{2\}}$), the forward differences in the new method will be evaluated on the unified stage values, $Y_i$. Additionally, we redefine the coefficients in the new method by absorbing the factor $1/2$ that arises from averaging into the coefficients of both the internal and final stages. By using the AVG strategy we end up with an ARK-type method, PSEPIRK.

\begin{formulation}
	\begin{eqnarray*}
		Y_i &=& (Y_i^{\{1\}} + Y_i^{\{2\}})/2\\
		&=& y_n +  {a}_{i,1}^{\{1\}} {\Psi}_{i,1}^{\{1\}}\bigl(h{g}_{i,1}^{\{1\}}\Lone\bigr) \, h F(y_n)  \\
		&& +  \sum_{l=2}^{i} {a}_{i,l}^{\{1\}} {\Psi}_{i,l}^{\{1\}}\bigl(h{g}_{i,l}^{\{1\}}\Lone\bigr)  h\Delta^{(l - 1)}(\None\big(y_n\big) + \ftwo(y_n)) \\
		&& +~{a}_{i,1}^{\{2\}} {\Psi}_{i,1}^{\{2\}}\bigl(h{g}_{i,1}^{\{2\}}\Ltwo\bigr) \, h F(y_n)  \\
		&& +  \sum_{l=2}^{i} {a}_{i,l}^{\{2\}} {\Psi}_{i,l}^{\{2\}}\bigl(h{g}_{i,l}^{\{2\}}\Ltwo\bigr)  h\Delta^{(l - 1)}(\Ntwo\big(y_n\big) + \fone(y_n)) \\
		y_{n+1} &=& (y_{n+1}^{\{1\}} + y_{n+1}^{\{2\}})/2\\
		&=& y_n + {b}_{1}^{\{1\}} {\Psi}_{s,1}^{\{1\}}\bigl(h{g}_{s,1}^{\{1\}}\Lone\bigr)  \, h F(y_n)  \\
		&& +  {\sum_{l=2}^{s} {b}_{l}^{\{1\}} {\Psi}_{{s},l}^{\{1\}}\bigl(h{g}_{{s},l}^{\{1\}}\Lone\bigr)  h\Delta^{(l - 1)}(\None\big(y_n\big) + \ftwo(y_n))} \\
		&& +~{b}_{1}^{\{2\}} {\Psi}_{s,1}^{\{2\}}\bigl(h{g}_{s,1}^{\{2\}}\Ltwo\bigr)  \, h F(y_n)  \\
		&& +  {\sum_{l=2}^{s} {b}_{l}^{\{2\}} {\Psi}_{{s},l}^{\{2\}}\bigl(h{g}_{{s},l}^{\{2\}}\Ltwo\bigr)  h\Delta^{(l - 1)}(\Ntwo\big(y_n\big) + \fone(y_n))}
	\end{eqnarray*}
	\caption{Partitioned sEPIRK Method using AVG strategy (PSEPIRK).}
	 \label{frm:avg_sepirk_method}
\end{formulation}

Order conditions for these methods are given in
\iflong
\ref{sec:order-conditions-3stage} 
\else
{\cite[Appendix E]{narayanamurthi2019} }
\fi
and a three-stage third order method with second order embedded coefficients is given in
\iflong
\ref{sec:coefficients-PSEPIRK}. 
\else
{\cite[Appendix D]{narayanamurthi2019}.} 
\fi
We optimized all the dependent and free coefficients together to ensure that the total norm of the coefficients was minimized while satisfying the family of solutions.

%%%%%%%%%%%%%%%%%%%%%%%%%%%%%%%%%%%%%%%

\clearpage
%%%%%%%%%%%%%%%%%%%%%%%%%%%%%%%%%%%%%%%
\section{Coefficients of PEXPW methods} 
\label{sec:coefficients-PEXPW}
\begin{method}
\footnotesize
\begin{equation*}
\begin{split}
A = % [inline block 0: 71 envs, 50427 chars -> data_tex | \begin{bmatrix} A_{11} & A_{12} \\[8pt]...]


\begin{landscape}
	\subsection{PSEPIRK Order Conditions}
	\input{psepirk_oc}
\end{landscape}

\pagestyle{empty}
\newgeometry{bindingoffset=0.2in,%
left=0.5in,right=0.5in,top=0.5in,bottom=0.5in,%
footskip=.25in}

\begin{landscape}
%%%%%%%%%%%%%%%%%%%%%%%%%%%%%%%%%%%%%%%
\section{TPS-trees and the Corresponding B-Series}
\label{sec:TPS}
%%%%%%%%%%%%%%%%%%%%%%%%%%%%%%%%%%%%%%%
	\setcounter{table}{0}
	\renewcommand{\thetable}{F\arabic{table}}
	\begin{table}[!htb]
\caption{TPS-trees up to order three (part one of eleven).\label{table:tps_trees1}}
\label{Table:PWtrees1} \centering 
\global\long\def\arraystretch{1.5}%
{\footnotesize{}{}}%
% [inline block 1: 11 envs, 107487 chars -> data_tex | \begin{tabular}{|>{\centering\arraybackslash}m{1.1in}|>{\centering\arraybackslash}m{1.3in}|>{\centering\arraybackslash}m...]

\end{table}
\end{landscape}
\restoregeometry
\fi

\end{document}